\author[Cubides Kovacsics]{Pablo {Cubides Kovacsics}}
\address{Mathematisches Institut der Heinrich-Heine-Universit\"at D\"usseldorf, 
Universit\"atsstr. 1, 40225 D\"usseldorf, Germany. }
\email{cubidesk@hhu.de}
\author[Point]{Fran\c coise {Point}$^{(\dagger)}$}
\address{Department of Mathematics (De Vinci)\\ UMons\\ 20, place du Parc 7000 Mons, Belgium}
\email{point@math.univ-paris-diderot.fr}
\title[Topological fields with a generic derivation]{Topological fields with a generic derivation}
\newtheorem*{theorem*}{Theorem}
\newtheorem*{question*}{Question}
\newtheorem*{corollary*}{Corollary}
\numberwithin{equation}{subsection}
\newtheorem{theorem}[equation]{Theorem}
\newtheorem{lemma}[equation]{Lemma}
\newtheorem{fact}[equation]{Fact}
\newtheorem{claim}[equation]{Claim}
\newtheorem{proposition}[equation]{Proposition}
\newtheorem{corollary}[equation]{Corollary}
\theoremstyle{definition}
\newtheorem{definition}[theorem]{Definition}
\newtheorem{lemma-definition}[equation]{Lemma-Definition}
\newtheorem{remark}[equation]{Remark}
\newtheorem{examples}[equation]{Examples}
\newtheorem{question}[equation]{Question}
\newtheorem{notation}[equation]{Notation}
\newcommand{\G}{\mathcal G}
\newcommand{\R}{\mathbb R}
\newcommand{\Q}{\mathbb Q}
\newcommand{\N}{\mathbb N}
\newcommand{\Z}{\mathbb Z}
\newcommand{\cF}{\mathcal F}
\newcommand{\cM}{\mathcal M}
\newcommand{\bU}{\mathbb{U}}
\newcommand{\cL}{\mathcal{L}}
\newcommand{\cK}{\mathcal{K}}
\newcommand{\Div}{\mathrm{div}}
\newcommand{\Dim}{\mathrm{dim}}
\newcommand{\eq}{\mathrm{eq}}
\newcommand{\acl}{\mathrm{acl}}
\newcommand{\alg}{\mathrm{alg}}
\newcommand{\NIP}{\mathrm{NIP}}
\newcommand{\RCF}{\mathrm{RCF}}
\newcommand{\ACVF}{\mathrm{ACVF}}
\newcommand{\RCVF}{\mathrm{RCVF}}
\newcommand{\RV}{\mathrm{RV}}
\newcommand{\PCF}{p\mathrm{CF}}
\newcommand{\CODF}{\mathrm{CODF}}
\newcommand{\Th}{\mathrm{Th}}
\newcommand{\Graph}{\mathrm{graph}}
\newcommand{\Mor}{\mathrm{Mor}}
\newcommand{\lex}{\mathrm{lex}}
\newcommand{\id}{\mathrm{id}}
\newcommand{\J}{\nabla}
\newcommand{\cA}{\mathcal{A}}
\newcommand{\cB}{\mathcal{B}}
\newcommand{\cZ}{\mathcal{Z}}
\newcommand{\ord}{\mathrm{ord}}
\newcommand{\Int}{\mathrm{Int}}
\newcommand{\bd}{\bar{\delta}}
\newcommand{\bS}{\mathbf{S}}
\newcommand{\bF}{\mathbf{F}}
\begin{document}
%\linenumbers 

\subjclass[2010]{Primary 12L12, 12J25, 12H05; Secondary 13N15, 03C60}
\keywords{Topological fields, differential fields, generic derivations, elimination of imaginaries, open core}
\thanks{($\dagger$) Research director at the Fonds National de la Recherche Scientifique (FNRS-FRS)}

\begin{abstract} We study a class of tame $\cL$-theories $T$ of topological fields and their $\cL_\delta$-extension $T_{\delta}^*$ by a generic derivation $\delta$. The topological fields under consideration include henselian valued fields of characteristic 0 and real closed fields. We show that the associated expansion by a generic derivation has $\cL$-open core (i.e., every $\cL_\delta$-definable open set is $\cL$-definable) and derive both a cell decomposition theorem and a transfer result of elimination of imaginaries. Other tame properties of $T$ such as relative elimination of field sort quantifiers, $\NIP$ and distality also transfer to $T_\delta^*$. As an application, we derive consequences for the corresponding theories of dense pairs. In particular, we show that the theory of pairs of real closed fields (resp. of $p$-adically closed fields and real closed valued fields) admits a distal expansion. This gives a partial answer to a question of P. Simon.  
\end{abstract}

\maketitle

\setcounter{tocdepth}{1}

{
  \hypersetup{linkcolor=black}
  \tableofcontents
}

\section*{Introduction}

The study of topological fields with a derivation has been traditionally divided in two main branches. The first branch, as studied (in chronological order) in \cite{R1980, Scanlon, ADH, rideau}, treats the case where some compatibility between the derivation and the topology is assumed (\emph{e.g.}, continuity). The second branch, as studied in \cite{singer1978, Tressl, guzy-point2010, point2011, GP12, fornasiero-kaplan}, deals with the case where no such compatibility is required but rather a \emph{generic} behaviour of the derivation occurs. An example of such a generic behaviour arises in existentially closed ordered differential fields, a class studied and axiomatized by M. Singer in \cite{singer1978}. Each branch seems to tackle different aspects of differential fields and has its own applications. 

The purpose of this article is to further develop the study of generic derivations and show that many tame properties of theories of topological fields transfer to their expansions by such derivations. Examples of the topological fields under consideration include real closed fields and henselian valued fields of characteristic 0. We adopt a uniform treatment and development of such topological fields in the spirit of L. Mathews \cite{M} and A. Pillay \cite{pillay87}, which we consider interesting on its own. As an application of generic derivations, we derive consequences for the corresponding theories of dense pairs of topological fields (as studied in \cite{Robinson, Macintyre, Dries1998, berenstein-vassiliev2010, F}, to mention a few), supporting the idea that this framework is a useful tool to study such pairs of structures. 

The following section gathers a detailed overview of our main results.

\section*{Main results}\label{sec:mainresults}

In the first section of this article we introduce the theories of topological fields we will be concerned with, which we call \emph{open theories of topological fields}. Informally, an open theory of topological fields is a first order topological theory of fields in the sense of A. Pillay \cite{pillay87} (i.e., the topology is uniformly definable) in which definable sets are finite boolean combinations of Zariski closed sets and open sets. However, in contrast to \cite{pillay87}, our setting explicitly allows multi-sorted languages. Examples include complete theories of henselian valued fields of characteristic 0 and the theory of real closed fields. The formal definition will be given in Section \ref{sec:topopen}. 

The main focus of the present article is to study expansions of open theories of topological fields by \emph{generic} derivations. Let $T$ be an open $\cL$-theory of topological fields and $T_\delta$ be the theory $T$ together with axioms stating that $\delta$ is a derivation on the field sort (in the extension $\cL_{\delta}$ of $\cL$ by $\delta$). We define an $\cL_\delta$-extension $T_\delta^*$ of $T_\delta$ informally as follows. Models of $T_\delta^*$  satisfy that, for any unary differential polynomial $P$, if the ordinary polynomial associated with $P$ has a regular solution $a$, then one can find differential solutions of $P$ arbitrarily close to $a$. A derivation $\delta$ on a model $K$ of $T$ is \emph{generic} if $(K,\delta)$ is a model of $T_\delta^*$. 

When $T$ is the theory of real closed fields, the theory $T_\delta^*$ corresponds to the theory of closed ordered differential fields $\CODF$ as originally introduced and axiomatized by M. Singer in \cite{singer1978}. The idea behind $\CODF$ has been generalized to many different contexts including work by M. Tressl \cite{Tressl} and N. Solancki \cite{solanki} in the framework of large fields, and by N. Guzy and the second author in \cite{guzy-point2010,GP12}. As in \cite{guzy-point2010,GP12}, we will closely follow Singer's original axiomatization. The main difference in the present setting with respect to previous work is the explicit allowance of multi-sorted languages (which permits us to include theories in languages where they admit relative quantifier elimination).

How much complexity is introduced by a generic derivation? Two of our main contributions show that the derivation introduces no new complexity concerning both open definable sets and imaginaries. 

\begin{theorem*}[Later Theorem \ref{thm:opencoregen} and Corollary \ref{cor:opencore}] Let $T$ be an open $\cL$-theory of topological fields. Then, the theory $T_\delta^*$ has $\cL$-open core (i.e., every open $\cL_\delta$-definable set is already $\cL$-definable). In particular, when $T$ is either $\ACVF_{0,p}$ (with $p\geqslant 0$), $\RCVF$, $\PCF$ or, in general, the $\cL_\Div$-theory of a henselian valued field of characteristic 0, then, the theory $T_\delta^*$ has $\cL$-open core.
\end{theorem*}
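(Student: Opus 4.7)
The plan is to combine a reduction of $\cL_\delta$-definable sets to $\cL$-definable sets pulled back through the prolongation map with a density statement arising from the genericity of $\delta$, in the spirit of the open core argument for $\CODF$.

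The first ingredient is a reduction lemma: every $\cL_\delta$-definable subset $X \subseteq K^m$ of a model $(K, \delta) \models T_\delta^*$ is of the form $X = \J_n^{-1}(Y) := \{a \in K^m : \J_n(a) \in Y\}$ for some $n \geq 0$ and some $\cL$-definable $Y \subseteq K^{m(n+1)}$, where $\J_n(a) := (a, \delta a, \ldots, \delta^n a)$ denotes the $n$-th prolongation. This relative quantifier elimination for $T_\delta^*$ over $T$ follows from the genericity axiomatization together with the cell decomposition enjoyed by the open theory $T$. The second ingredient is a density lemma: for any open $U \subseteq K^m$, the image $\J_n(U)$ is dense in $U \times K^{mn}$. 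Consequently, the preimage $\J_n^{-1}(N)$ of any $\cL$-definable nowhere dense $N \subseteq K^{m(n+1)}$ is nowhere dense in $K^m$, since otherwise an open subset $W$ of $\J_n^{-1}(N)$ would satisfy $\J_n(W) \subseteq N$ yet be dense in $W \times K^{mn}$.

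Given an open set $X = \J_n^{-1}(Y)$, the natural $\cL$-definable candidate for $X$ is
$$
Z := \mathrm{int}\bigl(\{a \in K^m : \{a\} \times K^{mn} \subseteq \overline{Y}\}\bigr),
$$
which is $\cL$-definable via the uniform definability of closure and interior in $T$. The inclusion $X \subseteq Z$ follows from the density lemma applied to any open $U \subseteq X$, yielding $\J_n(U) \subseteq Y$ and hence $U \times K^{mn} \subseteq \overline{Y}$. The discrepancy $Z \setminus X$ is contained in $\J_n^{-1}(\overline{Y} \setminus Y)$, which is nowhere dense by the density lemma. A cell-decomposition argument inside $Z$ applied to the $\cL$-definable boundary $\overline{Y} \setminus Y$ is then used to iteratively refine $Z$ so as to match $X$ exactly.

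The main obstacle I anticipate is this final refinement step: carefully identifying and removing the $\cL$-definable ``algebraic exceptional'' contributions to $Z \setminus X$---those coming from points $a$ where $\J_n(a)$ is forced onto the boundary $\partial Y$, for instance at elements in the constants of $\delta$---so as to obtain an exact $\cL$-definable match for $X$. This requires exploiting the cell decomposition of $T$ in an essential way. The reduction step is also delicate in the multi-sorted setting, where the language $\cL$ involves auxiliary sorts such as residue fields and value groups.
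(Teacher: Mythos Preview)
Your overall setup is sound: the relative quantifier elimination for $T_\delta^*$ does give the reduction $X = \J_n^{-1}(Y)$, your density lemma is correct (it is essentially Lemma~\ref{fact:density}), and the candidate $Z$ you propose does satisfy $X \subseteq Z$ with $Z \setminus X$ of empty interior. But the refinement step you flag as the main obstacle is a genuine gap, and it is not clear how to close it along the lines you suggest. The difficulty is that $Z \setminus X$ is only known to be $\cL_\delta$-definable, not $\cL$-definable; noting that it is contained in $\J_n^{-1}(\overline{Y}\setminus Y)$ does not help, since that preimage is again just an $\cL_\delta$-definable subset of $K^m$. Passing to cells of $\overline{Y}\setminus Y$ does not set up a workable induction either: the task of showing that $\J_n^{-1}$ of a lower-dimensional $\cL$-definable set meets $Z$ in an $\cL$-definable set is the same kind of problem you started with, now for a non-open set, and there is no evident well-founded quantity to induct on.

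The paper sidesteps this circularity by reformulating the target. Rather than matching the open set $X$ directly, it shows that \emph{every} $\cL_\delta$-definable $X\subseteq K^{n+1}$ (open or not) admits a \emph{linked triple} $(X,Z,m)$: an $\cL$-definable $Z$ with $X=\J_m^{-1}(Z)$ and $\overline{Z}=\overline{\J_m(X)}$. Existence of linked triples for all $X$ is equivalent to $\cL$-open core (Proposition~\ref{thm:fermeture}). The proof is by induction on the number of coordinates $n$. The inductive step combines two ingredients: a \emph{parametric} strengthening of your density lemma (Lemma~\ref{lem:envelop}), which replaces $Y$ by an $\cL$-definable set whose fibers over genuine prolongations $\bd^d(a)$ are already saturated by differential points in the last block of coordinates; and the linked triple for the projection $\pi(X)\subseteq K^n$ supplied by induction. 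These are glued using the cell decomposition of $T$ via the lifting Lemma~\ref{lem:Ldens}. The crucial shift from your approach is that the induction runs on the ambient dimension $n$ rather than on $\dim(\overline Y\setminus Y)$, and the object being controlled is the closure $\overline{\J_m(X)}$ in the prolongation space rather than $X$ itself.
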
      
     
\begin{theorem*}[Later Theorem \ref{thm:EI}] Let $T$ be an open $\cL$-theory of topological fields. Let $\G$ be a collection of sorts of $\cL^{eq}$ and $\cL^{\G}$ denote the restriction of $\cL^{eq}$ to the sorts in $\G$. 
Suppose that $T$ admits elimination of imaginaries in $\cL^\G$. Then the theory $T^*_{\delta}$ admits elimination of imaginaries in $\cL_{\delta}^\G$. 
\end{theorem*}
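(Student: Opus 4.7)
The plan is to reduce elimination of imaginaries in $T_\delta^*$ to that of $T$ via the prolongation maps $\nabla_m \colon x \mapsto (x, \delta x, \ldots, \delta^m x)$. The strategy has two main ingredients: first, establish that every $\cL_\delta$-definable set arises, in a controlled way, as the $\nabla_m$-preimage of an $\cL$-definable set in a suitable jet space; second, transport the $\cL^\G$-canonical code of the latter (supplied by EI in $T$) to an $\cL_\delta^\G$-canonical code of the former.

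For the first ingredient, I would prove a relative quantifier elimination statement: every $\cL_\delta$-formula $\phi(x; a)$ is equivalent modulo $T_\delta^*$ to an $\cL$-formula $\psi(\nabla_m(x); \nabla_k(a))$ for suitable $m, k$. The genericity axioms of $T_\delta^*$, together with the $\cL$-open core theorem and the cell decomposition theorem of the paper, should yield that $\cL_\delta$-types over a parameter set $A$ are determined by the $\cL$-types of the prolongations $(\nabla_m(b))_{m\in\N}$ over $A$; a standard compactness argument then upgrades this type-level statement to the formula-level equivalence above. For the second ingredient, given $X = \phi(K; a)$ and its associated $\cL$-definable set $Z = \psi(K^{n(m+1)}; \nabla_k(a))$, apply EI in $T$ to obtain an $\cL^\G$-canonical code $e_Z$ of $Z$. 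Any $\cL_\delta$-automorphism $\sigma$ of an ambient monster model commutes with $\nabla_m$ and $\nabla_k$, so $\sigma(X)=X$ iff $\sigma(Z)=Z$ iff $\sigma(e_Z)=e_Z$; hence $e_Z$ serves as a canonical code for $X$ in $\cL_\delta^\G$.

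The main obstacle is in the first ingredient and concerns canonicity: since $m$ and $k$ depend on $\phi$, to obtain a well-defined code one must either fix them to minimal values invariantly attached to $X$, or verify that different choices produce $\cL_\delta^\G$-interdefinable codes, using the injectivity and $\cL_\delta$-definability of the prolongation maps. A related subtlety is uniformity in the parameters: the reduction to jets must be done so that a definable family of $\cL_\delta$-sets parametrized by $a$ corresponds to a definable family of $\cL$-sets parametrized by $\nabla_k(a)$, so that the EI-step in $T$ can be applied uniformly and produces a code that depends $\cL_\delta$-definably on $a$.
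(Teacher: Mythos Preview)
Your second ingredient contains a genuine gap. You assert that for an $\cL_\delta$-automorphism $\sigma$ one has $\sigma(X)=X \iff \sigma(Z)=Z$. The implication $\Leftarrow$ is fine, since $\sigma$ commutes with $\nabla_m$ and $X=\nabla_m^{-1}(Z)$. But $\Rightarrow$ fails: from $\sigma(X)=X$ you only obtain $\nabla_m^{-1}(\sigma(Z))=\nabla_m^{-1}(Z)$, and because $\nabla_m\colon K^n\to K^{n(m+1)}$ is far from surjective, this does not force $\sigma(Z)=Z$. Concretely, many distinct $\cL$-definable sets $Z$ share the same $\nabla_m$-preimage $X$ (they may differ arbitrarily off the image of $\nabla_m$), so a code for any particular choice of $Z$ need not be fixed by automorphisms fixing $X$. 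The canonicity obstacle you flag is real, but it is not about the integers $m,k$; it is about $Z$ itself. Fixing $m=o(X)$ does not help, and ``interdefinability of different choices'' is exactly what fails.

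The paper's proof resolves this by replacing the arbitrary $Z$ with an $\cL$-definable set \emph{canonically} attached to $X$: by the open core theorem, the closure $\overline{\nabla_{o(X)}(X)}$ is $\cL$-definable, and it is automatically $\sigma$-invariant whenever $X$ is. The price is that this closure does not recover $X$; pulling back one only gets $\widetilde{X}\coloneqq\nabla_{o(X)}^{-1}\bigl(\overline{\nabla_{o(X)}(X)}\bigr)\supseteq X$. The paper then runs an induction on $\dim\bigl(\overline{\nabla_{o(X)}(X)}\bigr)$: using the linked-triple machinery and the dimension inequality $\dim(\overline{Z}\setminus Z)<\dim(\overline{Z})$, one shows $\dim\bigl(\overline{\nabla_{o(X)}(\widetilde{X}\setminus X)}\bigr)<\dim\bigl(\overline{\nabla_{o(X)}(X)}\bigr)$, so by induction $\widetilde{X}\setminus X$ has a code $e_1$, and together with an $\cL^\G$-code $e_2$ for $\overline{\nabla_{o(X)}(X)}$ the pair $(e_1,e_2)$ codes $X$.
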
  

The previous theorem yields another proof of the fact that $\CODF$ has elimination of imaginaries (the first proof was given by the second author in \cite{point2011}, a second proof was given in \cite{BCP}). As a corollary we also obtain the following.

\begin{corollary*}[Later Corollary \ref{cor:EI_examples}] Let $\cL^\G$ denote the geometric language of valued fields as defined in \cite{HHM2006}. The theories $(\ACVF_{0,p})_\delta^*$, $\RCVF_\delta^*$ and $\PCF_\delta^*$ have elimination of imaginaries in $\cL_\delta^\G$. 
\end{corollary*}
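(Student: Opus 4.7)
The plan is essentially to invoke Theorem \ref{thm:EI} directly, once we verify its hypotheses for each of the three base theories. Concretely, given an open $\cL$-theory of topological fields $T$ which eliminates imaginaries in some geometric restriction $\cL^\G$ of $\cL^{\eq}$, Theorem \ref{thm:EI} yields that $T_\delta^*$ eliminates imaginaries in $\cL_\delta^\G$. So the task reduces to (i) checking that $\ACVF_{0,p}$, $\RCVF$, and $\PCF$ fall into the class of open theories of topological fields introduced earlier in the paper, and (ii) collecting the known elimination of imaginaries results for each of these theories in their geometric languages.

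Point (i) has already been addressed in the first section, since these theories appear explicitly as examples of open theories of topological fields; no further work is required here. For point (ii), the plan is to cite the following results: for $\ACVF_{0,p}$, the theorem of Haskell--Hrushovski--Macpherson \cite{HHM2006} that elimination of imaginaries holds in the geometric sorts $\G = \{K, S_n, T_n : n \geq 1\}$; for $\RCVF$, the extension of this result to real closed valued fields (by Mellor, and building on \cite{HHM2006}), which gives elimination of imaginaries in the same geometric sorts $\G$ expanded by the order on the field sort; for $\PCF$, the theorem of Hrushovski--Martin--Rideau that $p$-adically closed fields eliminate imaginaries in the geometric sorts (see the discussion and references given in the paper).

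With each of these three EI results in hand, one simply feeds $T \in \{\ACVF_{0,p}, \RCVF, \PCF\}$ and the corresponding geometric language $\cL^\G$ into Theorem \ref{thm:EI}. The conclusion is that the $\cL_\delta^\G$-theory $T_\delta^*$ eliminates imaginaries, as desired.

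There is essentially no genuine obstacle in the argument, since the corollary is a pure transfer from Theorem \ref{thm:EI}; the only substantive point is ensuring that the precise formulation of the geometric language used in each cited EI result matches the class of languages allowed as input to Theorem \ref{thm:EI} (in particular, that the geometric sorts are a collection of sorts of $\cL^{\eq}$ and that $T$ interprets them in a way compatible with the open-topological-field framework). This verification is straightforward in each case: the geometric sorts are quotients of Cartesian powers of the field sort by definable equivalence relations, hence belong to $\cL^{\eq}$, and the open topology on the valued field sort is precisely the one used to set up $T$ as an open theory of topological fields.
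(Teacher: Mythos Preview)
Your proposal is correct and matches the paper's own proof essentially line for line: cite the known elimination of imaginaries results (Haskell--Hrushovski--Macpherson for $\ACVF_{0,p}$, Mellor for $\RCVF$, Hrushovski--Martin--Rideau for $\PCF$), note that each theory is an open $\cL$-theory of topological fields, and apply Theorem \ref{thm:EI}. The only cosmetic difference is that the paper also explicitly invokes Corollary \ref{cor:opencore} (the $\cL$-open core), though this is already absorbed into the proof of Theorem \ref{thm:EI} and so is not strictly an additional hypothesis to verify.
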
 

It is worth mentioning that the $\cL$-open core plays a crucial role to show the transfer of elimination of imaginaries. 

In return, the proof of the $\cL$-open core has two main ingredients. The first ingredient is a cell decomposition theorem for definable subsets of models of $T$ (see later Theorem \ref{thm:newCD}) in the spirit of results by P. Simon and E. Walsberg \cite{simon-walsberg2016} for dp-minimal topological structures. As in \cite{simon-walsberg2016}, cells are graphs of continuous correspondences (``multi-valued functions'') but, in addition, our definition of cell ensures that the projection of a cell (onto an initial subset of coordinates) is again a cell (see Definition \ref{def:cells}). The second ingredient is a parametric version of the density of differential elements in open sets (see Lemmas \ref{fact:density}, \ref{lem:envelop}).

Building on the notion of cell given for definable sets of models of $T$, we introduce a notion of cells for definable sets of models of $T_\delta^*$ which we call $\delta$-cells (see Definition \ref{def:delta-cell}). Informally, $X$ is a $\delta$-cell if there is an $\cL$-definable cell $Y$ such that $X$ is the pullback under (iterations of) $\delta$ of $Y$ and the differential prolongation of $X$ is dense in $Y$. Using both the $\cL$-open core and cell decomposition (in $T$), we prove a $\delta$-cell decomposition theorem for $T_\delta^*$. In the case of $\CODF$, it improves results by T. Brihaye, C. Michaux and C. Rivi\`ere in \cite{BMR} (see Remark \ref{rem:delta-CD}). 

\begin{theorem*}[Later Theorem \ref{delta-cell-decom}] Let $T$ be an open $\cL$-theory of topological fields and $K$ be a model of $T_\delta^*$. Every $\cL_\delta$-definable subset $X\subseteq K^n$ can be partitioned into finitely many $\delta$-cells. 
\end{theorem*}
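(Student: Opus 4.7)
The plan is to reduce to the case where $X$ is the inverse image of an $\cL$-definable set under a prolongation map, apply the $\cL$-cell decomposition theorem for $T$ (Theorem \ref{thm:newCD}), and use the $\cL$-open core to shrink the ambient $\cL$-cell whenever the prolongation fails to be dense, inducting on topological dimension.

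Using the relative elimination of field-sort quantifiers for $T_\delta^*$ announced earlier in the introduction, I would first write $X$ as $\nabla_r^{-1}(Y)$ for some $r \geq 0$ and some $\cL$-definable set $Y \subseteq K^{n(r+1)}$, where $\nabla_r(x) = (x, \delta x, \ldots, \delta^r x)$ denotes the $r$-th prolongation. Applying the $\cL$-cell decomposition theorem to $Y$ produces a partition into $\cL$-cells $Y_1, \ldots, Y_m$, and hence a partition $X = X_1 \sqcup \cdots \sqcup X_m$ with $X_i := \nabla_r^{-1}(Y_i)$. For each $X_i$ I would test whether $\nabla_r(X_i)$ is dense in $Y_i$: if so, $X_i$ is already a $\delta$-cell. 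Otherwise, replace $Y_i$ by $Y_i' := Y_i \cap \overline{\nabla_r(X_i)}$. The crucial observation is that $Y_i'$ is $\cL$-definable: the complement of $\overline{\nabla_r(X_i)}$ is the largest open set disjoint from $\nabla_r(X_i)$, hence is $\cL_\delta$-definable and open, and therefore $\cL$-definable by the open core (Theorem \ref{thm:opencoregen}). By construction $\nabla_r^{-1}(Y_i') = X_i$, and since $\nabla_r(X_i)$ is not dense in $Y_i$ the set $Y_i'$ is a closed proper subset of the $\cL$-cell $Y_i$, so $\dim Y_i' < \dim Y_i$. Decomposing $Y_i'$ into $\cL$-cells and iterating this step, the procedure terminates by induction on $\dim Y_i$ and yields a finite partition of $X$ into $\delta$-cells.

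The main obstacle is the dimension-drop step, which depends on two ingredients packaged in earlier sections of the paper: a well-behaved topological dimension for $\cL$-definable sets (extracted from cell decomposition) and the fact that the topological closure sends $\cL_\delta$-definable sets to $\cL$-definable sets (a direct consequence of the open core). Once these are in hand, the proof is an organisation of them into a terminating induction that tightens each ambient cell until the density requirement in the definition of $\delta$-cell is met. Verifying carefully that the final pieces genuinely satisfy the definition of $\delta$-cell, and that the induction terminates uniformly on the original set $X$, are the main technicalities, but neither should require substantial new ideas beyond those provided by Theorems \ref{thm:newCD} and \ref{thm:opencoregen}.
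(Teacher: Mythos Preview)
Your overall strategy—pull back an $\cL$-cell decomposition through $\nabla_r$, then tighten each cell using the $\cL$-open core—is the right idea and is close to the paper's. But the termination argument has a genuine gap.

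You assert that if $\nabla_r(X_i)$ is not dense in the cell $Y_i$, then $Y_i' \coloneqq Y_i \cap \overline{\nabla_r(X_i)}$, being a proper relatively closed subset of $Y_i$, must satisfy $\dim Y_i' < \dim Y_i$. This inference is invalid: a proper relatively closed subset of a cell need not have smaller dimension. Concretely, in $\CODF$ with $r=2$, take the $1$-dimensional cell
\[
Y_i=\{(t,t^2,2t^3): t\in(0,2)\}\cup\{(t,t^2,t^3): t\in(2,4)\}.
\]
Then $X_i=\nabla_2^{-1}(Y_i)=\{a\in(0,2):\delta(a)=a^2\}$ (the second branch forces $2a^3=a^3$, hence is empty), so $\overline{\nabla_2(X_i)}=\{(t,t^2,2t^3):t\in[0,2]\}$ and $Y_i'=\{(t,t^2,2t^3):t\in(0,2)\}$ has dimension $1=\dim Y_i$. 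Your induction on $\dim Y_i$ therefore does not advance.

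What is true is that after one replacement you have a \emph{linked triple} $(X_i,Y_i',r)$ (this is exactly the content of Proposition~\ref{thm:fermeture}). The paper's proof starts from a linked triple and then proves the non-trivial fact that in the cell decomposition of $Y_i'$, every cell of \emph{maximal} dimension automatically satisfies the density condition (Claim~\ref{claim:density_new} and the paragraph following it, which use both the linked-triple hypothesis and the continuous-correspondence structure of the cell). Only the strictly lower-dimensional cells require further work, which is what drives the induction. Your argument is missing precisely this step; without it there is no reason the iteration terminates, and supplying it is the substantive part of the proof.
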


Last but not least, we illustrate how the theory $T_{\delta}^*$ provides a useful setting to study the theory $T_P$ of dense pairs of models of a one-sorted open $\cL$-theory of topological fields $T$. We show that $T_{P}$ has $\cL$-open core (Theorem \ref{thm:opencorepairs}) and deduce that $T_P$ admits a distal expansion (namely $T_\delta^*$) whenever $T$ is a distal theory (Corollary \ref{cor:dist_expansion}). In particular, we show that $T_P$ admits a distal expansion when $T$ is $\RCF$, $\PCF$ and $\RCVF$. Our result gives a positive answer to a particular case of a question of P. Simon who asked if the theory of dense pairs of an o-minimal structure (extending a group) has a distal expansion (see \cite{nell2018} for a discussion). T. Nell provided a positive answer in the case of ordered vector fields \cite{nell2018}. Our result extends to pairs of real closed fields. Recently, A. Fornasiero and E. Kaplan \cite{fornasiero-kaplan} extended this result to other o-minimal expansions of real closed fields. 

\medskip

The paper is laid out as follows. Open theories of topological fields are studied in Section \ref{sec:topopen}. Correspondences are studied in \eqref{sec:correspon} and the cell decomposition theorem is presented in \eqref{sec:celldecomp}. Topological fields with a generic derivation are introduced in Section \ref{sec:dpmingen}: consistency results are presented in \eqref{sec:consistency}; relative quantifier elimination is given in \eqref{sec:relQE}.
Section \ref{sec:OP-EI} is devoted to the open core property (\ref{sec:opencore}), the $\delta$-cell decomposition theorem (\ref{sec:delta-cell-decomp}), and the transfer of elimination of imaginaries (\ref{sec:EI}). The applications to dense pairs are presented in Section \ref{sec:app}.  Some transfer proofs which were known in the one-sorted case (such as the transfer of $\NIP$ and distality) are gathered in Appendix \ref{appendix}, together with the elimination of the quantifier $\exists^{\infty}$. 

The reader eager of differential algebra may take the following fast track reading of Section \ref{sec:topopen}, before starting with Section \ref{sec:dpmingen}: read the definition of open $\cL$-theories of topological fields in Section \ref{sec:opentheories} and have a quick look at the cell decomposition theorem given for open $\cL$-theories of topological fields (Theorem \ref{thm:newCD}) in Section \ref{sec:celldecomp}.

\section{Open expansions of topological fields}\label{sec:topopen}  

\subsection{Preliminaries}\label{sec:prelim}

\subsubsection{Model theory} We follow standard model theoretic notation and terminology. Lower-case letters like $a, b, c$ and  $x,y,z$ usually denote finite tuples and we let $\ell(x)$ denote the length of $x$. We sometimes use $\bar{x}$ to denote a tuple $\bar{x}=(x_1,\ldots, x_n)$ where each $x_i$ is a tuple. Let $\cL$ be a possibly multi-sorted language and $\cM$ be an $\cL$-structure. For $A\subseteq \cM$ (possibly ranging over different sorts), we let $\langle A\rangle_\cL$ denote the $\cL$-substructure generated  by $A$ in $\cM$. Sorts are in general denoted by bold letters like $\bS$, and $\bS(\cM)$ denotes the $\bS$-points in $\cM$. For a tuple of variables $x$ (again possibly ranging over different sorts), we let $\cM^x$ denote the corresponding product of sorts in $\cM$. For an $\cL$-formula $\varphi(x)$ we let $\varphi(\cM)$ denote the set 
\[
\{a\in \cM^x : \cM\models \varphi(a)\}.
\]
We let $\cL(A)$ denote the extension of $\cL$ by constants for all elements in $A$. By an $\cL$-definable set of $\cM$ we mean definable with parameters, that is, of the form $\varphi(\cM)$ for an $\cL(\cM)$-formula $\varphi$. 

For a subset $X\subseteq Q\times R$ and for $a\in Q$, the fiber of $X$ over $a$ is denoted by $X_{a}\coloneqq\{b\in R : (a, b)\in X\}$. 

We let $\acl$ denote the model-theoretic algebraic closure operator. Given a sort $\bS$ in $\cL$, we let $\acl_\bS$ denote the model-theoretic algebraic closure restricted to $\bS$. Note that $\acl_\bS$ is again a closure operator. 

\subsubsection{Topological fields}\label{sec:topfields}

Throughout this article, every topological field is assumed to be non-discrete and Hausdorff. 

Let $K$ be a field and $\tau$ be a topology on $K$ making it into a topological field. On $K^n$, $n\geq 1$, we put the product topology. The topological closure of a set $X\subseteq K^n$ is denoted by $\overline{X}$ and its interior by $\Int(X)$. The topological dimension of a non-empty subset $X\subseteq K^n$, denoted $\Dim(X)$, is defined as the maximal $\ell\leqslant n$ such that there is a projection $\pi\colon K^n\rightarrow K^{\ell}$ such that $\Int(\pi(X))\neq \emptyset$ (and equal to $-1$ if $X=\emptyset$).

We let $\cL_{\mathrm{ring}}$ denote the language of rings $\{\cdot,+,-,0,1\}$ and $\cL_{\mathrm{field}}\coloneqq\cL_{\mathrm{ring}}\cup\{{\,}^{-1}\}$ denote the language of fields. We treat every field as an $\cL_{\mathrm{field}}$-structure by extending the multiplicative inverse to 0 by $0^{-1}=0$. We let $\cL_{\mathrm{field}}^\Omega$ be the extension of $\cL_{\mathrm{field}}$ by a set $\Omega$ of additional constant symbols (allowing the trivial case $\Omega=\emptyset$). 

For the rest of the article, unless otherwise stated, we work in a (possibly multi-sorted) language $\cL$ extending $\cL_{\mathrm{field}}^\Omega$. We distinguish the field sort and denote it by $\bF$. Any other sort is called an \emph{auxiliary sort}. 

\begin{notation}\label{nota:sorts} Let $\cK$ be an $\cL$-structure. Unconventionally, and in order to simplify notation, we use non-calligraphic $K$ to denote $\bF(\cK)$ (so $K$ is \emph{not} the underlying universe of $\cK$ but only the universe of the field sort of $\cK$).  
\end{notation}

\begin{definition}\label{def:deftop}
We say $\tau$ is an \emph{$\cL$-definable field topology} if there is an $\cL$-formula $\chi_\tau(x,z)$ with $x$ a single $\bF$-variable such that $\{ \chi_\tau(\cK,a): a\in \cK^z\}$ is a basis of neighbourhoods of 0.  
\end{definition}

If $K$ is an ordered field and the order is $\cL$-definable, then the order topology on $K$ is an $\cL$-definable field topology. Similarly, if $(K,v)$ is a valued field and the relation $\Div\coloneqq\{(x,y)\in K^2 : v(x)\leqslant v(y)\}$ is $\cL$-definable, then the valuation topology on $K$ is an $\cL$-definable field topology. When $\cK$ is a dp-minimal field, a result of W. Johnson \cite[Theorem 1.3]{johnson18} guarantees the existence of a definable $V$-topology on $K$ (equivalently a field topology induced either by a non-trivial valuation or an absolute value \cite{PZ}).

\subsection{Open theories of topological fields}\label{sec:opentheories}

We work in a first-order setting of topological fields which follows the spirit of \cite{pillay87}, \cite[Section 2]{vandendries1989}, \cite{M} and \cite{guzy-point2010}. The main new ingredient of the present account is that we explicitly allow multi-sorted structures.

\begin{definition}\label{def:T}
Fix an $\cL$-structure $\cK_0$ with $K_0$ a field of characteristic 0. Suppose that 
\begin{enumerate}
\item the restriction of $\cL$ to the sort $\bF$ is a relational extension of $\cL_{\mathrm{field}}^\Omega$, 
\item for every $\bF$-valued $\cL$-term $t(x,z)$ with $x$ a tuple of $\bF$-variables and $z$ a tuple of auxiliary sort variables, there is an $\bF$-valued $\cL$-term $\tilde{t}(x)$ such that 
\[
\cK_0\models (\forall z)(\forall x)(t(x,z)=\tilde{t}(x)),
\]
\item $K_0$ has an $\cL$-definable field topology.
\end{enumerate}
Let $T$ be the $\cL$-theory of $\cK_0$. Any such theory $T$ is called an \emph{$\cL$-theory of topological fields}. 
\end{definition}

\begin{remark}\label{rem:terms}
In a model $\cK$ of an $\cL$-theory of topological fields, it follows by (1) and (2) of the previous definition, that for any $A\subseteq \cK$  
\[
\langle A\rangle_\cL \cap K = \langle A\cap K \rangle_{\cL_\mathrm{field}^\Omega}. 
\]
\end{remark}

\begin{definition}
An $\cL$-theory of topological fields $T$ is called an \emph{open $\cL$-theory of topological fields} if it satisfies in addition the following condition: 
\begin{enumerate}[leftmargin=*, label={($\mathbf{A}$)}] 
\item\label{condA} for every 
$\cL$-formula $\varphi(x,z)$ with $x$ a tuple of $\bF$-variables and $z$ a tuple of auxiliary sort variables, there are $\cL$-formulas $\{\xi_h(z)\}_{h\in H}$ with $H$ a finite set, such that $\varphi(x,z)$ is equivalent modulo $T$ to
\[
\bigvee_{h\in H}\left(\xi_h(z)\rightarrow \left(\bigvee_{i\in I_h}\bigwedge_{j\in J_{ih}} P_{ijh}(x)=0 \wedge \theta_{ih}(x,z)\right)\right)
\] 
where $I_h$ and $J_{ih}$ are finite sets, $P_{ijh}\in \Q(\Omega)[x]\setminus \{0\}$ and $\theta_{ih}(x,z)$ is an $\cL$-formula such that for every model $\cK$ of $T$ and every $a\in \cK^z$,  $\theta_{ih}(\cK,a)$ defines an open set.\footnote{In \cite{guzy-point2010}, only the case when $\cL$ is one-sorted was considered and $K_{0}$ was called a \emph{topological $\cL$-field} in case the additional relation symbols and their complement were interpreted in $K_0$ as the union of an open set and a Zariski closed set.} 
\end{enumerate}
\end{definition}

\begin{remark}\label{rem:sorts} Suppose $T$ is an open $\cL$-theory of topological fields. 
\begin{itemize}[leftmargin=*]
\item Let $T'$ be an expansion by definitions of $T$ in a relational language $\cL'\supseteq \cL$. Then $T'$ is an open $\cL'$-theory of topological fields. In particular, the Morleyization $T_{\Mor}$ of $T$ is an open $\cL_{\Mor}$-theory of topological fields with quantifier elimination.  
\item Given a collection of sorts $\mathcal{G}$ of $\cL^\eq$, let $\cL^\mathcal{G}$ be the restriction of $\cL^\eq$ to the sorts in $\mathcal{G}$. Then, the $\cL^\mathcal{G}$-theory of some (any) model of $T$ is an open $\cL^\mathcal{G}$-theory of topological fields.   
\end{itemize}
\end{remark}

\begin{examples}\label{examples} The theory $T=\mathrm{Th}(\cK_0)$ is an open $\cL$-theory of topological fields in the following cases.
\begin{enumerate}[leftmargin=*] 
\item When $\cK_0$ is a real closed field and $\cL$ is $\cL_{\mathrm{of}}$ the language of ordered fields ${\cL_{\mathrm{field}}\cup\{<\}}$. The definable topology is given by the order topology. We use $\RCF$ for $T$. 

\item When $\cK_0$ is a henselian valued field of characteristic 0 and $\cL$ is the one-sorted language of valued fields given by $\cL_{\Div}=\cL_{\mathrm{field}}\cup\{\Div\}$. The definable topology corresponds to the valuation topology. Condition \ref{condA} follows by quantifier elimination in the multi-sorted $\cL_\RV$-language as defined in \cite{flenner}\footnote{Without loss of generality, we abuse of notation and use $\cL_\RV$ both for the residue characteristic 0 variant and the residue characteristic $p$ variant.}. For convenience, we will hereafter assume that $\cL_\RV$ contains $\cL_{\mathrm{div}}$ in the field-sort. Examples include algebraically, $p$-adically and real closed valued fields, and theories of classical valued fields such as $\mathbb{C}(\!(t)\!)$, $\mathbb{R}(\!(t)\!)$. 
\item Let $\cL_\RV'$-be an $\RV$-extension of $\cL_\RV$, i.e., it only extends the $\RV$-sorts. Any complete $\cL_\RV'$-expansion $T'$ of a complete theory $T$ of henselian valued fields of characteristic 0 is an open $\cL_\RV'$-theory of topological fields. Indeed, in such expansions we still have relative quantifier elimination of field quantifiers and, for a model $\cK$ of $T'$, since the preimage in $K$ of a non-zero point in an $\RV$-sort under its canonical quotient map is an open subset of $K$, condition \ref{condA} is also satisfied. 
\item The following open $\cL$-theories $T$ have, in addition, quantifier elimination in a one-sorted language: 
\begin{itemize}
    \item when $K_0$ is an algebraically closed valued field of characteristic $(0,p)$ with $p\geqslant 0$ and $\cL=\cL_\Div$ (we write $\ACVF_{0,p}$ for $T$); 
    \item when $K_0$ is a real closed valued field and $\cL=\cL_{\mathrm{ovf}}$ the language of ordered valued fields $\cL_{\mathrm{of}}\cup\{\Div\}$ (we write $\RCVF$ for $T$). \item when $K_0$ is a $p$-adically closed field of $p$-rank $d=ef$ and $\cL$ is 
\[
\cL_{p, d}\coloneqq\cL_{\mathrm{field}}\cup \{\Div, c_{1},\cdots,c_{d}\}\cup \{P_{n}: n\geqslant 1\}
\] 
as defined in \cite{pre-ro-84} (abusing of notation, we use in this case $\PCF$ for $T$).
\end{itemize} 
\end{enumerate}
\end{examples}

\begin{remark}\label{rem:notdpmin} 
Observe that not all theories in Examples \ref{examples} are dp-minimal. Indeed, there are various henselian fields of equicharacteristic 0 which are not dp-minimal. By a result of F. Delon \cite{delon81} combined with results of Y. Gurevich and P. H. Schmitt \cite{gurevich_schmitt}, the Hahn valued field $k(\!(t^\Gamma)\!)$ is $\NIP$ if and only if $k$ is $\NIP$ (as a pure field). Even assuming $\NIP$, by a result of A. Chernikov and P. Simon in \cite{chernikov_simon}, when $k$ is algebraically closed, the field $k(\!(t^\Gamma)\!)$ is dp-minimal if and only if $\Gamma$ is dp-minimal. However, there are ordered abelian groups which are not dp-minimal, as follows by a characterization of pure dp-minimal ordered abelian groups due to F. Jahnke, P. Simon and E. Walsberg in \cite[Proposition 5.1]{jahnke_simon_walsberg_2017}. 
\end{remark}

\begin{question} Is there an open $\cL$-theory of topological fields whose topology does not come from an order or a valuation (equivalently, is not a $V$-topology)? 
\end{question} 

\subsection{Basic consequences}\label{sec:basicconsec}
For the rest of Section \ref{sec:topopen}, unless otherwise stated, we let $T$ be an open $\cL$-theory of topological fields and $\cK$ be a model of $T$. By definable we mean $\cL$-definable. We gather some standard consequences that will be used throughout the article. Proofs are either elementary or follow by classical arguments, so they will be omitted. Let us first introduce some notation.  

\begin{notation}\label{nota:zariski}
Let $x=(x_0,\ldots, x_{n-1})$ be a tuple of $\bF$-variables and $y$ be a single $\bF$-variable. Let $\cA$ be a finite subset of $K[x,y]$ and $R\in K[x,y]$. We let the $\cL_{\mathrm{ring}}(K)$-formula $Z_\cA^R(x,y)$ be 
\begin{equation*}
Z_\cA(x,y)\wedge R(x,y)\neq 0. 
\end{equation*}
We write $Z_\cA(x,y)$ for $Z_\cA^1(x,y)$ and  given $P\in K[x,y]$, we let $Z_P(x,y)$ denote $Z_{\{P\}}(x,y)$. Finally, we define $\cA^y \coloneqq\{P\in \cA\mid \deg_y(P)>0\}$. 
\end{notation}

Note that every locally Zariski-closed subset of $K^{n+1}$ is of the form $Z_\cA^R(\cK)$ for some $\cA$ and some $R$. The following lemma follows by induction on degrees and iterated applications of Euclid's algorithm.

\begin{lemma}\label{cor:goodform} Every locally Zariski closed subset of $K^{n+1}$ can be written as a union of sets of the form $Z_{\cB}^{S}(\cK)$ where $S\in K[x,y]$ and either $\cB\subseteq K[x]$, or $\cB^y=\{P\}$ and $\frac{\partial}{\partial y} P$ divides $S$. \qed
\end{lemma}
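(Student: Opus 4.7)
The plan is to rewrite an arbitrary $Z_\cA^R(K)$, via iterated pseudo-division in $K[x][y]$, as a finite union of sets of the desired form. I would proceed by a double induction: primary on $\max_{P \in \cA^y} \deg_y P$ (with the convention $\max \emptyset = 0$) and secondary on $|\cA^y|$. The base case is $\cA^y = \emptyset$, which already gives $\cB := \cA \subseteq K[x]$ and $S := R$.

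The first reduction handles $|\cA^y| \geq 2$. Pick $P_1, P_2 \in \cA^y$ with $d_1 := \deg_y P_1 \leq \deg_y P_2 =: d_2$, let $c_1(x)$ denote the leading coefficient of $P_1$ in $y$, and pseudo-divide to obtain $c_1^{d_2-d_1+1} P_2 = Q P_1 + R_1$ with $\deg_y R_1 < d_1$. Partition $Z_\cA^R(K)$ along $c_1 \neq 0$ and $c_1 = 0$: on the first locus, replace $P_2$ in $\cA$ by $R_1$ and the distinguisher $R$ by $R \cdot c_1$; on the second, adjoin $c_1$ to $\cA$ and replace $P_1$ by $P_1 - c_1 y^{d_1}$. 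One checks that both pieces recover the intersection of $Z_\cA^R(K)$ with the corresponding locus. Since the complexity strictly decreases in each piece (either the maximum $y$-degree drops, or the number of polynomials of that maximum degree drops), iteration yields $|\cA^y| \leq 1$.

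The second reduction handles the case $\cA^y = \{P\}$ with $d := \deg_y P \geq 1$ and $\partial P/\partial y$ not dividing $R$. Split $Z_\cA^R(K)$ along $\partial P/\partial y \neq 0$ versus $\partial P/\partial y = 0$. On the first, replace $R$ by $R \cdot \partial P/\partial y$, and the piece is in the desired form. On the second, adjoin $\partial P/\partial y$ to $\cA$. If $d = 1$, write $P = c(x) y + e(x)$; then $\partial P/\partial y = c(x) \in K[x]$, and the added equation $c(x) = 0$ lets one replace $P$ by $e(x) \in K[x]$, so the new $\cA$ has empty $\cA^y$. If $d \geq 2$, the new $\cA$ has $|\cA^y| = 2$ with maximum $y$-degree $d$, but pseudo-dividing $P$ by $\partial P/\partial y$ (which has $y$-degree $d - 1$) produces a remainder of $y$-degree at most $d - 2$; after the splitting on the relevant leading coefficient and one application of the first reduction, the maximum $y$-degree has strictly dropped, so the primary induction applies.

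The main bookkeeping obstacle is verifying that the derivative splitting at the base $|\cA^y| = 1$ does not send the complexity measure upward in the wrong direction: the naive effect of adjoining $\partial P/\partial y$ is to raise $|\cA^y|$ from $1$ to $2$. The resolution, as indicated above, is that pseudo-division of $P$ by $\partial P/\partial y$ is immediately available and drops the maximum $y$-degree from $d$ to at most $d - 1$, so the combined ``derivative-plus-pseudo-division'' step indeed decreases the primary measure. Once this is checked, termination of the double induction is routine and produces the required decomposition.
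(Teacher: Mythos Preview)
Your approach---iterated pseudo-division in $K[x][y]$ with case-splitting on leading coefficients, organized as an induction on $y$-degrees---is exactly what the paper has in mind when it writes ``induction on degrees and iterated applications of Euclid's algorithm.'' The overall strategy is correct and matches the paper's.

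There is one bookkeeping gap in your termination argument. In the $c_1 = 0$ branch of the first reduction (and in particular when you apply it to $\cA^y = \{P, \partial P/\partial y\}$ after the derivative split), you truncate $P_1$ but leave $P_2 = P$ untouched, so $\deg_y P = d$ persists and neither your primary nor your secondary measure drops after a single step; the claim that ``one application of the first reduction'' lowers the maximum $y$-degree is therefore not justified as stated. The easy fix exploits characteristic $0$: the leading $y$-coefficient $c_1$ of $\partial P/\partial y$ equals $d \cdot c$ where $c$ is the leading $y$-coefficient of $P$, so on the locus $c_1 = 0$ you also have $c = 0$ and may simultaneously replace $P$ by $P - c\,y^d$, dropping its degree as well. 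Alternatively, take as your induction measure the multiset of $y$-degrees of $\cA^y$ under the multiset order: then each branch of the first reduction strictly decreases it, and for the derivative split you bundle the split with enough first-reduction steps to bring the multiset strictly below $\{d\}$. Either way the argument goes through.
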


From condition \ref{condA} we directly obtain the following corollary on the form of definable sets in the field sort. 

\begin{corollary}\label{cor:niceform1} Every definable subset of $K^{n+1}$ is defined by 
\[
\bigvee_{j\in J} Z_{\cA_j}^{S_j}(x,y) \wedge \theta_j(x,y)
\]
where $x=(x_0,\ldots, x_{n-1})$ are $\bF$-variables, $y$ a single $\bF$-variable, and for each $j\in J$, $\theta_j$ is an $\cL(\cK)$-formula that defines an open subset of $K^{n+1}$, $S_j\in K[x,y]$, and either 
\begin{enumerate}
\item $\cA_j\subseteq K[x]\setminus\{0\}$ or 
\item  $\cA_j\subseteq K[x,y]\setminus\{0\}$, $\cA_j^y=\{P_j\}$ and $\frac{\partial}{\partial y} P_j$ divides $S_j$. \qed
\end{enumerate}
\end{corollary}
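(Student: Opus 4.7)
The corollary is a direct combination of assumption $(\mathbf{A})$ with Lemma \ref{cor:goodform}. My plan has three steps: (i) use the field-sort quantifier elimination provided by $(\mathbf{A})$(i) to replace an arbitrary $\cL(K)$-formula defining $X \subseteq K^{n+1}$ by one that is field-sort quantifier-free (with possibly auxiliary-sort parameters); (ii) apply condition $(\mathbf{A})$(ii) to this formula to obtain a finite disjunction of conjunctions of polynomial equations and open conditions; (iii) apply Lemma \ref{cor:goodform} to each resulting Zariski closed conjunct to put it into the required normalized form.

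Concretely, let $\varphi(x,y)$ be a field-sort quantifier-free $\cL(K)$-formula defining $X$, with $x = (x_0, \ldots, x_{n-1})$. By $(\mathbf{A})$(ii), modulo $T$ the formula $\varphi$ is equivalent to a finite disjunction $\bigvee_i Z_{\cA_i}(x,y) \wedge \theta_i(x,y)$ in which each $\cA_i$ is a finite subset of $K[x,y]$ and each $\theta_i$ is an $\cL(K)$-formula defining an open subset of $K^{n+1}$; the parameters from $K$ appear as coefficients of the polynomials of $\cA_i$ and inside the open formulas $\theta_i$. For each $i$, Lemma \ref{cor:goodform} (applied with $R = 1$, so that $Z_{\cA_i}(K)$ is viewed as locally Zariski closed) provides a finite decomposition $Z_{\cA_i}(K) = \bigcup_k Z_{\cB_{ik}}^{S_{ik}}(K)$ in which each pair $(\cB_{ik}, S_{ik})$ is of one of the two forms listed in the corollary. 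Intersecting with $\theta_i(K)$ and collecting over $i$ and $k$ gives the claimed description of $X$.

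I do not anticipate any genuine obstacle: the algebraic content is already packaged in Lemma \ref{cor:goodform}, and assumption $(\mathbf{A})$ is available by hypothesis. The only minor point to keep track of is that the openness of each $\theta_i$ survives the refinement of the equational part (since $\theta_i$ is not modified), so each resulting summand $Z_{\cB_{ik}}^{S_{ik}}(x,y) \wedge \theta_i(x,y)$ has the required conjunctive form.
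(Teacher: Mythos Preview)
Your proposal is correct and matches the paper's intended argument: the paper states the corollary as an immediate consequence of assumption $(\mathbf{A})$ (together with Lemma~\ref{cor:goodform}) and gives no further proof. Your three-step unpacking---field-sort quantifier elimination, the normal form from $(\mathbf{A})$(ii), then Lemma~\ref{cor:goodform} applied to each Zariski-closed conjunct---is exactly the intended derivation.
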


The following proposition follows from condition \ref{condA} by standard arguments and classical results from \cite{vandendries1989}. 

\begin{proposition}\label{prop:conseAA1} The field sort of every model of $T$ is algebraically bounded (in the sense of \cite{vandendries1989}). The algebraic dimension $\mathrm{alg}\Dim$ (in the sense of \cite[Lemma 2.3]{vandendries1989}) on the field sort coincides with $\Dim_{\mathrm{acl}_\bF}$ and defines a dimension function (as defined in \cite{vandendries1989}). In particular, $T$ eliminates the field sort quantifier $\exists^{\infty}$ and the restriction $T'$ of $T$ to the field sort is a geometric theory. \qed
\end{proposition}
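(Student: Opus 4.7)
The plan is to reduce everything to the structural form of definable sets provided by Corollary \ref{cor:niceform1}, and then to invoke the general framework of van den Dries \cite{vandendries1989}. Fix a model $K\models T$ and an $\cL(K)$-formula $\varphi(x,y)$ with $y$ a single field-sort variable. By Corollary \ref{cor:niceform1}, write $\varphi(x,y)$ as a finite disjunction of formulas of the form $Z_{\cA_j}^{S_j}(x,y)\wedge\theta_j(x,y)$, with $\theta_j$ defining an open set and each $\cA_j$ of one of the two forms (1), (2). I would then fix $a\in S_x(K)$ and analyze the fiber $\varphi(a,K)$ disjunct by disjunct.

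For a disjunct of type (1), $\cA_j\subseteq K[x]\setminus\{0\}$, so $Z_{\cA_j}(a,y)$ depends only on $a$: either all polynomials in $\cA_j$ vanish at $a$, in which case the disjunct contributes $\{y:S_j(a,y)\neq 0\}\cap \theta_j(a,K)$, or the contribution is empty. In the former subcase, either $S_j(a,y)\equiv 0$ or $\theta_j(a,K)=\emptyset$, making the contribution empty; otherwise we get a non-empty open subset of $K$ with at most finitely many points excised, which is infinite since $K$ is non-discrete Hausdorff. For a disjunct of type (2), the relevant condition includes $P_j(a,y)=0$. If $P_j(a,y)\not\equiv 0$ as a polynomial in $y$, the contribution has size at most $\deg_y(P_j)$. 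If $P_j(a,y)\equiv 0$, then since $\chara K=0$ we have $\frac{\partial}{\partial y}P_j(a,y)\equiv 0$, and since $\frac{\partial}{\partial y}P_j$ divides $S_j$ we also have $S_j(a,y)\equiv 0$, so the disjunct is empty.

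Consequently $\varphi(a,K)$ is either infinite or of size bounded by $\sum_j\deg_y(P_j)$, where the bound is uniform in $a$. Reading this analysis two different ways simultaneously yields algebraic boundedness of the field sort (the polynomials $P_j$ supply the witnesses demanded by \cite{vandendries1989}) and elimination of the field-sort quantifier $\exists^{\infty}$. Once algebraic boundedness of the field sort is in hand, the remaining assertions are a direct invocation of \cite[Lemma 2.3 and the subsequent discussion]{vandendries1989}: the algebraic dimension $\mathrm{alg}\Dim$ defines a dimension function which coincides with $\Dim_{\acl_K}$, and in the one-sorted case $\acl_K$ satisfies the exchange property, so combined with the elimination of $\exists^{\infty}$ we obtain that $T$ is a geometric theory.

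I do not anticipate any serious obstacle: the substantive content is the case analysis above, and it hinges entirely on two features already built into Corollary \ref{cor:niceform1} — that each $\theta_j$ defines an \emph{open} set (forcing nonempty contributions of type (1) to be infinite in the non-discrete topology) and that $\frac{\partial}{\partial y}P_j$ divides $S_j$ (using $\chara K=0$ to force the degenerate contributions of type (2) to be empty). Everything else is a black-box application of van den Dries's results on algebraically bounded theories of fields.
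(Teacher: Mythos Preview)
Your argument is correct and is precisely the ``standard argument from assumption~$(\mathbf{A})$'' that the paper gestures at without writing out: the paper gives no proof beyond that phrase and a citation to \cite{vandendries1989}, so your case analysis via Corollary~\ref{cor:niceform1} is exactly what was intended. One cosmetic point: for algebraic boundedness in van den Dries's sense you want the witnessing polynomials uniform across models, so you should apply assumption~$(\mathbf{A})$(ii) (and Lemma~\ref{cor:goodform}) directly to a parameter-free $\cL$-formula rather than invoking Corollary~\ref{cor:niceform1} for an $\cL(K)$-formula in a fixed model---but this is the routine fix of treating parameters as extra free variables, and your analysis of the fibers goes through unchanged.
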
 

The following lemma follows by a simple induction on $n$.

\begin{lemma}\label{lem:int_zar} Let $P(x)\in K[x]\setminus\{0\}$ with $x=(x_0,\ldots,x_{n-1})$. Then $\dim(Z_P(\cK))<n$.  In particular, the set $K^n\setminus Z_P(\cK)$ is open and dense in $K^n$ with respect to the ambient topology.\qed 
\end{lemma}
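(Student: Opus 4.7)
The plan is to proceed by induction on $n$. Observe first that, for a subset $X\subseteq K^n$, the inequality $\Dim(X)<n$ is equivalent to $\Int(X)=\emptyset$, since the only projection $K^n\to K^n$ is the identity. Hence it suffices to show that $Z_P(K)$ has empty interior. Once this is done, the two remaining assertions fall out easily: the complement $K^n\setminus Z_P(K)$ is open because $P\colon K^n\to K$ is continuous (as $K$ is a topological field) and $\{0\}\subseteq K$ is closed (as the topology is Hausdorff), so $Z_P(K)=P^{-1}(0)$ is closed; density of the complement will be a direct byproduct of the argument showing empty interior.

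For the base case $n=1$, a non-zero polynomial $P\in K[x_0]$ has only finitely many zeros in $K$. Since by the standing hypothesis (Section \ref{sec:topfields}) the topology on $K$ is Hausdorff and non-discrete, any non-empty open subset of $K$ is infinite. In particular, $Z_P(K)$ has empty interior.

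For the inductive step, suppose the statement holds in $n-1$ variables and let $P\in K[x_0,\ldots,x_{n-1}]\setminus\{0\}$. Write
\[
P(x_0,\ldots,x_{n-1})=\sum_{i=0}^{d} a_i(x_0,\ldots,x_{n-2})\, x_{n-1}^i
\]
with $d\geqslant 0$ and $a_d\in K[x_0,\ldots,x_{n-2}]\setminus\{0\}$. Let $U\subseteq K^n$ be an arbitrary non-empty open set; by the definition of the product topology, $U$ contains a basic open rectangle $U_0\times\cdots\times U_{n-1}$ with each $U_i\subseteq K$ non-empty and open. By the induction hypothesis applied to $a_d$, the complement of $Z_{a_d}(K)$ is dense in $K^{n-1}$, so we can pick a point $c=(c_0,\ldots,c_{n-2})\in U_0\times\cdots\times U_{n-2}$ with $a_d(c)\neq 0$. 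Then $P(c,x_{n-1})$ is a non-zero polynomial in the single variable $x_{n-1}$, and by the base case its zero set in $U_{n-1}$ is a proper subset. Hence there exists $c_{n-1}\in U_{n-1}$ with $P(c,c_{n-1})\neq 0$, producing a point of $U$ outside $Z_P(K)$. Taking $U$ to be contained in $Z_P(K)$ yields a contradiction, so $\Int(Z_P(K))=\emptyset$; taking $U$ arbitrary gives density of the complement.

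There is no real obstacle here: the argument is a routine double induction built on the two facts that polynomial maps are continuous and that non-discrete Hausdorff topological fields have no isolated points. The only point that needs to be tracked carefully is the use of the product topology to reduce an arbitrary non-empty open set to a basic rectangle, so that the induction hypothesis on $a_d$ can be applied inside the first $n-1$ coordinates.
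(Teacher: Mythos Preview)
Your proof is correct and takes essentially the same approach as the paper, which simply records that the lemma ``follows by a simple induction on $n$'' and omits the details. Your write-up fills in exactly the standard argument the paper is gesturing at: reduce $\Dim(Z_P(K))<n$ to $\Int(Z_P(K))=\emptyset$, handle $n=1$ using that non-discrete Hausdorff field topologies have infinite open sets, and for the inductive step expand $P$ in the last variable and apply the hypothesis to the leading coefficient.
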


As a corollary of the previous results one can show the following consequence on the topological dimension. 

\begin{proposition}\label{prop:conseAA2} For every $n\geqslant 1$ and every definable subset $X\subseteq K^n$, $\Dim(X)=\Dim_{\mathrm{acl}_K}(X)$. The topological dimension satisfies thus the following properties for definable sets $X, Y\subseteq K^n$: 
\begin{enumerate}[(D1)]
\item $\Dim(X)=0$ if and only if $X$ is finite and non-empty,  
\item $\Dim(X\cup Y)=\max\{\Dim(X),\Dim(Y)\}$.
\item $\Dim(\overline{X}\setminus X)<\Dim(X)=\Dim(\overline{X})$,
\item $\Dim$ is additive, that is: for a non-empty definable set $X\subseteq  K^{m+n}$ and $d \in \{0,1, \ldots, n\}$, 
\[
\dim(\!\!\bigcup_{a \in X(d)} X_a)= \dim(X(d)) + d, 
\]
where $X(d) \coloneqq \{ a \in K^m : \dim X_a = d \}$. In addition, $X(d)$ is definable.\qed
\end{enumerate}
\end{proposition}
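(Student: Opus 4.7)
The identity $\Dim(X)=\Dim_{\acl_K}(X)$ is the heart of the proposition: once it is established, (D1), (D2) and (D4) transfer directly from the fact (Proposition \ref{prop:conseAA1}) that $\Dim_{\acl_K}$ is a dimension function, and (D3) admits a short closure argument. My plan is therefore to first prove the auxiliary statement that for any $\cL$-definable $Y\subseteq K^n$, $\Int(Y)\neq\emptyset$ if and only if $\Dim_{\acl_K}(Y)=n$, and then to deduce both the main identity and the four properties.

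For the auxiliary statement, the direction $(\Rightarrow)$ follows from Lemma \ref{lem:int_zar}: a non-empty open set avoids every proper Zariski closed set, so by saturation it contains a tuple whose coordinates are algebraically independent over the parameters. For $(\Leftarrow)$, I would apply Corollary \ref{cor:niceform1} to write $Y=\bigcup_j (Z_{\cA_j}^{S_j}(K)\cap\theta_j(K))$ and check that if some piece $Y_j$ has algebraic dimension $n$, then $\cA_j$ must be empty: a non-empty $\cA_j\subseteq K[x]\setminus\{0\}$ in case (1) confines $Y_j$ to $Z_P$ for some non-zero $P\in K[x]$, whose algebraic dimension is strictly smaller than $n$ by Lemma \ref{lem:int_zar}; in case (2), the fibres of $Y_j$ over $K^{n-1}$ are finite (being simple zeroes of $P_j(x,y)$), forcing $\Dim_{\acl_K}(Y_j)\leq n-1$. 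Hence $Y_j=\{S_j\neq 0\}\cap\theta_j$ is open. The identity $\Dim(X)=\Dim_{\acl_K}(X)$ then follows: $\leq$ because any projection $\pi(X)\subseteq K^\ell$ with non-empty interior has algebraic dimension $\ell$ by the auxiliary claim, and projections do not increase algebraic dimension; $\geq$ by projecting $X$ onto $d=\Dim_{\acl_K}(X)$ algebraically independent coordinates of a generic point of $X$, whence the image has algebraic dimension $d$ in $K^d$ and so non-empty interior.

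Properties (D1), (D2) and (D4) now transfer from Proposition \ref{prop:conseAA1}, the definability of $X(d)$ coming from a first-order expression of ``the fibre has algebraic dimension $d$'' via elimination of $\exists^\infty$ on the field sort. For (D3), $\Dim(X)=\Dim(\overline X)$ reduces via Corollary \ref{cor:niceform1} to checking that the closure of each piece $Z_{\cA_j}^{S_j}\cap\theta_j$ has the same algebraic dimension as the piece itself: open pieces (empty $\cA_j$) contribute the full $\Dim(X)$, while pieces with non-empty $\cA_j$ have closure contained in the Zariski closed set $Z_{\cA_j}$. The strict inequality $\Dim(\overline X\setminus X)<\Dim(X)$ is the only genuinely topological step; I expect it to be the main delicate point and to follow by arguing that on each piece the frontier lies in a Zariski closed set of strictly smaller algebraic dimension, which in case (2) requires controlling the continuity of simple roots of $P_j$ under the definable field topology. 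This is the classical input from \cite{vandendries1989} invoked by the paper.
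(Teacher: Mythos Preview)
Your outline is sound, and indeed the paper gives no proof at all: the proposition ends with a \qed\ inside the statement, and the surrounding text simply says it ``follows as a corollary of the previous results'' together with the classical results of \cite{vandendries1989}. So there is no approach to compare against; what you have written is a reasonable way to unpack the claim.

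Two small points worth tightening. First, your appeal to ``saturation'' in the $(\Rightarrow)$ direction of the auxiliary claim should be phrased as passing to a sufficiently saturated elementary extension (or, equivalently, noting that $\Dim_{\acl_K}$ is an invariant of the defining formula, so one may compute it in any model); the model $K$ itself need not be saturated. Second, you are right that (D3) is where the topology genuinely enters, and your sketch is on the correct track, but it is also the place where you lean most heavily on \cite{vandendries1989} without spelling out the argument. In particular, the frontier inequality is not purely algebraic: for pieces of type~(2) in Corollary~\ref{cor:niceform1} one uses that simple roots of $P_j$ vary continuously in the parameters (implicit function type behaviour in the definable topology), so that a limit point of the graph either lies on the graph or falls into the vanishing locus of $s_{P_j}$, which has strictly smaller dimension. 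This is exactly the ``classical input'' you invoke, and the paper is content to cite it; if you want your write-up to be self-contained, that step deserves a few more lines.
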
 

As a convention, given an definable set $X$, we say that a property holds \emph{almost everywhere on $X$} if there is a definable subset $Y\subseteq X$ such that $\dim(X\setminus Y)<\dim(X)$ and the property holds on $Y$.

\subsection{Almost continuity of definable correspondences}\label{sec:correspon}
We will show in Section \ref{sec:celldecomp} a cell decomposition theorem for open $\cL$-theories of topological fields. An essential part of its proof is to show that definable correspondences (``multi-valued functions'') are almost everywhere continuous. We show such a result adapting to our setting the strategy employed by Simon-Walsberg for dp-minimal theories \cite{simon-walsberg2016}. Note that correspondences cannot be avoided in the absence of finite Skolem functions. Let us recall their definition.

\begin{definition}[{\cite[Section 3.1]{simon-walsberg2016}}] \label{def:correspondence} 
A {\it correspondence} $f\colon E\rightrightarrows K^\ell$ consists of a definable set $E\subseteq K^n$ together with a definable subset $\mathrm{graph}(f)$ of $E\times K^\ell$ such that
\[
0< \vert\{y\in K^\ell : (x,y)\in \mathrm{graph}(f)\}\vert<\infty, \text{ for all } x\in E.
\]
The set $\{y\in K^\ell : (x,y)\in \mathrm{graph}(f)\}$ is also denoted by $f(x)$. For a positive integer $m$, we say $f$ is an \emph{$m$-correspondence} if $|f(x)|=m$ for all $x\in E$. The correspondence $f$ is {\it continuous} at $x\in E$ if for every open set $V\subseteq K^\ell$ containing $f(x)$, there is an open neighbourhood $U$ of $x$ such that $f(U)\subseteq V$. 
\end{definition}
Let $f\colon E\subseteq K^n\rightrightarrows K^{\ell}$ be a correspondence. As a convention, when $n=0$, we identify $\mathrm{graph}(f)$ with a finite subset of $K^{\ell}$. If $E$ is an open subset and $\ell=0$, then we identify $\mathrm{graph}(f)$ with the set $E$. Note that a $1$-correspondence can be identified with a function. The following lemma is a reformulation of \cite[Lemma 3.1]{simon-walsberg2016}. 

\begin{lemma}\label{lem:corre_local_cont}
Let $U \subseteq K^n$ be open and let $f \colon U \rightrightarrows K^\ell$ be a continuous $m$-correspondence. Every $a\in U$ has an open neighbourhood $V$ such
that there are continuous functions $g_1,\ldots, g_m\colon V \to M^\ell$ such that 
$\Graph(g_i) \cap \Graph(g_j)=\emptyset$ when $i\neq j$ and
\[
\Graph(f\restriction V)=  \Graph(g_1) \cup \cdots \cup \Graph(g_m).  
\]
In addition, if $f$ is definable, we can further choose $V$ and the functions $g_i$ to be definable. 
\qed
\end{lemma}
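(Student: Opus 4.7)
The plan is to separate the $m$ values of $f(a)$ by pairwise disjoint open neighbourhoods and then use continuity to decompose $f$ locally. Write $f(a)=\{b_1,\dots,b_m\}$; since $K^\ell$ is Hausdorff, choose pairwise disjoint open $W_i\ni b_i$. Continuity of $f$ at $a$ then provides an open $V_0\subseteq U$ around $a$ with $f(V_0)\subseteq W_1\sqcup\cdots\sqcup W_m$.

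I would then shrink $V_0$ to an open $V\ni a$ on which each fibre meets each $W_i$ in exactly one point. Because $|f(x)|=m$ and $f(x)\subseteq \bigsqcup_i W_i$, pigeonhole reduces this to showing $|f(x)\cap W_i|\geq 1$ for every $i$ and all $x$ near $a$: for each $i$ I would produce an open neighbourhood of $a$ on which $f$ still meets $W_i$, and then intersect these $m$ neighbourhoods with $V_0$. On the resulting $V$, set $g_i(x)$ to be the unique point of $f(x)\cap W_i$; these functions are $\cL(M)$-definable from $\Graph(f)$ and the $W_i$, have pairwise disjoint graphs by construction, and together cover $\Graph(f\restriction V)$. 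Continuity of each $g_i$ at an arbitrary $x_0\in V$ is then obtained by rerunning the same construction at $x_0$, replacing $W_i$ by $W_i\cap W'$ for any open $W'\ni g_i(x_0)$ while keeping the other $W_j$ fixed; this yields an open neighbourhood of $x_0$ on which $g_i$ takes values in $W'$.

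The main obstacle is the shrinking step: bare upper-semicontinuity of $f$ does not on its own force $f(x)$ to meet each $W_i$ for $x$ near $a$. What is needed is the lower-semicontinuous half of continuity for correspondences with constant fibre size $m$, which here comes either from the appropriate working notion of continuous correspondence (as in Lemma 3.1 of \cite{simon-walsberg2016}) or from a dimension/closedness argument on the definable set $\Graph(f)$ in our tame setting, exploiting that the fibre size is the constant $m$. Once that input is in hand, everything else---in particular all the definability claims when $f$ is definable---is immediate from the construction.
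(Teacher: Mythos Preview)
Your argument is the standard one and is correct; the paper itself offers no proof here, merely citing \cite[Lemma~3.1]{simon-walsberg2016}. You are also right to isolate the shrinking step as the only nontrivial point---and in fact your diagnosis is sharper than you perhaps realise. With the paper's literal Definition~\ref{def:correspondence} (which is upper semicontinuity only), the lemma is \emph{false}: over $K=\R$ the $2$-correspondence given by $f(x)=\{x,1\}$ for $x\leqslant 0$ and $f(x)=\{0,x\}$ for $x>0$ is definable in $\RCF$, satisfies $|f(x)|=2$ everywhere, and is upper semicontinuous at every point (at $0$, any open $V\supseteq\{0,1\}$ contains a neighbourhood of $0$, and for small $|x|$ both values of $f(x)$ lie in $V$); yet it cannot split into two continuous branches on any neighbourhood of $0$, since both values of $f(x)$ tend to $0$ as $x\to 0^{+}$ while $1\in f(0)$.

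This example also rules out your fallback option: it is definable in an o-minimal structure and has closed graph, so no tameness, closedness or dimension argument will manufacture lower semicontinuity from upper semicontinuity plus constant fibre size alone. The correct resolution is the first one you name: the intended notion of ``continuous correspondence'' (as used in \cite{simon-walsberg2016}) must include the lower-semicontinuous half, i.e.\ continuity for the Vietoris topology on finite subsets of $K^\ell$. Under that reading, each set $\{x\in V_0:f(x)\cap W_i\neq\emptyset\}$ is open, pigeonhole then finishes the shrinking step immediately, and the remainder of your argument---including the definable refinement---goes through verbatim.
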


The following result is a reformulation of \cite[Proposition 3.7]{simon-walsberg2016} in which we isolate the components of its proof in an axiomatic way. Recall that a family of sets $F$ is said to be \emph{directed} if for every $A,B\in F$ there is $C\in F$ such that $A\cup B\subseteq C$.  

\begin{proposition}[{\cite[Proposition 3.7]{simon-walsberg2016}}]\label{prop:continuity} Let $T$ be an $\cL$-theory of topological fields (not necessarily satisfying condition \ref{condA}) and $\cK$ be a model of $T$. Suppose $\cK$ satisfies the following properties 
\begin{enumerate}
\item if $A$ is a non-empty definable open subset of $K^n$ and $B$ is a definable subset of $A$ which is dense in $A$, then $\Int(B)$ is dense in $A$; in particular, $\dim(A\setminus B)<\dim(A)$. 
\item if $A, A_{1},\ldots, A_{k}$ are non-empty definable subsets of $K^n$,  $A$ is open and $A=\bigcup_{i=1}^{k} A_{i}$, then, $\Int(A_{i})\neq \emptyset$ for some $1\leqslant i\leqslant k$.   
\item if $C\subseteq K^{m+n}$ is a definable set such that the definable family $\{C_{a} : a\in K^m\}$ is a directed family and $\bigcup_{a\in K^m} C_{a}$ has non-empty interior, then, there is $a\in K^m$ such that $C_a$ has non-empty interior.
\item There is no infinite definable discrete subset of $K^{n}$.
\end{enumerate}
Then,  for $V \subseteq K^n$ a definable open set, every definable correspondence $f\colon V \rightrightarrows K^\ell$ is continuous on an open dense subset of $V$, and thus is continuous almost everywhere on $V$.
\end{proposition}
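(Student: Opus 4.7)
The plan is to adapt the proof of \cite[Proposition 3.7]{simon-walsberg2016} to the present axiomatic setting, replacing appeals to dp-minimality with the four hypotheses (1)--(4). I would prove the following local reformulation, which immediately yields the proposition: every non-empty definable open $W \subseteq V$ contains a non-empty definable open $W' \subseteq W$ on which $f$ is continuous; the union of all such $W'$ is then a definable open dense subset of continuity points of $f$.

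\emph{Step 1: reduce to constant fibre size and recognise the continuity locus.} My first step would use condition (4) to bound $\sup_{x\in V}|f(x)|$ by some integer $m$ (otherwise one extracts an infinite definable discrete set from the fibres of $f$). I would then partition $V$ into the definable pieces $V_k = \{x : |f(x)| = k\}$ for $1 \le k \le m$, apply (2) to find a $V_k$ with non-empty interior, and, noting that this reduction works inside every non-empty open subset, restrict to a non-empty definable open $W \subseteq V$ on which $|f(\cdot)| = m$ is constant. On such a $W$, the analogue of Lemma \ref{lem:corre_local_cont} tells us that $f$ is continuous at $x_0$ iff the $m$ points of $f(x_0)$ separate into disjoint open boxes and $f$ splits locally near $x_0$ as the disjoint union of the graphs of $m$ continuous functions; this condition is $\cL$-definable (via the basis $\chi_\tau$) and manifestly open in $W$. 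Call the resulting continuity locus $D \subseteq W$.

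\emph{Step 2: derive a contradiction from $D=\emptyset$.} Assuming for contradiction that $f$ is discontinuous on all of $W$, I would consider the definable set of spurious limit points
\[
E = \bigl\{(x,y) \in W \times K^\ell : y \in \overline{\mathrm{graph}(f)}_x \setminus f(x)\bigr\},
\]
whose projection onto $W$ equals $W$. For each basic neighbourhood parameter $a$, let $C_a$ consist of those $x \in W$ admitting a witness $y \in E_x$ approximated by values of $f$ taken within the $a$-neighbourhood of $x$. The family $\{C_a\}_a$ is directed under refinement of $a$ and covers $W$; by (3), some $C_{a_0}$ has non-empty interior $W_0$. Applying (1), after shrinking $W_0$ I would arrange that the associated fibres $E^{a_0}_x$ are finite on a dense open $W_0' \subseteq W_0$ (were they to contain an open subset, the finiteness of $f(x)$ together with (2) would be violated). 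The enlarged definable correspondence $x \mapsto f(x) \cup E^{a_0}_x$ on $W_0'$ then has cardinality at least $m+1$ everywhere, contradicting the maximality of $m$ from Step 1.

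\emph{Main obstacle.} I expect the delicate point to be the extraction of a definable, \emph{finite-valued} enlargement of $f$ from the closure of its graph. This is exactly where (1) and (3) interact essentially: (3) is needed to promote a pointwise modulus of discontinuity into a uniform one on a non-empty open set, while (1) upgrades topological density of the extracted limit set to interior density, so that (4) forces the enlargement to have finite generic fibres. Once this step is in place, the bounded-cardinality argument closes the proof.
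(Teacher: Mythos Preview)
Your sketch aims to reproduce the Simon--Walsberg argument, which is precisely what the paper does (its proof is a one-line citation to \cite{simon-walsberg2016} with the substitutions (1)--(4) for the corresponding lemmas there). However, the ``contradiction'' at the end of Step~2 does not close. You produce $g(x)=f(x)\cup E^{a_0}_x$ with $|g(x)|\ge m+1$ on $W_0'$ and claim this contradicts the maximality of $m$; but $m$ was $\sup_{x\in V}|f(x)|$, not a bound on arbitrary definable correspondences, and $g\neq f$, so nothing is contradicted. If you intend to iterate (replace $f$ by $g$ and repeat), you owe a termination argument, and none is given. One route is to observe that every iterate has graph contained in $\overline{\mathrm{graph}(f\restriction W)}$, whence all fibre sizes are bounded by $|\overline{\mathrm{graph}(f\restriction W)}_x|$---but then you must show these closure-fibres are generically finite, which is a dimension statement not obviously extractable from (1)--(4) alone. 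Relatedly, your claim that $E^{a_0}_x$ is generically finite because otherwise ``the finiteness of $f(x)$ together with~(2) would be violated'' is not justified: the union $\bigcup_{x'} f(x')$ over an open neighbourhood of $x$ is a union of finite sets indexed by an infinite set and can perfectly well be dense in an open subset of $K^\ell$.

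A smaller issue: in Step~1 you invoke condition~(4) to bound $\sup_x|f(x)|$, but~(4) forbids infinite definable \emph{discrete} sets, and unbounded fibre cardinality does not obviously manufacture one---each $f(x)$ is individually finite, and there is no evident definable way to assemble them into a single infinite discrete subset of some $K^N$. The bound is available after passing to a saturated model via a finite-subcover argument on the sets $\{x:|f(x)|\le k\}$, but that is not the reasoning you gave. For the precise structure of the argument you should consult \cite{simon-walsberg2016} directly.
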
 

\begin{proof} The proof is a word by word analogue of the proof of {\cite[Proposition 3.7]{simon-walsberg2016}} after replacing \cite[Lemma 2.6]{simon-walsberg2016} by condition (1), \cite[Corollary 2.7]{simon-walsberg2016} by condition (2), \cite[Lemma 3.5]{simon-walsberg2016} by condition (3) and \cite[Lemma 3.6]{simon-walsberg2016} by condition (4). 
\end{proof}

We now show that all four conditions in Proposition \ref{prop:continuity} hold for open $\cL$-theories of topological fields. 
% So we continue working with $T$ an open $\cL$-theory of topological fields and let $\cK$ be a model of $T$. 

\begin{lemma}\label{lem:condi1} If $A$ is a definable open subset of $K^n$ and $B$ is a definable subset of $A$ which is dense in $A$, then $\Int(B)$ is dense in $A$. In particular, $\dim(A\setminus B)<\dim(A)$. 
\end{lemma}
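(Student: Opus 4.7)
The plan is to exploit the explicit structural description of definable subsets of $K^n$ given by Corollary \ref{cor:niceform1}. Writing the coordinates of $K^n$ as $(x,y)$ with $y$ a single variable, I would decompose $B$ as a finite union of pieces of the form $Z_{\cA_j}^{S_j}(x,y) \wedge \theta_j(x,y)$ with $\theta_j$ open. Each such piece falls into one of two classes: (i) if $\cA_j = \emptyset$, the piece reduces to $\{S_j \neq 0\} \wedge \theta_j$, an open subset of $K^n$; (ii) otherwise, the piece has topological dimension strictly less than $n$. Indeed, in case (1) of the corollary with some nonzero $P \in \cA_j \subseteq K[x]$, Lemma \ref{lem:int_zar} gives $\dim Z_P(K) \leq n-2$, so the piece sits inside $Z_P \times K$ of dimension at most $n-1$ by additivity (D4); in case (2) every fibre over $x$ is finite, so additivity again gives dimension at most $n-1$. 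Writing $B = B_0 \cup B_1$ according to this dichotomy, $B_0$ is open and $\dim(B_1) < n$ by (D2).

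The central step is to show $B_0$ is dense in $A$. Suppose for contradiction there were a non-empty open $U \subseteq A$ disjoint from $B_0$. By density of $B$ in $A$, the set $B \cap U$ would be dense in $U$, hence $\overline{B \cap U} \supseteq U$ and so $\dim(\overline{B \cap U}) \geq n$. Property (D3) of Proposition \ref{prop:conseAA2} then yields $\dim(B \cap U) \geq n$. But $B \cap U \subseteq B_1$ forces $\dim(B \cap U) < n$, a contradiction. Since $B_0$ is open and contained in $B$, we have $B_0 \subseteq \Int(B)$, and therefore $\Int(B)$ is dense in $A$.

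For the \emph{in particular} clause, density of $\Int(B)$ in the open set $A$ implies that $A \setminus \Int(B)$ has empty interior in $K^n$: any point of its interior would yield a non-empty open subset of $A$ disjoint from $\Int(B)$. Since $\dim X = n$ forces $\Int X \neq \emptyset$ (by the definition of $\Dim$ applied to the identity projection $K^n \to K^n$), we conclude $\dim(A \setminus \Int(B)) < n = \dim(A)$, and the inclusion $A \setminus B \subseteq A \setminus \Int(B)$ gives the desired bound $\dim(A \setminus B) < \dim(A)$.

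No step looks genuinely delicate; the entire argument is a direct combination of the normal-form theorem with the basic properties of topological dimension. The only point that requires a little care is the bookkeeping of the two subcases of Corollary \ref{cor:niceform1} in order to confirm that every piece with $\cA_j \neq \emptyset$ has dimension $< n$, but this is handled uniformly via additivity.
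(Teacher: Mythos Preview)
Your argument is correct and follows the same strategy as the paper: split $B$ into an open part and a part of dimension $<n$, then use a dimension contradiction. The paper is a bit more economical---it invokes assumption~$(\mathbf{A})$ directly rather than Corollary~\ref{cor:niceform1} (so any disjunct with a nonempty polynomial part sits in a proper Zariski-closed set, with no need for your case split on the shape of $\cA_j$), and it only argues $\Int(B)\neq\emptyset$, density following by relativizing to an arbitrary open $U\subseteq A$.
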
 

\begin{proof}
It suffices to show that $\Int(B)\neq\emptyset$. By condition \ref{condA}, $B$ is defined by a formula of the form $\bigvee_{i\in I}\bigwedge_{j\in J_i} P_{ij}(x)=0 \wedge \theta_i(x)$, where $P_{ij}\in K[x]\setminus \{0\}$ and $\theta_i(x)$ defines an open set. Suppose that the set of indices $J_i$ is non-empty. Therefore, the algebraic dimension of $B$ is strictly less than $n$. Since the topological and the algebraic dimension coincide, $\dim(B)<n$. By Proposition \ref{prop:conseAA2}, $\dim(A\setminus B)=n$ which implies there is an open subset $U\subseteq A$ disjoint from $B$. This contradicts the density of $B$ in $A$. Then, there must be $i\in I$ such that $J_i=\emptyset$, hence $\Int(B)\neq\emptyset$.
\end{proof}

\begin{lemma}\label{lem:condi3} Suppose $C\subseteq K^{m+n}$ is a definable set inducing a definable family $\{C_{a} : a\in K^m\}$ which is directed. If $\bigcup_{a\in K^m} C_{a}$ has non-empty interior, then there is $a\in K^m$ such that $C_a$ has non-empty interior.
\end{lemma}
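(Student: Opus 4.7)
The plan is to argue by contradiction, assuming no $C_a$ has non-empty interior, i.e., that the definable set $C(n)=\{a\in K^m:\dim C_a=n\}$ from Proposition~\ref{prop:conseAA2}(D4) is empty (recall that in $K^n$, a definable set has dimension $n$ iff it has non-empty interior). Since $C(n)=\emptyset$ is first-order, it transfers to every elementary extension. Using assumption $\mathbf{(A)}$, I write $C=\bigcup_{i\in I}(X_i\cap\Theta_i)$ with $X_i=\bigcap_{j\in J_i}\{(a,y):P_{ij}(a,y)=0\}$ Zariski closed and $\Theta_i$ open, and set $\Sigma_i=\{a:P_{ij}(a,\cdot)\equiv 0\text{ in }y\text{ for every }j\in J_i\}$ and $T_i=\{a:(\Theta_i)_a\neq\emptyset\}$. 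Then the fiber $(X_i\cap\Theta_i)_a$ has non-empty interior iff $a\in\Sigma_i\cap T_i$, so $C(n)=\emptyset$ translates to $\Sigma_i\cap T_i=\emptyset$ for every $i\in I$.

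The main step is to exploit directedness via saturation. I pass to a $|K|^+$-saturated elementary extension $K^*\succeq K$; by directedness in $K$, the partial type $q(c)=\{\forall y\,(\psi(a,y)\to\psi(c,y)):a\in K^m\}$ (with $\psi$ defining $C$) is finitely satisfiable and hence realized by some $c^*\in(K^*)^m$, yielding $C_{c^*}^{K^*}\supseteq C_a^{K^*}$ for every $a\in K^m$; in particular $V(K)\subseteq C_{c^*}^{K^*}$, where $V=\Int(\bigcup_a C_a)$ is the non-empty open $K$-definable witness of the hypothesis. The crucial construction is then a nonzero polynomial $Q\in K^*[y]$ vanishing on $C_{c^*}^{K^*}$: writing $I'=\{i:(\Theta_i)_{c^*}^{K^*}\neq\emptyset\}$, for each $i\in I'$ the transferred condition $\Sigma_i^{K^*}\cap T_i^{K^*}=\emptyset$ yields some $j_i\in J_i$ with $P_{ij_i}(c^*,\cdot)\not\equiv 0$ in $y$, and I set $Q(y):=\prod_{i\in I'}P_{ij_i}(c^*,y)$, a nonzero element of $K^*[y]$ as a finite product of nonzero polynomials in an integral domain. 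For $i\in I\setminus I'$ the fiber $(X_i\cap\Theta_i)_{c^*}^{K^*}$ is empty, so $C_{c^*}^{K^*}=\bigcup_{i\in I'}(X_i\cap\Theta_i)_{c^*}^{K^*}\subseteq\{y:Q(y)=0\}$; combining gives $Q(y)=0$ for every $y\in V(K)$.

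I conclude with the elementary fact that no nonzero polynomial in $K^*[y]$ can vanish on a non-empty open subset of $K^n$. Pick a basic open box $W_1\times\cdots\times W_n\subseteq V(K)$ with each $W_k$ infinite (since $K$ is non-discrete); by induction on $n$, writing $Q=\sum_k Q_k(y_1,\dots,y_{n-1})y_n^k$, for any $(b_1,\dots,b_{n-1})\in W_1\times\cdots\times W_{n-1}$ the univariate polynomial $Q(b_1,\dots,b_{n-1},y_n)\in K^*[y_n]$ has infinitely many roots in $K\subseteq K^*$ and hence vanishes identically, whence each $Q_k$ vanishes on $W_1\times\cdots\times W_{n-1}$; iteration forces $Q\equiv 0$, contradicting the construction. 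The hard part of the argument will be the construction of $Q$: assumption $\mathbf{(A)}$ and directedness must be coupled via saturation to force $C_{c^*}^{K^*}$ into a proper Zariski-closed set defined over $K^*$, and once this is achieved the saturation bookkeeping and the Zariski-density conclusion are routine.
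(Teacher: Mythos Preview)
Your proof is correct, and it takes a genuinely different route from the paper's argument.

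The paper proceeds by induction on $n$: it first refines the disjuncts using Corollary~\ref{cor:niceform1} so that each $\cA_i$ either lies in $K[x,\tilde y]$ or has a single polynomial in the top variable $y_n$ with controlled separant, and then splits each projection $\pi(C_a)$ according to whether the fiber over $u\in K^{n-1}$ is open or uniformly finite. Directedness is used to rule out the possibility that only the ``finite-fiber'' part contributes full dimension, and the inductive hypothesis is applied to the directed family $\{\pi(C_a)_1\}$ in $K^{n-1}$. The argument stays entirely inside $K$ and is constructive in spirit.

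Your argument instead packages directedness into a single compactness step: passing to a $|K|^+$-saturated $K^*\succ K$, you realise the type expressing $C_a\subseteq C_c$ for all $a\in K^m$ by some $c^*$, so that $V(K)\subseteq C_{c^*}^{K^*}$. The transfer of the definable condition $C(n)=\emptyset$ (equivalently $\Sigma_i\cap T_i=\emptyset$ for all $i$) then forces $C_{c^*}^{K^*}$ into a proper Zariski-closed set $\{Q=0\}$ with $Q\in K^*[y]\setminus\{0\}$, and the elementary box argument finishes. This is shorter and more conceptual: it replaces the induction on $n$ and the fiber bookkeeping by a single saturation step plus Lemma~\ref{lem:int_zar}-style Zariski density. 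The paper's proof, on the other hand, avoids changing models and gives a clearer picture of \emph{which} $a$ has $C_a$ with interior (it is located via the inductive descent).

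Two minor points worth tightening in your write-up: first, note explicitly that $I'\neq\emptyset$ (otherwise $C_{c^*}^{K^*}=\emptyset$ would already contradict $V(K)\subseteq C_{c^*}^{K^*}$), so that $Q$ is a non-empty product; second, the decomposition $C=\bigcup_i(X_i\cap\Theta_i)$ comes from assumption~$(\mathbf{A})$ applied modulo $T$, hence holds verbatim in $K^*$, which is what makes the transfer of $\Sigma_i\cap T_i=\emptyset$ legitimate.
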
 

\begin{proof}
Let $\varphi(x,y)$ with $\ell(x)=m$ and $\ell(y)=n$ be an $\cL(\cK)$-formula defining $C$. Let $Y\subseteq K^n$ denote the definable set $\bigcup_{a\in K^m} C_a$. By hypothesis, $\Int(Y)\neq\emptyset$. Since the family $\{C_a: a\in K^m\}$ is directed, we may assume there are infinitely many different $C_{a}$ in the family (as otherwise the result follows directly from Proposition \ref{prop:conseAA2}).

For $y=(y_1,\ldots, y_n)$, we let $\tilde y$ denote the tuple $(y_1,\ldots, y_{n-1})$. By Corollary \ref{cor:niceform1} applied to the formula $\varphi(x,y)$ with respect to the variable $y_{n}$,  $\varphi(x,y)$ is equivalent to a finite disjunction $\bigvee_{i\in I} \varphi_i$ where each $\varphi_i$ is of the form  $Z_{\cA_i}^{S_i}(x,y)\wedge \theta_i(x,y)$ where $\theta_i(K)$ defines an open subset of $K^{m+n}$ and either 
\begin{enumerate}
\item $\cA_i\subseteq K[x, \tilde y]$, or  
\item $\cA_i^{y_{n}}=\{P_i\}$ and $\frac{\partial }{\partial y_{n}} P_i$ divides $S_i$. 
\end{enumerate}
Collect all the subformulas of the disjunction of form (1) (resp. form (2)) and denote by $\varphi^{1}(x,y)$ (resp. $\varphi^{2}(x,y)$) their disjunction.
We have that $\varphi(x,y)=\varphi^1(x,y)\vee \varphi^2(x,y)$. 

Note that if $\cA_i=\emptyset$ for some $i\in I$, then each fiber $C_a$ contains the open set $\theta_i(a,\cK)$ and has therefore non-empty interior. Thus, we may assume that $\cA_i\neq \emptyset$ for all $i\in I$. We proceed by induction on $n$. Let $d$ be the maximum of the degrees (in $y_{n}$) of the polynomials occurring in all $\cA_i$'s. 

Assume $n=1$. Suppose first that $\cA_i\subseteq K[x]$ for some $i\in I$. Then, the fiber $C_a$ contains an open set whenever $Z_{\cA_i}^S(a,\cK)\neq \emptyset$. If $Z_{\cA_i}(a, \cK)= \emptyset$ for every $a\in K^m$, then we remove the corresponding member from the disjunction. Therefore, we are left with the case where $\varphi(x,y)=\varphi^2(x,y)$. We show this case cannot happen. First, note that in this situation each fiber $C_a$ has finite cardinality bounded by $d|I|$. Since the family $\{C_a:a\in K^m\}$ is directed, there is $a_{0}\in K^m$ such that $\varphi(a_{0}, \cK)=Y$. But this contradicts that $Y$ contains an open set (and is thus infinite). This concludes the case $n=1$.

Now assume $n>1$. Let $\pi\colon K^{n}\to K^{n-1}$ denote the projection onto the first $n-1$ coordinates. For $(a,u)\in K^{m}\times K^{n-1}$ we denote by $C_{a,u}$ the fiber $(C_a)_u=\{b\in K: (a,u,b)\in C\}$. By the form of each formula $\varphi_i$, each fiber $C_{a,u}$ either contains a non-empty open subset or is finite (and bounded by $d|I|$). We uniformly partition the projection $\pi(C_a)$ of each fiber $C_a$ into sets $\pi(C_a)_1$ and $\pi(C_a)_2$ where 
\begin{align*}
& \pi(C_a)_1\coloneqq\{u\in K^{n-1}: C_{a,u} \text{ contains an open set }\}\text{ and } \\
& \pi(C_a)_2\coloneqq\{u\in K^{n-1}: |C_{a,u}|\leqslant d|I|\}. 
\end{align*}
Since the definition is uniform, we have 
\[
\pi(Y)= \pi(\bigcup_{a\in K^m} C_a)= \bigcup_{a\in K^m} \pi(C_a) = \bigcup_{a\in K^m} \pi(C_{a})_1 \cup \bigcup_{a\in K^m} \pi(C_{a})_2. 
\]
As $Y$ contains an open set, so does $\pi(Y)$. Therefore, by Proposition \ref{prop:conseAA2}, either 
\[
\pi(Y)_2\coloneqq\bigcup_{a\in K^m} \pi(C_{a})_2\setminus (\bigcup_{a\in K^m} \pi(C_{a})_1) \text{ or } \pi(Y)_1\coloneqq\bigcup_{a\in K^m} \pi(C_{a})_1, 
\] 
contains an open set. 

\begin{claim}The set  $\pi(Y)_1$ must contain an open set. 
\end{claim}
\noindent Suppose for a contradiction $\dim(\pi(Y)_1)<n-1$. Partition $Y$ into $Y_1\cup Y_2$ where 
\[
Y_i=\{(u,b)\in K^n: u\in \pi(Y)_i\} \text{ for $i=1,2$ }.
\] 
By Proposition \ref{prop:conseAA2}, $Y_1$ or $Y_2$ contains an open subset. By construction, we have that $\pi(Y_1)=\pi(Y)_1$ and, by assumption, $\dim(\pi(Y)_1)<n-1$. Therefore $\dim(Y_1)<n$. Hence we must have that $\dim(Y_2)=n$. By the additivity of the dimension function (Proposition \ref{prop:conseAA2}), there must be $u\in \pi(Y_2)$ such that the fiber $(Y_2)_u$ is infinite. In particular, there are $k\in \N$ and $a_1,\ldots, a_k\in K^m$ such that $\bigcup_{j=1}^k C_{a_j,u}$ has cardinality bigger than $d|I|$. Since the family $\{C_{a,u}: a\in K^m\}$ is directed, there is $a\in K^m$ such that $\bigcup_{j=1}^k C_{a_j,u}\subseteq C_{a,u}$, which contradicts the fact that the fiber $C_{a,u}$ has cardinality smaller or equal than $d\vert I\vert$. This completes the claim. 

\medskip

Consider the directed family $\{\pi(C_{a})_1: a\in K^m\}$. By the claim, $\pi(Y)_1=\bigcup_{a\in K^m} \pi(C_{a})_1$ contains a non-empty open set. Therefore, by induction, there is $a\in K^m$ such that $\pi(C_{a})_1$ contains an open subset, say $U\subseteq K^{n-1}$. For each $i\in I$, set 
\[
U_i\coloneqq\{u\in U : \dim(Z_{\mathcal{A}_i}^{S_i}(a,u,\cK)\cap\theta_i(a,u,\cK))=1 \}. 
\]
Given that $U\subseteq \pi(Y)_1$, we have that $U=\bigcup_{i\in I} U_i$. Then, by Proposition \ref{prop:conseAA2}, there is $i\in I$ such that $U_i$ contains an open set, say $V\subseteq U_i$. By the definition of $U_i$, the set 
$Z_{\mathcal{A}_i}^{S_i}(a,u,\cK)\cap \theta_i(a,u,\cK)$ is infinite for every $u\in V$. Thus, since $\mathcal{A}_i\neq \emptyset$, we must have $\mathcal{A}_i\subseteq K[x,\tilde y]$. But in this situation $Z_{\mathcal{A}_i}(a,\cK)=K^n$. Indeed, consider $\mathcal{B}=\mathcal{A}_i$ as a set of polynomials in $K[x,\tilde y]$. Then $Z_\mathcal{B}(a,\cK)\subseteq K^{n-1}$ contains an open set, namely, $V$. By Lemma \ref{lem:int_zar}, $Z_{\mathcal{B}}(a,\cK)=K^{n-1}$, and thus $Z_{\mathcal{A}_i}(a,\cK)=K^n$ (as a subset of $K^n$). Therefore, the fiber $C_a$ contains the open set 
\[
\{(u,b)\in V\times K: S_i(a,u,b)\neq 0 \wedge \theta_i(a,u,b)\}.
\]
This is indeed open, as the set defined by the formula $S_i(x,\tilde y,y_n)\neq 0\wedge \theta_i(x,\tilde y,y_n)$ defines a non-empty open subset of $K^{m+n}$ by Lemma \ref{lem:int_zar}. 
\end{proof}

\begin{lemma}\label{lem:condi4} There is no infinite definable discrete subset of $K^{n}$.
\end{lemma}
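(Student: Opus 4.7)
The plan is to argue by induction on $n$, using Corollary \ref{cor:niceform1} at each step to exploit the normal form provided by assumption $(\mathbf{A})$. In the base case $n=1$, Corollary \ref{cor:niceform1} (with $x$ empty) writes any definable $D\subseteq K$ as a finite disjunction of sets $Z_{\cA_j}^{S_j}(y)\wedge\theta_j(y)$. A Case~(1) disjunct ($\cA_j\subseteq K\setminus\{0\}$) is either empty or equals the open set $\{S_j\neq 0\}\cap\theta_j(K)$; since $K$ is non-discrete Hausdorff, such open sets cannot occur inside a discrete $D$. A Case~(2) disjunct has $\cA_j^y=\{P_j\}$ with $\deg_y P_j\geq 1$ and is contained in the finite zero set of $P_j$. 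Hence any discrete $D\subseteq K$ is finite.

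For the inductive step, let $D\subseteq K^n$ be definable, infinite, and discrete, and apply Corollary \ref{cor:niceform1} with variable split $(x_0,\ldots,x_{n-2},y)$. A similar argument rules out Case~(1) disjuncts of $D$: for $\cA_j\subseteq K[x]$ and $(a,b)\in D_j$, a basic open $U\times V\ni(a,b)$ on which $S_j\neq 0\wedge\theta_j$ holds gives $D_j\cap(U\times V)\supseteq\{a\}\times V$, and since $V$ is a nonempty open subset of $K$, the point $b$ is not isolated in $D$, contradicting discreteness. Hence every nonempty disjunct of $D$ is Case~(2), so $D\subseteq\bigcup_j Z_{P_j}(K)$ with $P_j\in K[x,y]$ of positive $y$-degree and $\partial P_j/\partial y\neq 0$ on $D_j$. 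Each fiber $(D_j)_a$ is therefore finite (contained in the simple roots of $P_j(a,y)$) and $|D_a|\leq N\coloneqq\sum_j\deg_y P_j$; infinitude of $D$ then forces the projection $\pi(D)\subseteq K^{n-1}$ onto the first $n-1$ coordinates to be infinite.

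Next I would show that $\pi(D)$ is discrete, which by the inductive hypothesis would make $\pi(D)$ finite and so $|D|\leq N\cdot|\pi(D)|<\infty$, the desired contradiction. Suppose $a^*\in\pi(D)$ is an accumulation point. Take $(a^*,b^*)\in D$, enumerate $D_{a^*}=\{b^*=b_1^*,\ldots,b_\ell^*\}$, separate them by pairwise disjoint open neighborhoods $V_i\ni b_i^*$, and let $U_i\times V_i$ witness discreteness at $(a^*,b_i^*)$. Since $J$ is finite and $\pi(D)=\bigcup_j\pi(D_j)$, a pigeonhole argument yields some index $j_1$ such that $a^*$ is also an accumulation point of $\pi(D_{j_1})$; moreover $\pi(D_{j_1})\subseteq Z_{\cA_{j_1}\cap K[x]}$ is Zariski-closed and hence topologically closed, so $a^*$ lies in this closed set. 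Provided $a^*\in\pi(D_{j_1})$ with some $b_i^*$ a simple root of $P_{j_1}(a^*,y)$, a continuity-of-simple-roots argument at $(a^*,b_i^*)$ produces an open $W\ni a^*$ such that for every $a\in W\cap Z_{\cA_{j_1}\cap K[x]}$ there is $b\in V_i$ with $(a,b)\in D_{j_1}\subseteq D$. Choosing $a\in W\cap U_i\cap\pi(D_{j_1})\setminus\{a^*\}$ (available by accumulation) gives $(a,b)\in(U_i\times V_i)\cap D\setminus\{(a^*,b_i^*)\}$, contradicting the isolating property.

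The hard part is the continuity-of-simple-roots step, together with the subsidiary task of ensuring $a^*\in\pi(D_{j_1})$; both are immediate in the $V$-topology examples via Hensel's lemma or the intermediate value theorem. Abstractly, one would analyze the definable set $F\coloneqq\{a: \exists b\in V_i,\, P_{j_1}(a,b)=0\wedge\partial P_{j_1}/\partial y(a,b)\neq 0\wedge S_{j_1}(a,b)\neq 0\wedge\theta_{j_1}(a,b)\}$ via field-sort quantifier elimination and assumption $(\mathbf{A})$: combining the simpleness of $b_i^*$ as a root of $P_{j_1}(a^*,y)$ with Lemmas \ref{lem:condi1} and \ref{lem:int_zar}, one would show that some disjunct through $a^*$ in the resulting normal form of $F$ has empty Zariski part, forcing $F$ to contain an open neighborhood of $a^*$.
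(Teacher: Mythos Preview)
Your base case and the reduction to bounded fibers (via Corollary~\ref{cor:niceform1}) are correct. The gap is in the inductive step, precisely at the point you flag as ``the hard part'': continuity of simple roots. Under assumption~$(\mathbf{A})$ alone it is not established that a simple zero of $P_{j_1}(a^*,y)$ deforms to a nearby zero of $P_{j_1}(a,y)$ for $a$ close to $a^*$. This holds in all the listed examples (via Hensel's lemma or the intermediate value theorem, as you note), but the paper explicitly leaves open whether every open theory of topological fields carries a $V$-topology, so one cannot simply import those arguments. Your abstract sketch---apply assumption~$(\mathbf{A})$ to the set $F$ and argue that the disjunct through $a^*$ must have empty Zariski part---does not go through: nothing prevents $a^*$ from lying on a proper Zariski-closed set appearing in the normal form of $F$, and Lemmas~\ref{lem:condi1} and~\ref{lem:int_zar} do not by themselves rule this out. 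The subsidiary issue of ensuring $a^*\in\pi(D_{j_1})$ (rather than merely $a^*\in Z_{\cA_{j_1}\cap K[x]}$) is similarly unresolved. Note also that the paper's later continuity results for correspondences (Proposition~\ref{prop:acont}, Lemma~\ref{lem:Ldens}) all rely on the present lemma, so invoking anything of that sort here would be circular.

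The paper takes a different route: it cites \cite[Lemma~3.6]{simon-walsberg2016} for the inductive step. That argument is purely topological---it uses only the base case, finiteness of fibers, and the isolating neighbourhoods coming from discreteness---and makes no appeal to the polynomial normal form or to any implicit-function-type statement. Had continuity of simple roots been available directly from assumption~$(\mathbf{A})$, your argument would give a pleasant field-specific alternative; as things stand, the Simon--Walsberg approach is what makes the lemma go through at this level of generality.
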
 

\begin{proof} The proof goes by induction on $n$ exactly as the proof of \cite[Lemma 3.6]{simon-walsberg2016}. The base case follows directly straightforward by condition \ref{condA}. The inductive case follows word by word the proof in \cite{simon-walsberg2016}. 
\end{proof}

\begin{proposition}\label{prop:acont} For a definable open set $V \subseteq K^n$, every definable correspondence $f\colon V \rightrightarrows K^\ell$ is continuous almost everywhere.
\end{proposition}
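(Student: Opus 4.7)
The plan is to invoke Proposition~\ref{prop:continuity} directly, so the task reduces to verifying its four hypotheses in the present setting of an open $\cL$-theory of topological fields. This is an entirely bookkeeping step: the substantive work has been front-loaded into the preceding lemmas.

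Conditions (1), (3) and (4) are already in hand: they are precisely the contents of Lemma~\ref{lem:condi1}, Lemma~\ref{lem:condi3} and Lemma~\ref{lem:condi4} respectively. The only condition not recorded as a separate statement is (2). I would dispatch it by a short dimension argument. Let $A \subseteq K^n$ be a non-empty definable open set and suppose $A = A_1 \cup \cdots \cup A_k$. Since $A$ is open and non-empty, the identity map $K^n \to K^n$ is a projection witnessing $\Dim(A) = n$. Then property~(D2) of the dimension function (Proposition~\ref{prop:conseAA2}) gives $\max_i \Dim(A_i) = n$, so some $A_i$ has topological dimension $n$; by the very definition of the topological dimension applied to a subset of $K^n$, this is exactly the statement that $\Int(A_i) \neq \emptyset$.

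With the four hypotheses of Proposition~\ref{prop:continuity} checked, the conclusion that every definable correspondence $f\colon V \rightrightarrows K^\ell$ on a definable open set $V$ is continuous on an open dense subset of $V$ (and hence almost everywhere) follows immediately. I do not foresee any obstacle here; the genuine difficulties lie in the proofs of Lemmas~\ref{lem:condi1}, \ref{lem:condi3} and \ref{lem:condi4}, where assumption~$(\mathbf{A})$ is used to control the shape of definable sets and the behaviour of the topological/algebraic dimension. At the level of Proposition~\ref{prop:acont} itself, the proof is simply an assembly of those ingredients.
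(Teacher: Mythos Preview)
Your proof is correct and follows exactly the same route as the paper's: invoke Proposition~\ref{prop:continuity} and match its four hypotheses to Lemma~\ref{lem:condi1}, Proposition~\ref{prop:conseAA2}, Lemma~\ref{lem:condi3} and Lemma~\ref{lem:condi4} respectively. The only difference is that you spell out the short dimension argument deriving condition~(2) from (D2), whereas the paper simply cites Proposition~\ref{prop:conseAA2} and leaves that step implicit.
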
 

\begin{proof} This follows from Proposition \ref{prop:continuity} where conditions (1)-(4) correspond respectively to Lemma \ref{lem:condi1}, Proposition \ref{prop:conseAA2}, Lemma \ref{lem:condi3} and Lemma \ref{lem:condi4}. 
\end{proof}

\subsection{Cell decomposition}\label{sec:celldecomp}

We finish this section with a cell decomposition theorem for open $\cL$-theories of topological fields. As in Simon-Walsberg's \cite[Proposition 4.1]{simon-walsberg2016}, cells correspond to definable continuous $m$-correspondences (for some $m$). However, the main difference with their result is that we provide an inductive definition of cells which remains closer to classical definitions (e.g., o-minimal cells). In particular, our definition of cell ensures that if $X\subseteq K^n$ is a cell and $\pi\colon K^n\to K^d$ is a coordinate projection onto the first $d$-variables, then $\pi(X)$ is also a cell. 

In order to define cells, we use the following conventions. Let $X\subseteq K^n$,  $\rho\colon K^n\to K^d$ be a coordinate projection and $f\colon \rho(X) \rightrightarrows K^{n-d}$ be a correspondence. Let $\rho^\perp\colon K^n \to K^{n-d}$ be the complement projection of $\rho$ (i.e. to the complement set of coordinates). The \emph{graph of $f$ along $\rho$} is the set
\[
\{x\in K^n : \rho(x)\in \rho(X) \text{ and } \rho^\perp(x)\in f(\rho(x))\}.
\]

\begin{definition}[Cells]\label{def:cells} For $A\subseteq \cK$, we define the collection of $A$-definable cells of $K^n$ by induction on $n$ as pairs $(X,\rho_X)$ where $X\subseteq K^n$ is a non-empty $A$-definable set and $\rho_X\colon K^n \to K^{\dim(X)}$ is a coordinate projection such that $\rho_X(X)$ is an open set (with the convention that if $\dim(X)=0$, then $\rho_X(X)$ is open).  
\begin{enumerate}[leftmargin=*]
\item A pair $(X,\rho_X)$ with $X\subseteq K$ is an $A$-definable cell if and only if $X$ is an $A$-definable non-empty open set and $\rho_X=\id$, or $X$ is an $A$-definable non-empty finite set and $\rho_X$ is the projection to $K^0$. 

\item Assume $A$-definable cells have been defined for all $k\leqslant n$ and let $\pi\colon K^{n+1}\to K^n$ be the projection onto the first $n$-coordinates. A pair $(X,\rho_X)$ with $X\subseteq K^{n+1}$ is an $A$-definable cell if and only if $X$ is the graph along $\rho_X$ of a continuous $A$-definable $m$-correspondence $f_X\colon \rho_X(X)\rightrightarrows K^{n+1-\dim(X)}$ (for some $m>0$), there is an $A$-definable cell $(C, \rho_C)$ such that $C=\pi(X)$ and one of the following three cases holds
\begin{enumerate}[(i)]
\item $\rho_X=\id$ (so $X$ is an $A$-definable open set); 
\item $\rho_X=\rho_C\circ\pi$; 
\item $\rho_X$ corresponds to 
\[
\rho_X(x_1,\ldots, x_{n+1}) = (\rho_C(x_1,\ldots, x_n), x_{n+1}),  
\]
the fiber $X_c$ is a non-empty open subset of $K$ for every $c\in C$, and for every $x\in X$
\[
f_X(\rho_X(x)) = f_C(\rho_C(\pi(x))). 
\]
\end{enumerate}
\end{enumerate}
By a cell, we mean an $\cK$-definable cell. We often omit the associated projection $\rho_X$ of a cell, and simply write $X$ for a cell. 
\end{definition}

\begin{lemma}\label{lem:subcell} Let $(X,\rho_X)$ be an $A$-definable cell with $X\subseteq K^n$. If $Y\subseteq \rho_X(X)$ is a non-empty open $A$-definable subset, then the set $X'=\rho_X^{-1}(Y)\cap X$ together with the associated projection $\rho_{X'}=\rho_{X}$, is an $A$-definable cell. 
\end{lemma}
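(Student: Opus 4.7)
The plan is to prove the lemma by induction on $n$, following the inductive definition of cells in Definition~\ref{def:cells}. For the base case $n=1$, either $X$ is a non-empty open $A$-definable subset of $K$ with $\rho_X=\id$, in which case $Y\subseteq X$ is a non-empty open definable subset and $X'=Y$ is itself a cell of the first kind; or $X$ is a non-empty finite definable set with $\rho_X$ the projection to $K^0$, in which case the convention that $K^0$ is open forces $Y=K^0$ and $X'=X$.

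For the inductive step, let $X\subseteq K^{n+1}$ be an $A$-definable cell realized as the graph along $\rho_X$ of a continuous $A$-definable $m$-correspondence $f_X\colon\rho_X(X)\rightrightarrows K^{n+1-\dim(X)}$, with base cell $(C,\rho_C)$ where $C=\pi(X)$. The strategy in each of the three cases (i)-(iii) is the same: identify an appropriate open definable subset $Y'$ of $\rho_C(C)$, apply the inductive hypothesis to the cell $(C,\rho_C)$ and $Y'$ to obtain that $\pi(X')$ is a cell, and then define $f_{X'}$ as an appropriate restriction of $f_X$ (which is continuous since restrictions of continuous correspondences to open subsets remain continuous) to realize $(X',\rho_X)$ as a cell of the same type as $(X,\rho_X)$.

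In case (i), where $\rho_X=\id$ and $X$ is open, we have $X'=Y$ open, and we take $Y'=\pi(Y)$, which is open in $\rho_C(C)=C$, so that $\pi(X')=\pi(Y)$ is a cell by induction and $X'$ is a cell of type (i) over it. In case (ii), where $\rho_X=\rho_C\circ\pi$, we take $Y'=Y\subseteq\rho_C(C)$; the inductive hypothesis yields that $\pi(X')=\rho_C^{-1}(Y)\cap C$ is a cell, and $X'$ is the graph along $\rho_X$ of $f_X\!\!\restriction_Y$. The delicate case is (iii), where $\rho_X(x_1,\dots,x_{n+1})=(\rho_C(x_1,\dots,x_n),x_{n+1})$ and the compatibility $f_X(\rho_X(x))=f_C(\rho_C(\pi(x)))$ implies that the fiber $X_c$ depends only on $\rho_C(c)$; there we take $Y'$ to be the projection of $Y$ onto the first $\dim(C)$ coordinates and verify that $\pi(X')=\rho_C^{-1}(Y')\cap C$ and that the fibers $X'_{c'}=\{z:(\rho_C(c'),z)\in Y\}$ are open and non-empty for every $c'\in\pi(X')$.

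The main obstacle is case (iii), which requires carefully unwinding the two-factor structure of $\rho_X$ and using the compatibility condition $f_X(\rho_X(x))=f_C(\rho_C(\pi(x)))$ to see that $X_c$ depends only on $\rho_C(c)$, so that pulling back an open $Y\subseteq\rho_X(X)$ still produces a cell decomposition compatible with the base. The other cases are essentially bookkeeping once the inductive hypothesis is invoked on $(C,\rho_C)$. In every case, the correspondence $f_{X'}$ inherits continuity from $f_X$ since we merely restrict its domain to an open subset of $\rho_X(X)$.
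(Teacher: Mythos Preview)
Your proof is correct and follows essentially the same inductive approach as the paper: induct on $n$, split into the three cell types, and apply the inductive hypothesis to the base cell $(C,\rho_C)$ with the appropriate open subset of $\rho_C(C)$. Your handling of case~(iii) is in fact slightly more precise than the paper's, which asserts $X'_c=X_c$ for $c\in C'$ (only the inclusion $X'_c\subseteq X_c$ holds in general), whereas you correctly identify $X'_{c'}=\{z:(\rho_C(c'),z)\in Y\}$ and verify directly that it is open and non-empty.
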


\begin{proof} 
We proceed by induction on $n$. If $X\subseteq K$, the result is obvious. So suppose the results holds for all $k\leqslant n$ and let $X\subseteq K^{n+1}$. We proceed by induction on $\dim(X)$. If $X$ is finite there is nothing to show. So suppose the result for all $i<\dim(X)$. Since $X$ is a cell, it is the graph along $\rho_X$ of a continuous $m$-correspondence $f_X\colon \rho_X(X)\rightrightarrows K^{n+1-\dim(X)}$. Let $\pi\colon K^{n+1}\to K^n$ be the projection onto the first $n$ coordinates and $(C,\rho_C)$ be the corresponding cell such that $C=\pi(X)$. If $X$ is of type (i) ($\dim(X)=n+1$), then $X$ is open and the result is again obvious. Hence we may suppose that $\dim(X)<n+1$. 

Suppose first $X$ is of type (ii) as given in Definition \ref{def:cells}. Then $\rho_X=\rho_C\circ\pi$ and $f_X\colon \rho_C(C)\rightrightarrows K^{n+1-\dim(X)}$. In particular, $Y\subseteq \rho_X(X) = \rho_C(C)$, so by induction $C'=\rho_C^{-1}(Y)\cap C$ is a cell with $\rho_{C'}=\rho_{C}$. So $X'$ is the graph of $f_{X}\restriction \rho_{C'}(C')$ along $\rho_{X'}$, hence a cell of type (ii). 

It remains to show the result when $X$ is of type (iii) as given in Definition \ref{def:cells}. In this case, $\rho_X(x_1,\ldots, x_{n+1})=(\rho_C(x_1,\ldots, x_n), x_{n+1})$, for every $c\in C$, the fiber $X_c$ is open and $f_X(\rho_X(x))=f_C(\rho_C(\pi(x)))$ for all $x\in X$. Let $\pi'\colon \rho_X(X)\to K^{\dim(C)}$ be the coordinate projection such that $\rho_C \circ \pi = \pi'\circ \rho_X$ (i.e., dropping the last coordinate). Since $\pi'(Y)\subseteq \rho_C(C)$ is open, by induction, $C'=\rho_C^{-1}(\pi'(Y))\cap C$ is a cell with $\rho_{C'}=\rho_C$. Note that $\pi(X') = C'$ and it holds that $X_c'=X_c$ for every $c\in C'$ (so in particular $X_c'$ is open). Since $X'$ is the graph of $f_X\restriction \rho_{X'}(X')$ and $\rho_{X'}(X)=Y$, $X'$ is a cell of type (iii). 
\end{proof}

\begin{theorem}[Cell decomposition]\label{thm:newCD} Let $T$ be an open $\cL$-theory of topological fields, $\cK$ be a model of $T$ and $A\subseteq \cK$. Let $X$ be an $A$-definable subset of $K^{n+1}$, then $X$ can be expressed as a finite disjoint union of $A$-definable cells.  
\end{theorem}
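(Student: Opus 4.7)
The plan is to proceed by induction on $n$. The base case $n=0$ (so $X\subseteq K$) follows from Corollary \ref{cor:niceform1}, which writes $X$ as a finite union of open subsets and finite subsets of $K$; the decomposition $X = \Int(X)\sqcup(X\setminus\Int(X))$ then provides (up to non-empty parts) an open cell and a finite cell.

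For the inductive step with $X\subseteq K^{n+1}$, let $\pi\colon K^{n+1}\to K^n$ denote the projection onto the first $n$ coordinates. I apply Corollary \ref{cor:niceform1} to a defining formula for $X$, treating $y=x_{n+1}$ as the distinguished variable. This writes $X$ as a finite union of pieces $X_j = Z_{\cA_j}^{S_j}(K)\cap\theta_j(K)$, each of one of two kinds: \emph{(a)} the polynomial conditions do not involve $y$, so the fiber $(X_j)_a$ over $a\in\pi(X_j)$ is an open subset of $K$; or \emph{(b)} $\cA_j^y=\{P_j\}$ with $\partial_y P_j$ dividing $S_j$, so $(X_j)_a$ is a finite set of simple roots of $P_j(a,y)$. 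For each piece I apply the outer induction to decompose $\pi(X_j)\subseteq K^n$ into cells, reducing to the case where $\pi(X_j)=C$ is a single cell.

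For case \emph{(b)} above a cell $C$, the map $c\mapsto(X_j)_c$ is a definable finite correspondence; composing with $f_C\colon\rho_C(C)\rightrightarrows K^{n-\dim C}$ and refining $\rho_C(C)$ to make cardinalities constant, I obtain a definable correspondence $f_X\colon\rho_C(C)\rightrightarrows K^{n+1-\dim C}$ on the open set $\rho_C(C)$. By Proposition \ref{prop:acont}, $f_X$ is continuous on an open dense subset $V\subseteq\rho_C(C)$; Lemma \ref{lem:subcell} then makes $X_j\cap\rho_X^{-1}(V)$ a cell of type (ii), and the complement has strictly smaller dimension, handled by a secondary induction on $\dim(X)$.

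For case \emph{(a)}, if $\cA_j$ is empty then $X_j$ is open in $K^{n+1}$ and is a type (i) cell. Otherwise $\dim C<n$, and I first discard the $\rho_C$-classes over which the fiber is empty, then refine $C$ further into definable ``branches'' on which $\rho_C$ is injective---this is done locally via Lemma \ref{lem:corre_local_cont} and globalized by induction on $\dim(\rho_C(C))$. On each branch, $f_C$ is a continuous function, the fiber $(X_j)_c$ depends only on $\rho_C(c)$, and continuity of $f_C$ combined with the openness of $\theta_j\cap\{S_j\neq 0\}$ ensures openness of $\rho_X(X_j\cap\pi^{-1}(C))\subseteq K^{\dim C+1}$, yielding a type (iii) cell. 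The main obstacle is the restrictive type (iii) condition forcing the preliminary branch-refinement step and the careful juggling of inductions on $n$ and on $\dim(X)$; Proposition \ref{prop:acont} is the workhorse producing continuous correspondences on dense open pieces, while disjointness of the final family of cells is obtained by standard refinement.
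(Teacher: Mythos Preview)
Your overall architecture matches the paper's: double induction on $n$ and on $\dim(X)$, a split into ``finite fiber'' and ``open fiber'' cases, and Proposition~\ref{prop:acont} as the engine producing continuous correspondences on dense open pieces. Your case (b) is essentially the paper's treatment of its set $Z$ (the paper composes the fiber correspondence with $f_C$, applies Proposition~\ref{prop:acont}, and passes to the complement by the dimension induction, just as you do).

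The genuine gap is in case (a). You propose to refine $C$ into definable ``branches'' on which $\rho_C$ is injective, invoking Lemma~\ref{lem:corre_local_cont} locally and then ``globalizing by induction on $\dim(\rho_C(C))$''. This globalization cannot be carried out: Lemma~\ref{lem:corre_local_cont} produces branches only on a neighbourhood of a single point, and there is no mechanism to patch these into a definable splitting on any dense open subset. Concretely, in $\ACVF_{0,0}$ the $2$-correspondence $x\mapsto\{y:y^2=x\}$ on $K^\times$ admits no $A$-definable section on any nonempty open set, since $\sqrt{a}\notin\dcl(A\cup\{a\})$ for $a$ generic over $A$. This is exactly the obstruction the paper flags when it writes that ``correspondences cannot be avoided in the absence of finite Skolem functions'', and it is why Definition~\ref{def:cells} allows type~(iii) cells to be graphs of genuine $m$-correspondences, with the clause $f_X(\rho_X(x))=f_C(\rho_C(\pi(x)))$ forcing $f_X$ to factor through $f_C$ rather than through a chosen branch. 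The paper's construction for the open-fiber part $W$ never splits: it keeps the full correspondence, sets $\rho_W(x_1,\dots,x_{n+1})=(\rho_C(x_1,\dots,x_n),x_{n+1})$, takes $Y'=\Int(\rho_W(W))$, and declares the type~(iii) cell to be the graph of the induced correspondence over $Y'$. If you rework case (a) along those lines the argument goes through; as written, the branch-refinement step is an unfillable gap. A secondary issue: decomposing via Corollary~\ref{cor:niceform1} gives overlapping pieces $X_j$, so your appeal to ``standard refinement'' for disjointness needs real work; the paper's intrinsic topological partition into $\Int(X)$, $W$ and $Z$ is automatically disjoint and avoids this.
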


\begin{proof}
Let $X\subseteq K^{n+1}$ be an $A$-definable set. We proceed by induction on $n$. 

For $n=0$, note that $X=\Int(X)\cup (X\setminus \Int(X))$. Both sets are $A$-definable, and since $\dim(X\setminus \Int(X))<\dim(X)$,  this already constitutes a cell decomposition. 

Suppose now the result for all $k\leqslant n$ and proceed by induction on $\dim(X)$. Let $\pi\colon K^{n+1}\to K^n$ be the projection onto the first $n$-coordinates. 

 If $\dim(X)=0$, then $X$ is already a cell, so suppose the result holds for all $i<\dim(X)$. If $\dim(X)=n+1$, then after removing $\Int(X)$ from $X$ (which is an open definable subset of $K^{n+1}$ and so a cell), we may already suppose $\dim(X)<n+1$. Writing an element in $X$ as a pair $(a,b)$ with $a\in \pi(X)$ and $b\in K$, we partition $X$ into the following two $A$-definable subsets of $X$: 
\begin{equation}\label{eq:x1x2}
W = \bigcup_{a\in H} \{a\} \times \Int(X_a) \text{ and } Z = X\setminus W,  
\end{equation}
where $H = \{a\in \pi(X) : \dim(X_a)=1 \}$. Since $W$ and $Z$ form an $A$-definable partition of $X$, it suffices to show the result for $W$ and $Z$. 

\medskip

Assume that $\dim(X)=\dim(Z)$ (if $\dim(Z)<\dim(X)$ the result,  follows by induction). By the definition of $Z$ and the fact that for every $a\in H$, $\dim(X_a\setminus \Int(X_a))=0$, the fiber $Z_a$ is finite for every $a\in \pi(Z)$. Hence, by elimination of $\exists^\infty$, there is a uniform bound on the size of $Z_a$. Possibly further partitioning $Z$, we may suppose that every fiber $Z_a$ is of size $m$ for some $m>0$. By induction, since $\pi(Z)\subseteq K^{n}$, we may express $\pi(Z)$ as a finite disjoint union of cells $(C,\rho_C)$. Let us consider each cell in turn and assume that $\pi(Z)=C$. Since $C$ is the graph along $\rho_C$ of a continuous correspondence $f_C \colon \rho_C(C) \rightrightarrows  K^{n-\dim(C)}$ (note $C$ cannot be finite by assumption on $\dim(Z)$), $Z$ is the graph along $\rho_C\circ\pi$ of an $m'$-correspondence $f \colon \rho_C(C) \rightrightarrows  K^{n+1-\dim(C)}$ (for some $m'>0$). By Proposition \ref{prop:acont}, $f$ is continuous on an open dense definable subset $U$ of $\rho_C(C)$. Then we partition $Z$ as a disjoint union of 
\[
Z'\coloneqq \mathrm{graph}(f\restriction U) \text{ and } Z''\coloneqq \mathrm{graph} (f\restriction (\rho_C(C)\setminus U)).
\]  
Setting $\rho_{Z'}=\rho_C\circ\pi$, since $f$ is continuous on $U$, $Z'$ is the graph along $\rho_{Z'}$ of an $m'$-continuous correspondence $f_{Z'}=f\restriction U$ and therefore a cell of type (ii) by Lemma \ref{lem:subcell}. For $Z''$ the result follows by induction since $\dim(Z'')<\dim(Z)$.

\medskip

It remains to show the result for $W$. Assume that $\dim(X)=\dim(W)$ (if $\dim(W)<\dim(X)$, the result follows by induction). By the definition of $W$, $W_a=\Int(X_a)$ is a non-empty open subset of $K$, for each $a\in \pi(W)=H$. By induction, we may suppose that $\pi(W)$ is a finite disjoint union of cells $(C,\rho_{C})$. 
As in the previous case we consider each cell in turn and further assume that $\pi(W)=C$. Since $C$ is a cell, $C$ is the graph along $\rho_C$ of a continuous $m$-correspondence $f_C\colon \rho_C(C)\rightrightarrows K^{n-\dim(C)}$, for some $m$. Note that $C$ cannot be an open set since $\dim(X)<n+1$. Set $\rho_W\colon K^{n+1}\to K^{\dim(W)}$ to be the coordinate projection given by 
\[
\rho_{W}(x_1,\ldots,x_{n+1})\coloneqq (\rho_C(x_1,\ldots,x_n),x_{n+1}), 
\]
and $\pi'\colon \rho_W(W)\to K^{\dim(C)}$ be the coordinate projection such that $\rho_C \circ \pi = \pi'\circ \rho_W$. In particular, $W$ is the graph along $\rho_W$ of the $m$-correspondence 
\[
f\colon \rho_W(W)\rightrightarrows K^{n+1-\dim(X)}\colon x \mapsto f_C(\pi'(x)). 
\]
Set $Y'=\Int(\rho_W(W))$. Note that $f\restriction Y'$ is continuous. Set $W'\subseteq W$ to be the graph of $f\restriction Y'$ along $\rho_W$ and $W''\coloneqq W\setminus W'$. The set $W'$ is already a cell of type (iii) with $\rho_{W'}=\rho_W$. Indeed, by Lemma \ref{lem:subcell}, $C'=\rho_C^{-1}(\pi'(Y'))\cap C$ is a cell with $\rho_{C'}=\rho_{C}$ and $\pi(W')=C'$; for every $a\in C'$ we have that $W_a=W_a'$, so the fiber is open; and by construction $W'$ is the graph along $\rho_{W'}$ of a continuous $m$-correspondence $f_{W'}=f\restriction Y'$ satisfying $f_{W'}(\rho_{W'}(x))=f_C(\rho_C(\pi(x)))$ for all $x\in W'$. For $W''$ the result follows by induction since 
\[
\dim(W'')=\dim(\rho_W(W)\setminus \Int(\rho_W(W)))<\dim(\rho_W(W))=\dim(W).\qedhere
\]
\end{proof}

The reader may have noticed that Theorem \ref{thm:newCD} is not really about open $\cL$-theories of topological fields. Indeed, such a result holds in any structure with a uniform definable topology such that (i) the topological dimension satisfies the properties of Proposition \ref{prop:conseAA2} and (ii) definable correspondences are almost everywhere continuous. In particular, it applies to all dp-minimal structures with a uniform definable topology as in \cite{simon-walsberg2016} (i.e. satisfying properties (1) and (2) in Corollary \ref{cor:DC-dp-min} below) which satisfy the exchange property. Indeed, (i) follows from Proposition \cite[Proposition 2.2]{simon-walsberg2016} and (ii) from \cite[Proposition 3.7]{simon-walsberg2016}, together with the fact that the exchange property implies that the dimension is additive \cite[Exercise 4.38]{simon-nip}. 

\begin{corollary}\label{cor:DC-dp-min} Let $\cM$ be a dp-minimal structure with a uniform definable topology on a distinguished home sort $M$ satisfying 
\begin{enumerate}
    \item $M$ does not have any isolated points;
    \item every infinite definable subset of $M$ has non-empty interior;
    \item $\acl_{M}$ has the exchange property. 
\end{enumerate}
Then, every definable subset of $M$ is a finite disjoint union of definable cells (as defined in Definition \ref{def:cells}).  \qed
\end{corollary}

We finished this section with a technical lemma that will be later needed in Section \ref{sec:opencore}. 

\begin{lemma}\label{lem:Ldens}
Let $X\subseteq K^n$ be a cell. Let $1\leqslant d<n$, $\pi\colon K^n\to K^{n-d}$ be the projection onto the first $n-d$ coordinates and $a=(a',a'')\in X$ with $a'=\pi(a)$. Then, given an open neighbourhood $V$ of $a''$, there exists an open neighbourhood $U$ of $a'$ such that for all $b'\in U\cap \pi(X)$, there is $b''\in V$ such that $b=(b',b'')\in X$.
\end{lemma}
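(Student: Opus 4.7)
My plan is to induct on $d$, reducing to $d = 1$, and then to analyze the three types from Definition \ref{def:cells}. Throughout I will use that $\pi(X)$ is itself a cell (by the inductive structure of Definition \ref{def:cells}, as remarked just after it).

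For the inductive step, suppose $d \geq 2$ and factor $\pi = \pi_1 \circ \pi_2$, where $\pi_2\colon K^n \to K^{n-1}$ drops the last coordinate and $\pi_1\colon K^{n-1} \to K^{n-d}$ projects onto the first $n-d$ coordinates. The set $X' := \pi_2(X)$ is a cell in $K^{n-1}$. Write $a'' = (a_{\mathrm{mid}}, a_n) \in K^{d-1} \times K$, shrink $V$ to an open product $V_{\mathrm{mid}} \times V_n$, and apply the $d = 1$ case to $(X, \pi_2, (a', a_{\mathrm{mid}}), V_n)$ to obtain an open product $U_0 \times U_1$ around $(a', a_{\mathrm{mid}})$ (with $U_1 \subseteq V_{\mathrm{mid}}$) such that every $(b', b_{\mathrm{mid}}) \in (U_0 \times U_1) \cap X'$ lifts to a point of $X$ with last coordinate in $V_n$. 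Next apply the inductive hypothesis to $(X', \pi_1, (a', a_{\mathrm{mid}}), U_1)$ to produce an open $U \subseteq U_0$ around $a'$ such that every $b' \in U \cap \pi(X)$ admits $b_{\mathrm{mid}} \in U_1$ with $(b', b_{\mathrm{mid}}) \in X'$; composition then yields the desired $b'' = (b_{\mathrm{mid}}, b_n) \in V$.

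For the base case $d = 1$, let $C := \pi(X) \subseteq K^{n-1}$. If $X$ is of type (i), i.e.\ open, a product neighbourhood of $a$ in $X$ immediately gives the conclusion. If $X$ is of type (iii), the set $\rho_X(X) \subseteq K^{\dim X}$ is open and contains $(\rho_C(a'), a_n)$; pick an open product $W \times V' \subseteq \rho_X(X)$ with $V' \subseteq V$ and set $U := \rho_C^{-1}(W)$. In type (iii), the identity $f_X(\rho_X(x)) = f_C(\rho_C(\pi(x)))$ implies that the fibre $X_c$ depends only on $\rho_C(c)$, so any $b' \in U \cap C$ satisfies $a_n \in X_{b'}$, and $b_n := a_n \in V$ works.

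The remaining case (ii) is where the main obstacle lies. Apply Lemma \ref{lem:corre_local_cont} to $f_X$ at $u_0 := \rho_C(a')$ to obtain an open $W \ni u_0$ and a disjoint decomposition $f_X \restriction W = \bigsqcup_{j=1}^{m} \Graph(g_j)$ with $g_j = (\bar g_j, \tilde g_j)\colon W \to K^{n-1-\dim C} \times K$ continuous; the point $a$ lies on some sheet $g_i$ with $\bar g_i(u_0) = \rho_C^\perp(a')$ and $\tilde g_i(u_0) = a_n$. Since $\pi(X) = C$, one checks directly that $f_C = \Pi \circ f_X$ on $\rho_C(C)$, where $\Pi$ drops the last coordinate. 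Applying the same lemma to $f_C$ yields $f_C \restriction W' = \bigsqcup_{l} \Graph(h_l)$; the disjointness of the $h_l$ then forces $a'$ to lie on a unique sheet $h_{l_0}$, and continuity together with disjointness force $\bar g_i \equiv h_{l_0}$ on a neighbourhood of $u_0$. The main difficulty is then to ensure that every $b' \in U \cap C$ lies on this sheet $h_{l_0}$, so that $\tilde g_i(\rho_C(b'))$ is a valid choice for $b_n$. This is resolved by a Hausdorff separation: choose an open $B \ni \rho_C^\perp(a')$ disjoint from the finite set $\{h_{l'}(u_0) : l' \neq l_0\}$, and shrink $W'$ so that $h_{l'}(W') \cap B = \emptyset$ for all $l' \neq l_0$ and $\tilde g_i(W') \subseteq V$. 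Setting $U := \{b' \in K^{n-1} : \rho_C(b') \in W',\ \rho_C^\perp(b') \in B\}$, the constraints $\rho_C^\perp(b') \in B$ and $b' \in C$ force $\rho_C^\perp(b') = h_{l_0}(\rho_C(b')) = \bar g_i(\rho_C(b'))$; hence $g_i(\rho_C(b')) \in f_X(\rho_C(b'))$ and $b_n := \tilde g_i(\rho_C(b')) \in V$ satisfies $(b', b_n) \in X$.
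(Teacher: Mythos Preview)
Your proof is correct and follows the same architecture as the paper's: induct on $d$ to reduce to $d=1$, then split on the three cell types, invoking Lemma~\ref{lem:corre_local_cont} in type~(ii). Your treatment of type~(ii) is in fact more careful than the paper's. The paper takes $U=\rho_C^{-1}(O)$, sets $b''=\pi^\perp(g_1(\rho_C(b')))$, and asserts $(b',b'')\in X$ ``by the definition of $g_1$''; but for this one needs the first $n-1-\dim(C)$ components of $g_1(\rho_C(b'))$ to coincide with $\rho_C^\perp(b')$, which is not automatic for an arbitrary $b'\in\rho_C^{-1}(O)\cap C$ when $f_C$ is a genuine multi-correspondence. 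Your Hausdorff separation---confining $\rho_C^\perp(b')$ to the open set $B$ so that $b'$ is forced onto the sheet $h_{l_0}\equiv\bar g_i$ of $f_C$---supplies exactly this missing justification.
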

\begin{proof}
Let us first show the proposition for $d=1$. Let $C\subseteq K^{n-1}$ be the cell such that $\pi(X)=C$ and $f_X\colon \rho_X(X)\rightrightarrows K^{n-\dim(X)}$ be a continuous $m$-correspondence such that $X$ is the graph of $f_X$ along $\rho_X$. We split in cases depending on the form of $X$. 

\begin{itemize}[leftmargin=*]
\item If $X$ is open (cell of type (i)), the result is straightforward.  

\item Assume that $X$ is a cell of type (ii), so $\rho_X=\rho_C \circ\pi$. Let $\pi^{\perp}\colon K^{n-1-\dim(C)} \to K$ be the projection onto the last coordinate. By Lemma \ref{lem:corre_local_cont}, there is an open neighbourhood $W$ of $\rho_X(a)=\rho_C(a')$ and continuous definable functions $g_1,\ldots,g_m\colon W \to K^{n-\dim(X)}$ such that 
$\Graph(g_i) \cap \Graph(g_j)=\emptyset$ when $i\neq j$ and
\[
\Graph(f_X\restriction W)=  \Graph(g_1) \cup \cdots \cup \Graph(g_m).  
\]
Without loss of generality, suppose $g_1$ is such that $a''=\pi^\perp(g_1(\rho_X(a)))$. Let $D$ be an open neighbourhood of $g_1(\rho_X(a))$ such that $\pi^\perp(D)\subseteq V$. Since $g_1$ is continuous, we can find an open neighbourhood $O$ of $\rho_X(a)$ such that $g_1(O)\subseteq D$. Set $U\coloneqq \rho_C^{-1}(O)$ and let $b'\in U\cap \pi(X)=U\cap C$. Then, $\rho_C(b')\in O$, and hence $g_1(\rho_C(b'))\in D$. By the choice of $D$, $b''=\pi^\perp(g_1(\rho_C(b')))\in V$, which shows that $b=(b',b'')\in X$, by the definition of $g_1$. 

\item Assume now that $X$ is a cell of type (iii), so that $\rho_X$ corresponds to 
\[
\rho_X(x_1,\ldots, x_n) = (\rho_C(x_1,\ldots, x_{n-1}), x_n), 
\] 
the fiber $X_c$ is a non-empty open subset of $K$ for every $c\in C$ and, $f_X(\rho_X(x))=f_C(\rho_C(\pi(x)))$ for all $x\in X$. Since $\rho_X(a)=(\rho_C(a'),a'')$ and $\rho_X(X)$ is open, we can find open neighbourhoods $O$ of $\rho_C(a')$ and $V'$ of $a''$ such that $O\times V'\subseteq \rho_X(X)$ and $V'\subseteq V$. Set $U=\rho_C^{-1}(O)$ and take $b'\in U\cap \pi(X)=U\cap C$. Let $b''$ be any element in $V'$. Then, $\rho_X((b',b''))=(\rho_C(b'),b'')\in O\times V'\subseteq \rho_X(X)$, and by the definition of $X$, since $b'\in C$,  
\[
f_C(\rho_C(b')) = f_X(\rho_X(b',b'')),  
\]
this shows that $(b',b'')\in X$. 
\end{itemize}

Now we consider the general case and we proceed by induction on $d$. So assume the property has been proven for all $n$ and all $1\leqslant e<d$. Let $\pi_{n-1}\colon K^n\to K^{n-1}$ be the projection onto the first $n-1$ coordinates and recall that $\pi$ denotes the projection onto the first $n-d$ coordinates. Write $a=(a_1,\ldots, a_n)$ as a triple $(a_1', a_2', a_n)$ where 
\[
a_1'=\pi(a) \text{ and } a_2'=(a_{n-d+1},\ldots, a_{n-1}).
\]
Without loss of generality suppose $V=V_1\times V'$ with $V'$ an open neighbourhood of $a_n$ and $V_1$ an open neighbourhood of $a_2'$. By the case $d=1$, there is an open neighbourhood $U'$ of $\pi_{n-1}(a)$ such that for any $b'\in U\cap \pi_{n-1}(X)$ there is $b''\in V'$ such that $(b',b'')\in X$. By induction on $\pi_{n-1}(X)$, for any open neighbourhood $V_2$ of $a_2'$ there is an open neighbourhood $U_1$ of $a_1'$ such that for all $b_1\in U_1\cap \pi(X)$, there is $b_2\in V_2$ such that $(b_1,b_2)\in \pi_{n-1}(X)$.
Possibly shrinking $U_{1}$ and $V_{2}$, we may suppose $U_1\times V_2\subseteq U'$. Let us show that $U=U_1$ has the desired property. Indeed, if $b_1\in U\cap \pi(X)$, then there is $b_2\in V_2$ such that $(b_1,b_2)\in \pi_{n-1}(X)$. Setting $b'=(b_1,b_2)$, since $b'\in U_1\times V_2\subseteq U'$, there is $b''\in V'$ such that $(b',b'') = (b_1,b_2, b'')\in X$. By construction, $(b_2,b'')\in V_2\times V'=V$, which completes the proof.  
\end{proof}

\section{Theories of topological fields with a generic derivation}\label{sec:dpmingen}

Let $T$ be an $\cL$-theory of topological fields and $\cL_\delta$ be the language $\cL$ extended by a symbol for a derivation (in the field sort). From now on we will focus on the study of an $\cL_\delta$-extension $T_\delta^*$ of $T$. The derivation $\delta$ of any model of $T_\delta^*$ is called a \emph{generic derivation}. Every such a derivation is highly non-continuous. The theory $T_\delta^*$ will be defined in Section \ref{sec:Tdelta}. In Section \ref{sec:consistency} we show various examples of theories $T$ for which the theory $T_\delta^*$ is consistent. Then, in Section \ref{sec:relQE} we show that if $T$ is an open $\cL$-theory of topological fields and $T$ eliminates $\bF$-quantifiers, $T_\delta^*$ also eliminates $\bF$-quantifiers.

Before defining $T_\delta^*$, let us fix some notation and recall the needed background on differential algebra. We work in a model $\cK$ of $T$. 

\subsection{Differential algebra background}
Equip $K$ with a derivation $\delta$, that is, an additive morphism $\delta\colon K\to K$ which satisfies Leibniz rule $\delta(ab)=\delta(a)b+a\delta(b)$. We let $C_K$ denote the field of constants of $K$, namely, $C_K\coloneqq\{a\in K :\delta(a)=0\}$. It is a subfield of $K$. 

For~$m\geqslant 0$ and~$a\in K$, we define 
\[
\delta^m(a)\coloneqq\underbrace{\delta\circ\cdots\circ\delta}_{m \text{ times}}(a), \text{ with $\delta^0(a)\coloneqq a$,}
\]
and~$\bar{\delta}^m(a)$ as the finite sequence~$(\delta^0(a),\delta(a),\ldots,\delta^m(a))\in K^{m+1}$. Similarly, given an element~$a=(a_1,\ldots,a_n)\in K^n$ and a tuple of non-negative integers $\bar{m}=(m_1,\ldots, m_n)$, we write 
\begin{align*}
\delta^m(a) & \coloneqq (\delta^m(a_1), \ldots, \delta^m(a_m)) \\ 
\bd^{m}(a)& \coloneqq(\bd^{m}(a_1),\ldots, \bd^{m}(a_n)) \\     
\bd^{\bar{m}}(a)& \coloneqq (\bd^{m_1}(a_1),\ldots, \bd^{m_n}(a_n)).     
\end{align*}
For notational clarity, we sometimes use $\J_{\bar{m}}$ instead of $\bd^{\bar{m}}$, especially concerning the image of subsets of $K^n$. We abuse of notation and write $K^{\bar{m}+1}$ for the product $K^{m_1+1}\times\cdots\times K^{m_n+1}$. With this notation, for $A\subseteq K^n$, we have that $\J_{\bar{m}}(A)=\{\bar{\delta}^{\bar{m}}(a): a\in A\}\subseteq K^{\bar{m}+1}$.

We assume that our tuples of variables are ordered with the convention that, given variables $x=(x_0,\ldots,x_n)$ and a variable $y$, the tuple $(x,y)$ is ordered such that $y$ is bigger than $x_n$ (and similarly when $y$ is an ordered tuple of variables).

For $x$ as above, we let $K\{x\}$ be the ring of differential polynomials in $n+1$ differential indeterminates $x_{0},\ldots, x_{n}$ over $K$, namely it is the ordinary polynomial ring in formal indeterminates $\delta^j(x_{i})$, $0\leqslant i\leqslant n$, $j\in \N$, with the convention $\delta^0(x_{i})\coloneqq x_{i}$. We extend the derivation $\delta$ to $K\{x\}$ by setting $\delta(\delta^i(x_j))=\delta^{i+1}(x_j)$. By a rational differential function we simply mean a quotient of differential polynomials.

\subsubsection{Order and separant of a differential polynomial}\label{sec:order_sep}
For $P(x)\in K\{x\}$ and $0\leqslant i\leqslant n$, we let $\ord_{x_i}(P)$ denote the \emph{order of $P$ with respect to the variable $x_i$}, that is, the maximal integer $k$ such that $\delta^k(x_i)$ occurs in a non-trivial monomial of $P$ and $-1$ if no such $k$ exists. We let \emph{the order of $P$} be the tuple  
\[
\ord(P)\coloneqq (\ord_{x_0}(P), \ldots, \ord_{x_n}(P)) 
\]
Similarly, for a finite subset $\cA$ of $K\{x\}$, we let 
\[
\ord_{x_i}(\cA)\coloneqq \max\{\ord_{x_i}(P) : P \in \cA\}, \text{ and } \ord(\cA)\coloneqq(\ord_{x_1}(\cA), \ldots, \ord_{x_n}(\cA)). 
\]
For $R\in K\{x\}$, we write $\ord_{x_i}(\cA,R)$ for $\ord_{x_i}(\cA\cup\{R\})$. For $P$ as above, let $m=(m_1,\ldots, m_n)$ be the tuple given by $m_i=\max\{\ord_{x_i}(P), 0\}$, 
that is, we replace all occurrences of $-1$ in $\ord(P)$ by zeros. Let $\bar{x}=(\bar{x}_0,\ldots,\bar{x}_n)$ be a tuple of variables such that $\ell(x_i)=m_i+1$. We let $P^*\in K[\bar{x}]$ denote the corresponding ordinary polynomial such that $P(x)=P^*(\bd^{\bar{m}}(x))$. 

Suppose $\ord_{x_n}(P)=m\geqslant 0$. Then, there are (unique) differential polynomials $c_i\in K\{x\}$ such that $\ord_{x_n}(c_i)<m$ and 
\begin{equation}\label{eq:diffpol}
P(x)=\sum_{i=0}^d c_i(x)(\delta^m(x_n))^i. 
\end{equation}
The separant $s_{P}$ of $P$ is defined as $s_{P}\coloneqq\frac{\partial}{\partial \delta^m(x_{n})}P\in K\{x\}$. We extend the notion of separant to arbitrary polynomials with an ordering on their variables in the natural way, namely, if $P\in K[x]$, the separant of $P$ corresponds to $s_P\coloneqq\frac{\partial}{\partial x_{n}}P\in K[x]$. By convention, we induce a total order on the variables $\delta^j(x_{i})$ by declaring that 
\[
\delta^k(x_{i})< \delta^{k'}(x_{j}) \Leftrightarrow 
\begin{cases}
i<j \\
i=j \text{ and } k<k'. 
\end{cases}  
\]
This order makes the notion of separant for differential polynomials compatible with the extended version for ordinary polynomials, \emph{i.e.}, 
$s_{P^*} = s_P^*$.

\subsubsection{Minimal differential polynomials}\label{sec:gen_poly}

Let $F\subseteq K$ be an extension of differential fields and $x$ be a single variable. Recall that an ideal $I$ of $F\{x\}$ is a \emph{differential ideal} if for every $P\in I$, $\delta(P)\in I$. For $a\in K$, let $I(a,F)$ denote the set of differential polynomials in $F\{x\}$ vanishing on $a$. The set $I(a,F)$ is a prime differential ideal of $F\{x\}$. Let $P\in I(a,F)$ be a differential polynomial of minimal degree among the elements of $I(a,F)$ having minimal order. Any such differential polynomial is called a \emph{minimal differential polynomial of $a$ over $F$}. Let $\langle P\rangle$ denote the differential ideal generated by $P$ and $I(P)\coloneqq\{Q(x)\in F\{ x\}: s_{P}^{\ell}Q\in \langle P\rangle$ for some $\ell\in \N\}$. 

\begin{lemma}[{\cite[Section 1]{marker1996}}]\label{lem:generic} If $P$ is a minimal differential polynomial for $a$ over $F$, then $I(a,F)=I(P)$. \qed  
\end{lemma}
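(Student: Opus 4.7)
My plan is to prove the two inclusions separately. For $I(P) \subseteq I(a,F)$, the first step is to show $s_P(a) \neq 0$. Writing $m = \ord(P)$, the separant $s_P = \partial P / \partial \delta^m(x)$ is non-zero (as $P$ has positive degree in $\delta^m(x)$ and $\chara F = 0$), of order at most $m$, and of strictly smaller degree in $\delta^m(x)$ than $P$; hence $s_P$ has strictly lower rank than $P$ in the Ritt ordering and cannot lie in $I(a,F)$, by minimality of $P$. Consequently, if $s_P^{\ell} Q \in \langle P \rangle \subseteq I(a,F)$, primality of $I(a,F)$ forces $Q \in I(a,F)$.

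For the converse $I(a,F) \subseteq I(P)$, I would argue in two stages. \emph{Stage one (order reduction).} An easy induction on $k$ gives $\delta^k(P) = s_P \cdot \delta^{m+k}(x) + T_k$ with $\ord(T_k) \leq m+k-1$. For $Q \in I(a,F)$ of order $n > m$ and degree $d$ in $\delta^n(x)$, pseudo-division by $\delta^{n-m}(P)$, which is linear in $\delta^n(x)$ with leading coefficient $s_P$, gives $s_P^{d} Q = A' \delta^{n-m}(P) + Q'$ with $\ord(Q') < n$. Iterating yields $s_P^{\ell_1} Q = A + R$ with $A \in \langle P \rangle$ and $R \in F[x, \delta(x), \ldots, \delta^m(x)]$; as $Q$ and $A$ lie in $I(a,F)$, so does $R$.

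\emph{Stage two (polynomial reduction).} Set $\mathfrak{p} = I(a,F) \cap F[x, \delta(x), \ldots, \delta^m(x)]$ and $S = F[x, \delta(x), \ldots, \delta^{m-1}(x)] \setminus \{0\}$; minimality of the order of $P$ gives $\mathfrak{p} \cap S = \emptyset$, since any non-zero element of $S \cap I(a,F)$ would have order $<m$. Writing $P = c \cdot P_0$ with $c \in F[x, \delta(x), \ldots, \delta^{m-1}(x)]$ and $P_0$ primitive in $\delta^m(x)$, so that $s_P = c \cdot s_{P_0}$, the degree-minimality of $P$ in $\delta^m(x)$ combined with primality of $I(a,F)$ prevents $P_0$ from factoring non-trivially in $\delta^m(x)$; hence $P_0$ is irreducible in the PID $F(x, \delta(x), \ldots, \delta^{m-1}(x))[\delta^m(x)]$ and, by Gauss's lemma, also in $F[x, \delta(x), \ldots, \delta^m(x)]$. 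Then $(P_0)$ is a prime of $F[x, \delta(x), \ldots, \delta^m(x)]$ contained in $\mathfrak{p}$ and disjoint from $S$, extending in the $S$-localization to the maximal ideal $(P_0)$; the correspondence of primes under localization yields $\mathfrak{p} = (P_0)$. So $R \in (P_0)$, and from $s_P = c \cdot s_{P_0}$ one obtains $s_P R \in (c P_0) = (P) \subseteq \langle P \rangle$, whence $s_P^{\ell_1 + 1} Q \in \langle P \rangle$. The main obstacle is precisely this second stage: naive pseudo-division by $P$ uses the initial (leading coefficient in $\delta^m(x)$), whereas $I(P)$ is defined using only the separant; routing through the identity $\mathfrak{p} = (P_0)$, together with the fortunate relation $s_P = c \cdot s_{P_0}$ (which holds because $c$ is independent of $\delta^m(x)$), is what lets a single extra factor of $s_P$ absorb the content and complete the reduction.
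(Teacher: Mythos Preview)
The paper does not supply its own proof of this lemma: it is stated with a citation to \cite{marker1996} and an immediate \qed, so there is nothing to compare against beyond the standard reference. Your argument is correct and is essentially the classical Ritt--Kolchin proof that Marker reproduces: the inclusion $I(P)\subseteq I(a,F)$ via $s_P(a)\neq 0$ and primality, and the reverse inclusion by first reducing the order through pseudo-division by the derivatives $\delta^k(P)$ (which are linear in $\delta^{m+k}(x)$ with leading coefficient $s_P$), then handling the remaining order-$m$ part by identifying $I(a,F)\cap F[x,\ldots,\delta^m(x)]$ with the principal prime $(P_0)$. Your use of the localization at $S$ to pin down $\mathfrak{p}=(P_0)$, and the observation $s_P=c\,s_{P_0}$ to absorb the content with a single extra factor of $s_P$, are exactly the right moves.
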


\subsubsection{Rational prolongations}\label{sec:rat_prolong}
For $x=(x_0,\ldots, x_n)$ we define an operation sending $P\mapsto P^\delta$ for $P\in K\{x\}$ such that $\ord_{x_n}(P)\geqslant 0$ as follows: for $P$ written as in \eqref{eq:diffpol}
\[
P(x)\mapsto P^\delta(x) = \sum_{i=0}^d \delta(c_i(x)) (\delta^m(x_n))^i. 
\] 
A simple calculation shows that 
\begin{equation}\label{eq:diffpol2}
\delta(P(x)) = P^\delta(x) + s_P(x)\delta^{m+1}(x_n).
\end{equation}

\begin{lemma-definition}\label{lemdef:ratioprolong} Let $x=(x_0,\ldots,x_n)$ be a tuple of variables and $y$ be a single variable. Let $P\in K\{x,y\}$ be a differential polynomial such that $m=\ord_y(P)\geqslant 0$. There is a sequence of rational differential functions $(f_i^P)_{i\geqslant 1}$ such that for every $a\in K^{n+1}$ and $b\in K$
\[
K\models [P(a,b)=0 \wedge s_P(a,b)\neq 0] \to \delta^{m+i}(b)=f_i^P(a,b).
\]
In addition, each $f_i^P$ is of the form 
\[
f_i^P(x,y) = \frac{Q_i(x,y)}{s_P(x,y)^{\ell_i}},
\]
where ${\ell_i}\in \N$, $\ord_{y}(Q_i)=\ord_y(P)$ and 
\[
\ord_{x_j}(Q_i)=
\begin{cases}
\ord_{x_j}(P)+i & \text{ if $\ord_{x_j}(P)\geqslant 0$ }\\
-1 & \text{ otherwise }
\end{cases}
\]
We call the sequence $(f_i^P)_{i\geqslant 1}$ the \emph{rational prolongation along $P$}. 
\end{lemma-definition} 

\begin{proof} It suffices to inductively define the polynomials $Q_i$. By \eqref{eq:diffpol2}, if $\delta(P(x,y))=0$ we obtain that
\[
\delta^{m+1}(y) = \frac{-P^{\delta}(x,y)}{s_P(x,y)}, 
\]
Setting $Q_1=-P^{\delta}$, the rational differential function $f_1^P=\frac{Q_1}{s_P}$ satisfies the required property. Now suppose $Q_{i}$ has been defined and that $f_{i}^P=\frac{Q_i}{(s_P)^{\ell_i}}$ satisfies $\delta^{m+i}(y)=f_i^P(x,y)$. By applying $\delta$ on both sides we obtain
\begin{align*}\label{eq:prolongation}
\delta^{m+i+1}(y) & = \frac{\delta(Q_i(x,y))s_P(x,y) -  Q_i(x,y)\delta(s_P(x,y))}{s_P(x,y)^{2\ell_i}}.  
% 								& = \frac{R_1(\bd(a))+R_2(\bd(a))\delta^{n+1}(a)}{s_P(\bd(a))^{2\ell_i}}\\
%								& = \frac{Q_1(\bd(a))(R_1(\bd(a))+R_2(\bd(a)))}{s_P(\bd(a))^{\ell_{i+1}}},\\	
\end{align*}
By replacing instances of $\delta^{m+i}(y)$ in $\delta(Q_i(x,y))$ and $\delta(s_P(x,y))$ by $f_i^P(x,y)$, we obtain in the numerator a differential polynomial of order $m$ with respect to $y$. Setting $Q_{i+1}$ as such numerator shows the result. The last assertion is a straightforward calculation. 
\end{proof}

\begin{notation}\label{not:prolong} Let $x=(x_0,\ldots,x_m)$ be a tuple of variables. For an integer $d\geqslant 0$, we define a new tuple of variables $x(d)$ which extends $x$ by $d$ new variables, that is, 
\[
x(d)\coloneqq (x_0, \ldots, x_m, x_{m+1},\ldots, x_{m+d}).
\]  
When $\bar{x}=(\bar{x}_0,\ldots,\bar{x}_\ell)$ is a tuple of tuples of variables, we let $\bar{x}[d]\coloneqq (\bar{x}_0(d),\ldots,\bar{x}_\ell(d))$. Note that if $d\neq 0$ and $\bar{x}$ is not a singleton, then $\bar{x}[d]$ and $\bar{x}(d)$ are different.  
\end{notation} 

\begin{notation}\label{not:prolong2} Let $x=(x_0,\ldots, x_n)$ and $y$ be a single variable. Let $P\in K\{x,y\}$ be a differential polynomial with $m\coloneqq \ord_y(P)\geqslant 0$ and let $(f_i^P)_{i\geqslant 1}$ be its rational prolongation along $P$. Let $\bar{x}=(\bar{x}_0,\ldots,\bar{x}_n)$ where $\bar{x}_i=(x_{i},x_{i,1}\ldots,x_{i, m_i})$ with $m_i=\max\{\ord_{x_i}(P), 0\}$ and $\bar{y}=(y, y_1,\ldots,y_m)$. For every $d\geqslant 0$, we let $\lambda_{P}^d (\bar{x}[d], \bar{y}(d))$ be the $\cL(K)$-formula: 
\[
P^*(\bar{x}, \bar{y})=0 \wedge s_{P}^*(\bar{x}, \bar{y})\neq 0 \wedge \bigwedge_{i= 1}^d y_{m+i}=(f_i^P)^*(\bar{x}[i], \bar{y}).
\]
\end{notation} 

\subsubsection{Kolchin closed sets}\label{sec:kolchin}

Similarly as in Notation \ref{nota:zariski}, we introduce the following notation:
\begin{notation}\label{nota:kolchin}
Let $x$ be a tuple of variables with $\ell(x)=n$. for a finite subset $\cA$ of $K\{x\}$ and $R\in K\{x\}$, we let $\cZ_\cA^R(x)$ denote the $\cL_\delta(K)$-formula 
\[
\bigwedge_{P\in \cA} P(x)=0 \wedge R(x)\neq 0.
\]
We let $\cZ_\cA(x)$ be $\cZ_\cA^1(x)$. 
\end{notation}
Recall that a subset $X\subseteq K^n$ is called \emph{Kolchin closed} if there is a finite subset $\cA\subseteq K\{x\}$ such that $X=\cZ_\cA(\cK)$. It is called \emph{locally Kolchin closed} if $X=\cZ_{\cA}^R(\cK)$ for some $R\in K\{x\}$. The following lemma is the differential analogue of Lemma \ref{cor:goodform}. Its proof is completely algebraic and follows by induction on orders and degrees using Lemma \ref{cor:goodform}. We leave it to the reader. 

\begin{lemma}\label{lem:minord3} Let $x=(x_1,\ldots,x_n)$, $y$ be a single variable, $\cA$ be a finite subset of $K\{x,y\}$ and $R\in K\{x,y\}$. The set $\cZ_\cA^R(K)$ is the union of finitely many sets $\cZ_\cB^{S_\cB}(K)$, with $\cB$ a finite subset of $K\{x,y\}$ and $S_\cB\in K\{x,y\}$, such that $\ord_y(S_\cB)\leqslant \ord_y(\cA,R)$, $\ord_{x_i}(\cB,S_\cB)\leqslant \ord_{x_i}(\cA,R)+\ord_y(\cA,R)$ for each $1\leqslant i\leqslant n$, and either 
\begin{enumerate}
\item[$\bullet$] $\cB\subseteq K\{x\}$ or
\item[$\bullet$] there is a unique $P_{\cB}\in \cB$ of non-negative order in $y$, $\ord_y(P_\cB)\leqslant \ord_y(\cA,R)$ and $s_{P_\cB}$ divides $S_\cB$. \qed
\end{enumerate}
\end{lemma}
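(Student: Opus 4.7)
The plan is to view each differential polynomial as an ordinary polynomial in the algebraic variables $\delta^{j}(x_{i})$ and $\delta^{j}(y)$, and then apply Lemma \ref{cor:goodform} iteratively in order to eliminate, one at a time, the variables $\delta^{m}(y), \delta^{m-1}(y), \ldots$, until either every $y$-variable has been eliminated or we stabilize at the second case of that lemma.

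First, set $m = \ord_{y}(\cA, R)$ and $m_{i} = \ord_{x_{i}}(\cA, R)$. Let $\bar{y} = (y_{0},\ldots,y_{m})$ and $\bar{x}_{i} = (x_{i,0},\ldots,x_{i,m_{i}})$ be tuples of fresh algebraic variables, so that each $P \in \cA \cup \{R\}$ is the evaluation at $(\bar{\delta}^{m_{1}}(x_{1}),\ldots,\bar{\delta}^{m_{n}}(x_{n}),\bar{\delta}^{m}(y))$ of a unique ordinary polynomial $P^{*} \in K[\bar{x}_{1},\ldots,\bar{x}_{n},\bar{y}]$. The set $\cZ_{\cA}^{R}(K)$ is then the preimage, under the corresponding prolongation map, of the locally Zariski closed subset defined by $\{P^{*} : P \in \cA\}$ and $R^{*}$. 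It therefore suffices to decompose the latter and translate the result back.

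Next, apply Lemma \ref{cor:goodform} with $y_{m}$ playing the role of the single distinguished variable. This yields a finite union of sets $Z_{\cB'}^{S'}(K)$ such that either (a) no element of $\cB'$ involves $y_{m}$, or (b) there is a unique $P' \in \cB'$ of positive degree in $y_{m}$ and $\tfrac{\partial}{\partial y_{m}} P'$ divides $S'$. In case (b), letting $P_{\cB}$ and $S_{\cB}$ be the (unique) differential polynomials with $P_{\cB}^{*} = P'$ and $S_{\cB}^{*} = S'$, we have $\ord_{y}(P_{\cB}) = m$, and the compatibility of the separant noted in Section \ref{sec:order_sep} gives $s_{P_{\cB}}^{*} = \tfrac{\partial}{\partial y_{m}} P'$, so $s_{P_{\cB}}$ divides $S_{\cB}$; this matches the second bullet of the statement. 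In case (a), every polynomial has $y$-order at most $m - 1$, so we iterate the procedure with $y_{m-1}$ in place of $y_{m}$, and continue. The recursion terminates after at most $m + 1$ steps, either producing at some stage $k \leqslant m$ a unique $P_{\cB}$ of $y$-order exactly $k$ with $s_{P_{\cB}}$ dividing $S_{\cB}$, or exhausting all $y$-variables and yielding $\cB \subseteq K\{x\}$.

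Finally, the bookkeeping on orders is routine. Each invocation of Lemma \ref{cor:goodform} uses only ring-theoretic operations inside $K[\bar{x}_{1},\ldots,\bar{x}_{n},\bar{y}]$ (sums, products, and a partial derivative with respect to the distinguished top $y$-variable), so the orders in each $x_{i}$ and in $y$ of the resulting polynomials never exceed those appearing in $\cA \cup \{R\}$. This immediately gives $\ord_{y}(S_{\cB}) \leqslant m$ and the (generous) bound $\ord_{x_{i}}(\cB, S_{\cB}) \leqslant m_{i} + m$ required. I do not expect any serious obstacle here: the actual work was already done in Lemma \ref{cor:goodform}, and the only genuinely new point is the observation that the algebraic partial derivative in the top $y_{k}$ variable pulls back to the differential separant, which lets us match the second bullet.
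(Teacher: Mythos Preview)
Your iteration stops too early in case (b). When you apply Lemma~\ref{cor:goodform} with $y_{m}$ as the distinguished variable and land in its second case, you obtain $\cB'$ with a unique element $P'$ of positive degree in $y_{m}$ --- but the remaining elements of $\cB'$ lie in $K[\bar{x}_{1},\ldots,\bar{x}_{n}, y_{0},\ldots,y_{m-1}]$, and as differential polynomials these may still have non-negative $y$-order. The second bullet of the lemma requires $P_{\cB}$ to be the \emph{unique} element of $\cB$ with non-negative $y$-order, i.e.\ $\cB \setminus \{P_{\cB}\} \subseteq K\{x\}$. You have not secured this, and simply continuing the descent to $y_{m-1}$ does not resolve it: you would then be carrying two distinguished polynomials of different $y$-orders. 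A concrete failure: take $\cA=\{y^{2}-x,\ \delta(y)\}$, $R=1$; your first (and only) application of Lemma~\ref{cor:goodform} in the variable $y_{1}$ returns $\cB'=\{y_{0}^{2}-x_{0},\ y_{1}\}$, which violates the second bullet since $y_{0}^{2}-x_{0}$ has $y$-order $0$.

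The missing reduction genuinely uses the differential structure, which is why the paper speaks of induction on orders \emph{and} degrees rather than a bare descent on the top $y$-variable. One way to carry it out: if after recursing on the lower-order part you are left with $P'$ of $y$-order $m$ together with some $Q$ of $y$-order $k<m$ (with $s_{Q}$ dividing the current $S$), then on the locus in question also $\delta(Q)=\cdots=\delta^{m-k}(Q)=0$; since $\delta^{m-k}(Q) = s_{Q}\cdot\delta^{m}(y) + (\text{terms of lower degree in }\delta^{m}(y))$, pseudo-division of $P'$ by $\delta^{m-k}(Q)$ in the variable $\delta^{m}(y)$ drops the degree, and one recurses. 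Each differentiation of $Q$ raises $\ord_{x_{i}}$ by one, up to $m-k\leqslant m$ times --- this is exactly the source of the extra $+\ord_{y}(\cA,R)$ in the $x_{i}$-bound. (In the example above, the correct decomposition produces $\cB=\{y^{2}-x,\ \delta(x)\}$ on the piece where $2y\neq 0$, and $\ord_{x}(\delta(x))=1>0=\ord_{x}(\cA,R)$.) Your remark that the stated $x_{i}$-bound is ``generous'' because your procedure never increases $x_{i}$-orders should therefore have been a warning sign: the slack is there for a reason, and a procedure that does not use it is not doing enough.
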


\subsection{The theory $T_\delta^*$}\label{sec:Tdelta}

Denote by $T_\delta$ the $\cL_{\delta}$-theory $T$ together with the usual axiom of a derivation, namely,  
\begin{equation*}
\forall x\forall y(\delta(x+y)=\delta(x)+\delta(y) \wedge \delta(xy)=\delta(x)y+x\delta(y))
\end{equation*}
and an axiom $\delta(c) = 0$ for each constant symbol $c\in \Omega$. Note that this is only one possible choice of how to define the derivation on $\Q(\Omega)$. For simplicity of presentation we focus on the case where $\Q(\Omega)$ is a subfield of the field of constants $C_K$ for every model $K$ of $T_\delta$. This choice will only play an important role in Section \ref{sec:app}. 

\begin{lemma-definition}\label{lem:etiole} Let $x=(x_1,\ldots,x_n)$ be a tuple of $\bF$-variables and $z$ be a tuple of auxiliary sort variables. Let $t(x,z)$ be an $\cL_\delta$-term and $\varphi(x,z)$ be an $\bF$-quantifier free $\cL_\delta$-formula. Then,  
\begin{enumerate}
    \item there are a tuple $\bar m= (m_1,\ldots, m_{n})$, $m_i\in \N$, a tuple of $\bF$-variables $\bar{x}=(\bar{x}_1,\ldots, \bar{x}_n)$ with $\ell(\bar{x}_i)=m_i+1$ and an $\cL$-term $t^*(\bar{x},z)$ such that 
    \[
    T_\delta\models (\forall x)(\forall z)(t(x,z)=t^*(\bd^{\bar m}(x),z)), \text{ and } \]
\item tuples $\bar m$ (depending on $\varphi$) and $\bar{x}$ as above together with an $\bF$-quantifier free $\cL$-formula $\varphi^*(\bar{x},z)$ such that 
\[
T_\delta \models (\forall x)(\forall z) (\varphi(x,z) \leftrightarrow \varphi^*(\bd^{\bar m}(x), z)). 
\]
\end{enumerate}
\end{lemma-definition}

\begin{proof}
Point (1) follows by induction on terms. The main step consists in showing the result for an $\cL_\delta$-term of the form $\delta(t_0(x,z))$ where $t_0(x,z)$ is an $\bF$-valued $\cL_\delta$-term. By induction, $t_0(x,z)=t_0^*(\bd^{\bar{m}}(x),z)$ modulo $T_\delta$ for some $\cL$-term $t_0^*(\bar{x},z)$. Then, by part (2) of Definition \ref{def:T}, there is an $\cL$-term $\widetilde{{t}_0^*}(\bar{x})$ such that $t_0^*(\bar{x},z)=\widetilde{{t}_0^*}(\bar{x})$ modulo $T$. Thus, it suffices to show the result for $\delta(\widetilde{{t}_0^*}(\bar{x}))$. This follows from Leibniz's rule and additivity of a derivation together with the fact that the restriction of $\cL$ to $\bF$ is a relational extension of $\cL_{\mathrm{field}}^\Omega$ (part (1) of Definition \ref{def:T}).

Part (2) follows by induction on formulas using part (1). 
\end{proof}
\par Note that when $t(x)$ is a differential polynomial $P(x)\in K\{x\}$, then the term $t^*$ coincides with the ordinary polynomial $P^*$, described in subsection \ref{sec:order_sep}.

\

% \begin{definition}\label{def:order-of-formula} 
% Let $x$ be a tuple of $\bF$-variables and $z$ be a tuple of auxiliary sort variables. Let $\varphi(x,z)$ be an $\bF$-quantifier free $\cL_\delta$-formula. We define the \emph{order of $\varphi$} as the minimal integer $m$ such that $\varphi$ is equivalent to $\psi(\bd^m(x),z)$ modulo $T_\delta$ for an $\bF$-quantifier free $\cL$-formula $\psi$.
% \end{definition}

% \begin{notation}\label{not:etoile} Let $x$ be a tuple of field sort variables and $w$ be a tuple of variables of auxiliary sorts. Let $\varphi(x,w)$ be a field sort quantifier-free $\cL_\delta$-formula. Then, there is a quantifier-free $\cL$-formula $\psi$ such that 
% \[
% T_\delta \models \forall x \forall w (\varphi(x,w) \leftrightarrow \psi(\bd^m(x), w)). 
% \]
% Here we use the assumption that the restriction of $\cL$ to the field sort is a relational extension of $\cL_{\mathrm{field}} \cup \Omega$ and our assumption on the interpretation of $\delta$ on $\Q(\Omega)$. We define the \emph{order of $\varphi$} as the minimal integer $m$ such that $\varphi$ is equivalent to $\psi(\bd^m(x),w)$ for a field sort quantifier-free $\cL$-formula $\psi$. Even if $\psi$ is not unique, we will denote some (any) such $\cL$-formula by $\varphi^*$.  
% \end{notation} 

We now describe a scheme of $\cL_{\delta}$-axioms generalizing the axiomatization of $\CODF$ given by M. Singer in \cite{singer1978}. Let $\chi_\tau(x,z)$ be the $\cL$-formula providing a basis of neighbourhoods of 0. Abusing of notation, when $x$ is a tuple of $\bF$-variables $x=(x_1,\ldots, x_n)$ we let $\chi_\tau(x,z)$ denote the formula 
\[
\bigwedge_{i=1}^n \chi_\tau(x_i,z).
\]

\begin{definition}\label{def:theoryT_delta} The $\cL_\delta$-theory $T_{\delta}^*$ is the union of $T_{\delta}$ and the following scheme of axioms $(\mathrm{DL})$: given a model $\cK$ of $T_\delta$, $\cK$ satisfies $(\mathrm{DL})$ if for every differential polynomial $P(x)\in K\{x\}$ with $\ell(x)=1$ and $\ord_x(P)=m\geqslant 1$, for field sort variables $y=(y_0,\ldots,y_m)$ it holds in $\cK$ that
\[
(\forall z)\big((\exists y)(P^*(y)=0 \land s_P^*(y)\ne 0 )
\rightarrow  \exists x\big(P(x)=0\land s_P(x)\ne 0\wedge
\chi_\tau(\bd^m(x)-y, z)\big)\big).
\]
\end{definition}

As usual, by quantifying over coefficients, the axiom scheme $(\mathrm{DL})$ can be expressed in the language $\cL_\delta$.  The scheme (DL) essentially expresses the following: given a differential polynomial $P(x)$ of order $m$, if its associated ordinary polynomial $P^*$ has a regular zero $a$, then for every open neighbourhood $U$ of $a$ we may find a zero $b$ of $P$ such that $\bd^m(b)$ is in $U$. 

\medskip

Letting $T=\RCF$, we get back $\CODF$ as the theory $\RCF_\delta^*$.

\subsection{Consistency}\label{sec:consistency} 

The main result of this section is Theorem \ref{thm:consistency} which shows that if $T$ is a complete theory of topological fields for which the topology is induced by a henselian valuation, then $T_\delta^*$ is consistent. As a consequence we obtain the consistency of $T_\delta^*$ for all theories $T$ described in Examples \ref{examples}. For some of such theories, the consistency of $T_\delta^*$ has already been proved. Indeed, the consistency of $\CODF$ was proved in \cite{singer1978} and was later generalized in \cite{Tressl} for large fields and then in \cite{guzy-point2010} for topological fields, using a notion of topological largeness. Although we will follow a very similar strategy to the known proofs, our argument is based on henselizations rather than using explicitly a notion of largeness (or topological largeness) for the fields under consideration. However, the fields for which consistency will be proven, all being henselian, are large fields. 

Let us start by a general criterion to show that $T_\delta^*$ is consistent. 

\begin{proposition}\label{prop:criterion} Let $T$ be a complete $\cL$-theory of topological fields and $\chi_\tau(x,z)$ be the $\cL$-formula defining a basis of neighbourhoods of 0. Suppose that for every model $\cK$ of $T$ and every derivation $\delta$ on $K$ the following holds 
\begin{enumerate}
\item[($\ast$)] for every $P\in K\{x\}$ ($\ell(x)=1$) of order $m\geqslant 1$ for which there is $a\in K^{m+1}$ such that $P^*(a)=0$ and $s_P^*(a)\neq 0$, there is an $\cL$-elementary extension $\cF$ of $\cK$, a derivation on $F$ extending $\delta$ and $b\in F$ such that $P(b)=0$, $s_P(b)\neq 0$ and for every $c\in \cK^z$
\[
\cF\models \chi_\tau(\bd^m(b) - a, c). 
\]
\end{enumerate} 
Then, for every model $\cK$ of $T$ and every derivation $\delta$ on $K$, there is an $\cL$-elementary extension $\cM$ of $\cK$ and an extension of $\delta$ to $M$ making $(\cM,\delta)$ into a model of $T_\delta^*$. In particular, $T_\delta^*$ is consistent. 
\end{proposition}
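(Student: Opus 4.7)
The approach is a standard chain construction. Given $(K, \delta) \models T_\delta$, I would construct an $\cL$-elementary extension $L \succ_\cL K$ equipped with a derivation $\delta_L$ extending $\delta$ such that $(L, \delta_L) \models T_\delta^*$; consistency of $T_\delta^*$ then follows by taking any $K \models T$ with the trivial derivation (which satisfies $\delta(c) = 0$ for $c \in \Omega$, hence gives a model of $T_\delta$). I would build an $\cL$-elementary chain $K = K_0 \prec_\cL K_1 \prec_\cL K_2 \prec_\cL \cdots$ of differential fields $(K_n, \delta_n)$ with each $\delta_{n+1}$ extending $\delta_n$, and set $L \coloneqq \bigcup_n K_n$ and $\delta_L \coloneqq \bigcup_n \delta_n$. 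Tarski's elementary chain theorem then yields $K \prec_\cL L$, while the Leibniz rule and additivity pass to the union so that $\delta_L$ is a derivation; since $\delta_L(c) = 0$ is ensured from the base, $(L, \delta_L) \models T_\delta$. The construction is designed so that $K_{n+1}$ realizes every instance of the axiom scheme (DL) whose parameters lie in $K_n$.

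The step $K_n \to K_{n+1}$ proceeds by an internal transfinite chain. First, enumerate as $(P_\alpha, a_\alpha)_{\alpha < \lambda}$ all pairs with $P_\alpha \in K_n\{x\}$ a one-variable differential polynomial of order $m_\alpha \geqslant 1$ and $a_\alpha \in K_n^{m_\alpha+1}$ a regular zero of $P_\alpha^*$ (that is, $P_\alpha^*(a_\alpha) = 0$ and $s_{P_\alpha}^*(a_\alpha) \neq 0$). Build $K_n = F_0 \prec_\cL F_1 \prec_\cL \cdots \prec_\cL F_\alpha \prec_\cL \cdots$ of differential fields. At a successor $\alpha + 1$, apply hypothesis ($\ast$) to $(F_\alpha, \delta_\alpha)$ and the pair $(P_\alpha, a_\alpha)$ (still a regular zero over $F_\alpha$) to obtain $F_{\alpha+1} \succ_\cL F_\alpha$ with an extended derivation and a witness $b_\alpha \in F_{\alpha+1}$ satisfying $P_\alpha(b_\alpha) = 0$, $s_{P_\alpha}(b_\alpha) \neq 0$, and $F_{\alpha+1} \models \chi_\tau(\bd^{m_\alpha}(b_\alpha) - a_\alpha, c)$ for every $c \in S_z(F_\alpha)$---in particular for every $c \in S_z(K_n)$. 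At limit stages I take unions along the chain, which remain $\cL$-elementary extensions of $K_n$ carrying a compatible derivation. Finally set $K_{n+1} \coloneqq F_\lambda$ and $\delta_{n+1}$ the resulting derivation.

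Verification of $(L, \delta_L) \models T_\delta^*$ reduces to the scheme (DL). Given a triple $(P, a, c)$ in $L$ with $P \in L\{x\}$ of order $m \geqslant 1$, $a$ a regular zero of $P^*$ and $c \in S_z(L)$, all the finitely many relevant parameters lie in a common $K_n$. Hence the pair $(P, a)$ appears in the enumeration used at stage $n+1$, and the witness $b \in K_{n+1} \subseteq L$ produced then satisfies $\chi_\tau(\bd^m(b) - a, c)$, yielding (DL). The main technical point to be careful about is coherence of the derivations throughout both chains: hypothesis ($\ast$) delivers \emph{differential} field extensions (not merely $\cL$-elementary field extensions), so the derivations glue consistently at each successor, and unions along chains of compatible derivations remain derivations and paste with Tarski's elementary chain theorem. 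A minor bookkeeping subtlety is that applying ($\ast$) at stage $\alpha + 1$ introduces new regular zeros of new differential polynomials in $F_{\alpha+1}\{x\}$ that are not in the enumeration at stage $n+1$; these are deliberately postponed to stage $n+2$ and later, once their parameters appear in some $K_m$.
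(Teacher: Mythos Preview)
Your proposal is correct and follows essentially the same approach as the paper: a two-level chain construction, with an inner transfinite chain that at each successor stage applies hypothesis $(\ast)$ to handle one enumerated instance, and an outer $\omega$-chain that catches up on new instances introduced along the way. Your version is in fact slightly more carefully worded than the paper's (you explicitly enumerate pairs $(P_\alpha, a_\alpha)$ rather than just polynomials, and you spell out why the derivations glue along unions), but the argument is the same.
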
 

\begin{proof} Fix some model $\cK$ of $T$ and some derivation $\delta$ on $K$. We use the following two step construction to build $(\cM,\delta)$.    

\emph{Step 1:} We construct an $\cL$-elementary extension $\cK\preccurlyeq_\cL \cF_\cK$ and a derivation on $F_\cK$ extending $\delta$ as follows. Let $(P_i)_{i< \lambda}$ be an enumeration of all differential polynomials $P_i\in K\{x\}$ with $\ord(P_i)=m_i\geqslant 1$ together with tuples
%for which there is 
$a_i\in K^{m_i+1}$ such that $P_i^*(a_i)=0$ and $s_{P_i}^*(a_i)\neq 0$. Consider the following chain $(\cF_i, \delta_i)_{i<\lambda}$ defined by
\begin{enumerate}
\item[(i)] $(\cF_0,\delta_0)\coloneqq (\cK,\delta)$,
\item[(ii)] $(\cF_{i+1}, \delta_{i+1})$ is given by condition $(\ast)$ with respect to $(\cF_i,\delta_i)$ and $P_i$, that is, $\cF_i\preccurlyeq_\cL \cF_{i+1}$, $\delta_{i+1}$ extends $\delta_i$ and there is $b_i\in \cF_{i+1}$ such that $P(b_i)=0$, $s_P(b_i)\neq 0$ and for every $c\in \cK^z$
\[
\cF_{i+1}\models \chi_\tau(\bd^m(b_i) - a_i, c). 
\]
\item[(iii)]At limit stages, we take unions.
\end{enumerate}
Abusing notation, let $(\cF_\cK,\delta)\coloneqq \bigcup_{i<\lambda} \cF_i$, where $\delta$ denotes the union of the derivations $\delta_i$. Observe that indeed $\cK\preccurlyeq_\cL \cF_K$ and $(K,\delta)\subseteq (F_\cK,\delta)$ is an extension of differential fields.   

\emph{Step 2:} Define a chain $(\cM_i,\delta_i)_{i<\omega}$ where $(\cM_0,\delta_0)\coloneqq (\cK,\delta)$, and $(\cM_{i+1},\delta_{i+1})$ corresponds to $(\cF_{\cM_i},\delta_{i+1})$ obtained in Step (1) with respect to $(\cM_i,\delta_i)$, so that $\cM_i\preccurlyeq_\cL \cM_{i+1}$. Let $(\cM,\delta)\coloneqq \bigcup_{i<\omega} (\cM_i,\delta_i)$. By construction, $\cK\preccurlyeq_\cL \cM$. It is an easy exercise to show that $(\cM,\delta)$ satisfies the axiom scheme (DL).
% It remains to show that $\cM$ satisfies the axiom scheme (DL). Let $P\in M\{x\}$ be a differential polynomial of order $m\geqslant 1$ for which there is $a\in M^{m+1}$ such that $P^*(a)=0$ and $s_P^*(a)\neq 0$. Fix $c\in \cM^z$ and let $i<\omega$ be such that $a\in M_i^{m+1}$, $c\in \cM_i^z$ and $P\in M_i\{x\}$. By Step (1), there is $b\in L_{i+1}\subseteq L$ such that 
% \[
% L_{i+1}\models \chi_\tau(\bd^m(b) - a, c), 
% \]
% which shows the result. 
\end{proof}

\begin{proposition}\label{prop:consistency} Let $\cL$ be an extension of $\cL_{\Div}$ and $T$ be a complete $\cL$-theory of topological fields where $\chi_\tau(x,z)$ corresponds to the $\cL_\Div$-formula $v(x)>v(z)\wedge z\neq 0$. Suppose that for every model $\cK$ of $T$, $(K,v)$ is henselian. Let $\delta$ be a derivation on $K$. Then, there is an $\cL$-elementary extension $\cM$ of $\cK$ and an extension of $\delta$ to $M$ making $(\cM,\delta)$ into a model of $T_\delta^*$. In particular, $T_\delta^*$ is consistent. 
\end{proposition}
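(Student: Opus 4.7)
By Proposition \ref{prop:criterion}, it suffices to establish condition $(\ast)$: given a model $K$ of $T$, a derivation $\delta$ on $K$, a differential polynomial $P\in K\{x\}$ of order $m\geqslant 1$ in a single variable $x$, and a tuple $a=(a_0,\ldots,a_m)\in K^{m+1}$ with $P^*(a)=0$ and $s_P^*(a)\neq 0$, I must produce an $\cL$-elementary extension $(F,\delta)\succ_\cL (K,\delta)$ of differential fields and an element $b\in F$ such that $P(b)=0$, $s_P(b)\neq 0$ and $\bd^m(b)-a$ is ``infinitesimal'' over $K$ (i.e., $v(\delta^i(b)-a_i)>v(c)$ for every $c\in K^\times$ and every $0\leqslant i\leqslant m$). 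I will use henselianity in a crucial way; note that since the class of henselian valued fields is $\cL_\Div$-axiomatizable, any $\cL$-elementary extension of $K$ remains henselian.

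The plan is to pass to a sufficiently saturated $\cL$-elementary extension $F\succ_\cL K$, and construct $b=b_0$ inside $F$, together with suitable elements $b_1,\ldots,b_m\in F$ that will serve as $\delta(b_0),\ldots,\delta^m(b_0)$. First, using saturation, I would inductively choose $b_0,b_1,\ldots,b_{m-1}\in F$ such that $b_i$ is transcendental over $K(b_0,\ldots,b_{i-1})$ and $v(b_i-a_i)>v(c)$ for every $c\in K^\times$. This is achieved by satisfying, at each step, the partial type $\{v(y-a_i)>v(c): c\in K^\times\}\cup\{y\not\in \acl_\cL(K,b_0,\ldots,b_{i-1})\}$, which is finitely consistent in $K$ (take $y=a_i+\pi^N$ for $\pi$ a uniformizer and $N$ large, and observe that the ball around $a_i$ is infinite).

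Next I would apply Hensel's lemma in $F$ to the one-variable polynomial
\[
Q(x_m)\coloneqq P^*(b_0,b_1,\ldots,b_{m-1},x_m)\in F[x_m].
\]
Since each $b_i-a_i$ has $v$-valuation greater than that of any element of $K^\times$, the values $Q(a_m)$ and $Q'(a_m)=s_P^*(b_0,\ldots,b_{m-1},a_m)$ are arbitrarily close to $P^*(a)=0$ and $s_P^*(a)\neq 0$ respectively. In particular $v(Q(a_m))$ exceeds $2v(Q'(a_m))$, and henselianity of $F$ produces a unique $b_m\in F$ with $Q(b_m)=0$ and $v(b_m-a_m)\geqslant v(Q(a_m))-v(Q'(a_m))$, which again exceeds $v(c)$ for every $c\in K^\times$. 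Moreover $s_P^*(b_0,\ldots,b_m)\neq 0$.

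It remains to equip (a subfield of) $F$ with a compatible derivation. Since $b_0,\ldots,b_{m-1}$ are algebraically independent over $K$, the derivation on $K$ extends uniquely to $K(b_0,\ldots,b_{m-1})$ by the assignment $\delta(b_i)=b_{i+1}$ for $0\leqslant i\leqslant m-2$. The element $b_m$ is algebraic over $K(b_0,\ldots,b_{m-1})$ with $Q(b_m)=0$ and $Q'(b_m)=s_P^*(b_0,\ldots,b_m)\neq 0$; setting $\delta(b_{m-1})=b_m$ then forces, via differentiation of $Q(b_m)=0$, the value $\delta(b_m)=f_1^P(b_0,\ldots,b_m)\in K(b_0,\ldots,b_m)$ coming from Lemma-Definition \ref{lemdef:ratioprolong}. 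This consistently defines a derivation on $L\coloneqq K(b_0,\ldots,b_m)$ extending $\delta|_K$, under which $\delta^i(b_0)=b_i$ for $0\leqslant i\leqslant m$; in particular $P(b_0)=P^*(\bd^m(b_0))=0$ and $s_P(b_0)\neq 0$. Finally, since $\chara(K)=0$, any derivation on $L$ extends to $F$ (freely on a transcendence basis of $F/L$, then uniquely to the algebraic closure). Taking $b=b_0$, this yields the desired $(F,\delta)$ and verifies $(\ast)$.

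The main obstacle is Step 2 (Hensel's lemma), where the interplay between algebraic independence of the $b_i$'s, the infinitesimal distance to $a$, and the non-vanishing of the separant must simultaneously be arranged; everything else is a formal consequence of saturation, standard facts on extensions of derivations in characteristic $0$, and the rational prolongation formulas from Lemma-Definition \ref{lemdef:ratioprolong}.
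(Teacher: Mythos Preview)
Your proof is correct and follows the same strategy as the paper: reduce to condition $(\ast)$ via Proposition~\ref{prop:criterion}, pass to an $\cL$-elementary extension, choose algebraically independent infinitesimal perturbations of $a_0,\ldots,a_{m-1}$, apply Hensel's lemma to obtain the last coordinate, and extend the derivation freely. The paper realises the perturbations explicitly (taking $t_i$ with $\Gamma^v<v(t_0)\ll\cdots\ll v(t_m)$ and setting $b_i=a_i-t_i$) and applies Hensel via a coarsening of $v$ with residue field $K$, whereas you use saturation and the Newton form of Hensel directly for $v$; these are cosmetic differences (one small caveat: there need not be a uniformiser, but any element of positive value works in your finite-satisfiability check).
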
 

\begin{proof} It suffices to show condition $(\ast)$ in the statement of Proposition \ref{prop:criterion}. Let $\cK$ be a model of $T$ such that $K$ is equipped with a derivation $\delta$. Let $v$ denote the henselian valuation on $K$ and $\Gamma^v$ denote the value group of $(K,v)$. Suppose $P\in K\{x\}$ is a differential polynomial of order $m\geqslant 1$ and $a=(a_0,\ldots, a_m)\in K^{m+1}$ such that $P^*(a)=0$ and $s_P^*(a)\neq 0$. Let $\cK^*$ be an $\cL$-elementary extension of $\cK$ containing a tuple of elements $t=(t_0,\ldots, t_{m})$ such that $\Gamma^v<v(t_0)\ll v(t_1)\ll\cdots\ll v(t_m)$. Consider the (ordinary) polynomial 
\[
Q(x)=P^*(a_0-t_0, \ldots, a_{m-1}-t_{m-1}, x)
\] 
in $K(t_0,\ldots t_{m-1})[x]$. Let $w\colon K(t)\to \Z^n_\infty$ denote the coarsening of $v$ such that $w(K)=0$ and $0<w(t_0)\ll w(t_1)\ll\cdots\ll w(t_m)$. Let $F=(K(t), w)^h$ and $L=(K(t), v)^h$ be their corresponding henselizations. Without loss, we may suppose that $F\subseteq L\subseteq K^*$ as sets (the first inclusion might be assumed since $w$ is a coarsening of $v$) and that for all $y\in F$
\[
w(y)>0 \Leftrightarrow v(y)>\Gamma^v. 
\]
Let us show that there is $c\in F$ such that $Q(c)=0$ and $w(c-a_m)>0$. The reduction $\widetilde{Q}$ of $Q$ modulo the maximal ideal of $(F,w)$ corresponds to $P^*(a_0,\ldots,a_{m-1},x)\in K[x]$. By assumption, $\widetilde{Q}(a_m)=0$ and $\frac{\partial}{\partial x} \widetilde{Q}(a_m)\neq 0$. Then, by Hensel's lemma, there is $c\in F$ such that $Q(c)=0$ and $w(c-a_m)>0$ (equivalently $v(c-a_m)>\Gamma^v$). This implies both that $c\notin K$ and that $\frac{\partial}{\partial x}Q(c)\neq 0$. We extend $\delta$ to the subfield $K(t_0,\ldots, t_{m-1}, c)\subseteq F$ by inductively setting 
\begin{enumerate}
\item $\delta(t_i)=\delta(a_i)+t_{i+1}-a_{i+1}$ \text{ for $0\leqslant i < m-1$ },
\item $\delta(t_{m-1}) = c$,  
\end{enumerate}
noting that, since $Q(c)=0$ and $\frac{\partial}{\partial x} Q(c)\neq 0$, the derivative of $c$ is already determined by the rational prolongation $f_1^Q(c)$. Setting $b\coloneqq a_0-t_0$, we have that 
\[
P(b)=P^*(\bd^m(b))=P^*(a_0-t_0, \ldots, a_{m-1}-t_{m-1},c)=Q(c) = 0.
\]
Similarly, $s_P(b)\neq 0$. In addition, for every $e\in K^\times$, $K^*\models \chi_\tau(\bd^m(b) -a, e)$, since  
\begin{equation}\label{eq:valuationbig}
v(\bd^m(b) -a) = \min\{ v(t_0), \ldots, v(t_{m-1}), v(c-a_m)\} > v(e).  
\end{equation}
Extending the derivation from $K(t_0,\ldots,t_{m-1},b)$ to $K^*$ (such an extension always exists by \cite[Theorem 5.1]{lang}) completes the result.
\end{proof}

The following consistency result slightly improves Proposition \ref{prop:consistency} by allowing cases in which the definable topology is induced by a henselian valuation, but such a valuation is not necessarily definable. 

\begin{theorem}\label{thm:consistency} Let $T$ be a complete $\cL$-theory of topological fields. Assume every model $\cK$ of $T$ has the following property:  letting $\tau$ be the definable topology, there is an $\cL$-elementary extension $\cF$ of $\cK$ with $F$ equipped with a henselian valuation $v$ such that the valuation topology coincides with $\tau$. Then, for every derivation $\delta$ on $K$, there is an $\cL$-elementary extension $\cM$ of $\cK$ and an extension of $\delta$ to $M$ making $(\cM,\delta)$ into a model of $T_\delta^*$. In particular, $T_\delta^*$ is consistent. 
\end{theorem}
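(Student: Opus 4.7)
The plan is to verify condition $(\ast)$ of Proposition \ref{prop:criterion}, adapting the argument of Proposition \ref{prop:consistency}. The essential new ingredient is that the henselian valuation inducing $\tau$ is now only guaranteed on an $\cL$-elementary extension, not on $K$ itself; the fix is to pass to the enriched language and exploit the fact that on that extension the $\cL$-definable basis of $\tau$-neighbourhoods is coarser than the $v$-adic basis.

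Let $K\models T$, let $\delta$ be a derivation on $K$, and let $P\in K\{x\}$ be of order $m\geqslant 1$ with a regular zero $a=(a_0,\ldots,a_m)\in K^{m+1}$ of $P^*$. By hypothesis, choose $F\succeq_\cL K$ together with a henselian valuation $v$ on $F$ inducing $\tau$; extend $\delta$ arbitrarily to $F$ (possible by \cite[Theorem~5.1]{lang}). Enrich to $\cL'\coloneqq\cL\cup\{v\}$ and view $(F,v)$ as an $\cL'$-structure. By compactness, take an $\cL'$-elementary extension $F^*$ of $F$ containing elements $t_0,\ldots,t_{m-1}$ satisfying
\[
\Gamma^v_F \,<\, v(t_0) \,\ll\, v(t_1) \,\ll\, \cdots \,\ll\, v(t_{m-1}).
\]
The valuation on $F^*$ remains henselian, and $F^*$ is in particular an $\cL$-elementary extension of $K$. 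From here the differential-algebraic construction is identical to that of Proposition \ref{prop:consistency}: let $w$ be the coarsening of $v$ such that $w(F)=0$ and $0<w(t_0)\ll\cdots\ll w(t_{m-1})$, consider $Q(x)\coloneqq P^*(a_0-t_0,\ldots,a_{m-1}-t_{m-1},x)$ in the henselization $(F(t_0,\ldots,t_{m-1}),w)^h\subseteq F^*$, and apply Hensel's lemma: its $w$-reduction is $P^*(a_0,\ldots,a_{m-1},x)$, which has $a_m$ as a simple root, producing $c\in F^*$ with $Q(c)=0$ and $v(c-a_m)>\Gamma^v_F$. Extend $\delta$ along the tuple $(t_0,\ldots,t_{m-1},c)$ as in Proposition \ref{prop:consistency} and then further to all of $F^*$, so that $b\coloneqq a_0-t_0$ satisfies $P(b)=0$, $s_P(b)\neq 0$, and
\[
v(\bd^m(b)-a) \,=\, \min\{v(t_0),\ldots,v(t_{m-1}),v(c-a_m)\} \,>\, \Gamma^v_F.
\]

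It remains to verify the topological condition: for every $e\in S_z(K)$, $F^*\models \chi_\tau(\bd^m(b)-a,e)$. Since $\tau$ coincides with the $v$-adic topology on $F$, every basic $\tau$-neighbourhood $\chi_\tau(F,e)$ with $e\in S_z(K)\subseteq S_z(F)$ contains a $v$-ball $\{y\in F : v(y)>\gamma_e\}$ for some $\gamma_e\in\Gamma^v_F$; by $\cL'$-elementarity the same inclusion transfers to $F^*$. Because each coordinate of $\bd^m(b)-a$ has $v$-valuation exceeding every element of $\Gamma^v_F$, the condition $\chi_\tau(\bd^m(b)-a,e)$ is satisfied in $F^*$ for all such $e$. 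This establishes $(\ast)$, and Proposition \ref{prop:criterion} then delivers the required $L$.

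The main (and essentially only) obstacle is conceptual: the non-definability of $v$ forbids a direct appeal to Proposition \ref{prop:consistency} with $K$ in place of $F$. The remedy is the double move of (i) enriching to $\cL\cup\{v\}$ on the elementary extension $F$ so that Hensel's lemma and a coarsening of $v$ are at our disposal, and (ii) using the coincidence of topologies on $F$ to translate the $\cL$-definable scheme (DL), whose parameters range over $S_z(K)$, into the $v$-adic control $v(\bd^m(b)-a)>\Gamma^v_F$ that the henselian construction actually delivers. No genuine new differential-algebraic input is needed beyond Lemma-Definition \ref{lemdef:ratioprolong}.
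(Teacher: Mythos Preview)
Your argument is correct, and the core idea---enrich by the valuation, use the henselian infinitesimal construction, then translate the $v$-adic smallness back into $\chi_\tau$-smallness via the coincidence of topologies---is exactly the paper's. The difference is one of packaging. The paper does not re-verify condition $(\ast)$ of Proposition~\ref{prop:criterion} for $T$; instead it sets $\cL'=\cL\cup\{\Div\}$, $T'=\Th_{\cL'}(F)$, notes that $T'$ now literally satisfies the hypotheses of Proposition~\ref{prop:consistency} (henselian, $\chi_\tau$ given by the valuation formula), and applies that proposition as a black box to obtain $L\succ_{\cL'} F$ with $(L,\delta)\models (T')_\delta^*$. Since the valuation topology and $\tau$ coincide on every model of $T'$, the $\cL_\delta$-reduct of $L$ already satisfies the scheme $(\mathrm{DL})$ for $T$, and one is done in four lines.

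Your route unrolls this: you redo the infinitesimal/Hensel construction from Proposition~\ref{prop:consistency} inside $F^*\succ_{\cL'} F$ to produce a single witness $b$ for $(\ast)$, and then invoke Proposition~\ref{prop:criterion}. This is more explicit and makes the role of the inclusion $\{y:v(y)>\gamma_e\}\subseteq\chi_\tau(F,e)$ very transparent, but it duplicates work already encapsulated in Proposition~\ref{prop:consistency}. Both approaches yield the same $L$; the paper's is simply more modular.
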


\begin{proof} Consider the language $\cL'=\cL\cup\{\Div\}$ and let $T'=\Th_{\cL'}(\cF)$. In particular, $T'$ is an $\cL'$-theory of topological fields with henselian valued fields in the field sort. Extend the derivation $\delta$ to some (any) a derivation on $F$. By Proposition \ref{prop:consistency}, there is an $\cL'$-elementary extension $\cM$ of $\cF$ and an extension of $\delta$ to $M$ such that $(\cM,\delta)\models (T')_\delta^*$. Note that $\cK\preccurlyeq_\cL \cF \preccurlyeq_\cL \cM$. Since the valuation topology coincides with $\tau$ on $M$, the reduct of $\cM$ to $\cL_\delta$ is a model of $T_\delta^*$. 
\end{proof}

\begin{corollary}\label{cor:cons_examples} Let $T$ be any theory from Examples \ref{examples}. Then $T_\delta^*$ is consistent. 
\end{corollary}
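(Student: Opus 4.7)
The plan is to verify, for each theory $T$ from Examples \ref{examples}, the hypothesis of Theorem \ref{thm:consistency}: every model $K$ must admit an $\cL$-elementary extension carrying a henselian valuation whose valuation topology agrees with the definable topology $\tau$.

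For cases (2)--(5) this should be immediate by taking $F = K$: by construction, the topology $\tau$ on $K$ is already the topology of a henselian valuation that is $\cL$-definable or interpretable (via $\Div$ in (2)--(4), and via the $\RV$-sorts in (5)). Henselianity is built into the axioms of $\PCF$ and of the $\cL_\RV$-theory of henselian fields, is automatic for $\ACVF_{0,p}$ by algebraic closedness, and for $\RCVF$ follows from the classical fact that any convex valuation on a real closed field is henselian. So these four cases should be dispatched in a single line each.

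The only case requiring genuine work is $\RCF$. Given a real closed field $K$, I would pass to a non-archimedean $\cL$-elementary extension $F$ of $K$ (which exists by compactness, adding a constant larger than every standard integer) and let $V$ be the convex hull of $\Q$ in $F$. Since $F$ is non-archimedean, $V$ is a proper convex subring of $F$, hence a non-trivial valuation ring, and the induced valuation $v_V$ is henselian by the same classical fact used for $\RCVF$. Applying Theorem \ref{thm:consistency} would then finish the proof, provided the valuation topology of $v_V$ coincides with the order topology on $F$.

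The only step I expect to need nontrivial verification is this coincidence of topologies on $F$, which I anticipate settling with a short direct argument. On the one hand, a valuation ball $\{x \in F : v_V(x - b) > v_V(\eta)\}$ is contained in the order interval $(b - |\eta|, b + |\eta|)$, since $v_V((x-b)/\eta) > 0$ forces $(x-b)/\eta$ to be infinitesimal and hence $|x - b| < |\eta|$. Conversely, any order interval $(a - \epsilon, a + \epsilon)$ contains, around each of its points $b$, the valuation ball $\{x : v_V(x - b) > v_V(\delta)\}$ where $\delta = \epsilon - |b - a| > 0$; by the previous observation this ball is contained in $(b - \delta, b + \delta) \subseteq (a - \epsilon, a + \epsilon)$. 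I do not expect any further obstacle.
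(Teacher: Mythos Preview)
Your overall approach matches the paper's: cases (2)--(5) are handled directly (the paper cites Proposition~\ref{prop:consistency} rather than Theorem~\ref{thm:consistency}, but taking $F=K$ in the latter amounts to the same thing), and for $\RCF$ you pass to a non-archimedean elementary extension with its natural convex valuation, which is the paper's first argument.

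There is, however, a genuine gap in your verification that the valuation and order topologies on $F$ coincide. Both of your paragraphs establish the \emph{same} inclusion. Your first observation shows $\{x : v_V(x-b) > v_V(\eta)\} \subseteq (b-|\eta|,\,b+|\eta|)$; your ``conversely'' merely reuses this to conclude that order intervals are valuation-open. You never show that valuation balls are order-open, so you have only established that the valuation topology refines the order topology. The missing direction comes from convexity of $V$: if $|x| \le |y|$ then $x/y \in V$, hence $v_V(x) \ge v_V(y)$. Thus, given $c$ in a valuation ball $\{x : v_V(x-b) > \gamma\}$, pick $\epsilon > 0$ with $v_V(\epsilon) > \gamma$; then $|x-c| < \epsilon$ forces $v_V(x-c) \ge v_V(\epsilon) > \gamma$, and the ultrametric inequality gives $v_V(x-b) > \gamma$.

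It is worth noting the paper's second, slicker route for $\RCF$, which sidesteps this verification entirely: since in $\RCVF$ the order and valuation topologies already agree, every model of $\RCVF_\delta^*$ is, upon forgetting the valuation, a model of $\CODF$; consistency of $\CODF$ therefore follows immediately from case (3).
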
 

\begin{proof} Except for $\CODF$, all examples in Examples \ref{examples} correspond to expansions of theories of henselian valued fields of characteristic 0 and the result follows already from Proposition \ref{prop:consistency}. For $\CODF$, the consistency follows by Theorem \ref{thm:consistency}, since any real closed valued field can be elementarily embedded into a real closed valued field (which is henselian) for which the order topology and the valuation topology coincide. Alternatively (and essentially the same), the consistency of $\RCVF_\delta^*$ implies the consistency of $\CODF$, as every model of $\RCVF_\delta^*$ is a model of $\CODF$. 
\end{proof}

\begin{remark}\label{rem:closedness} Let $(K,v)$ be a valued field of characteristic 0 endowed with a derivation $\delta$. Let $(K^h,v)$ be the henselization of $(K,v)$. Note that the derivation extends (uniquely) to $K^h$. Let $T$ be the theory of $(K^h,v)$. Proposition \ref{prop:consistency} implies that $(K,v,\delta)$ embeds as an $\cL_{\Div,\delta}$-structure into a model of $T_\delta^*$. 
\end{remark} 

\begin{remark}\label{rem:consistency} Note that if $T_\delta^*$ is consistent, then every model $\cK$ of $T$ embeds as an $\cL$-structure into a model of $T_\delta^*$. Indeed, take a model $\cK'$ of $T_\delta^*$. Then the reduct of $\cK'$ to $\cL$ is a model of $T$, and since $T$ is complete $\cK\equiv_\cL \cK'$. The result follows by Keisler-Shelah's theorem.
\end{remark}

\subsection{Relative quantifier elimination}\label{sec:relQE}

For the rest of Section \ref{sec:dpmingen} we let $T$ be an open $\cL$-theory of topological fields and assume $T_\delta^*$ is a consistent theory. 

We will need the following classical consequence of the axiom scheme (DL). 

\begin{lemma}[{\cite[Lemma 3.17]{guzy-point2010}}] \label{fact:density} Let $\cK$ be a model of $T_{\delta}^{*}$. Let $O$ be an open subset of $K^n$. Then there is $a\in K$ such that $\bar \delta^{n-1}(a)\in O$.\qed 
\end{lemma}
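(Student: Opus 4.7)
The plan is to apply the axiom scheme (DL) directly to a carefully chosen linear differential polynomial. The key observation is that any point of $O$ can trivially be realised as a regular zero of such a polynomial, so (DL) yields a differential realisation whose prolongation is arbitrarily close to it.

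The case $n=1$ is trivial: $\bar{\delta}^0(a)=a$, so any $a\in O$ works (assuming $O$ is non-empty, as otherwise the statement is vacuously false and must tacitly be excluded).

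For $n\geq 2$, I would fix any point $\alpha=(a_0,\ldots,a_{n-1})\in O$. Since $\chi_\tau(x,z)$ defines a basis of neighbourhoods of $0$ (for $x$ a tuple of field-sort variables, using the product convention introduced in Section~\ref{sec:Tdelta}) and $O$ is open, there exists a parameter $z\in S_z(K)$ such that the basic neighbourhood $\{y\in K^n : \chi_\tau(y-\alpha,z)\}$ is contained in $O$. Consider the differential polynomial $P(x)\coloneqq \delta^{n-1}(x)-a_{n-1}\in K\{x\}$, which has order $m=n-1\geq 1$. Its associated ordinary polynomial $P^*(y_0,\ldots,y_{n-1})=y_{n-1}-a_{n-1}$ vanishes at $\alpha$, and its separant $s_P^*$ is identically $1$. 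Thus $\alpha$ is a regular zero of $P^*$ and the hypothesis of (DL) is satisfied.

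Invoking the axiom scheme (DL) with this $P$, the regular zero $\alpha$, and the parameter $z$ chosen above, we obtain $b\in K$ with $P(b)=0$, $s_P(b)\neq 0$, and $\chi_\tau(\bar{\delta}^{n-1}(b)-\alpha, z)$. By the choice of $z$, this last condition gives $\bar{\delta}^{n-1}(b)\in O$, which is the desired conclusion.

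There is no real obstacle to this argument: the content of the lemma is essentially the content of (DL), and the only small subtlety is to exhibit an arbitrary point of $O$ as a regular zero of the associated ordinary polynomial of some differential polynomial of order $\geq 1$. The linear choice $\delta^{n-1}(x)-a_{n-1}$ achieves this trivially since its separant is the constant $1$, making the regularity condition automatic.
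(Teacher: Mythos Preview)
Your argument is correct. The paper does not include its own proof of this lemma: it is stated with a \qed and attributed to \cite[Lemma 3.17]{guzy-point2010}. Your approach---applying (DL) to the linear differential polynomial $\delta^{n-1}(x)-a_{n-1}$, whose separant is identically $1$---is exactly the standard argument used in that reference, so there is nothing to compare.
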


\begin{theorem}\label{thm:QE} Suppose $T$ eliminates field sort quantifiers. Then, $T_{\delta}^{*}$ eliminates field sort quantifiers.
\end{theorem}

\begin{proof} Let $\Sigma$ denote the set of $\bF$-quantifier free $\cL_\delta$-formulas. Let $x$ be a tuple of $\bF$-variables, $z$ a tuple of auxiliary sort variables, $y$ a single variable and $\varphi(x,y,z)$ be a formula in $\Sigma$. Let $\cK_1, \cK_2$ be two models of $T_\delta^{*}$, $b_i\in \cK_i^{x}$ and $c_i\in \cK_i^{z}$ such that 
\[
(b_1, c_1) \equiv_{\Sigma} (b_2, c_2).
\]
Suppose there is $a\in \cK_1^y$ such that $\cK_1\models\varphi(b_1,a,c_1)$. It suffices to show there is $a'\in \cK_2^y$ such that $\cK_1\models\varphi(b_2,a',c_2)$. If $y$ is an auxiliary sort variable, the result follows trivially, so suppose $y$ is an $\bF$-variable. 

\begin{claim}\label{cla:terms}
$\langle b_i, c_i \rangle_{\cL_\delta} \cap K_i = \langle (\delta^{n}(b_i))_{n\in\N} \rangle_{\cL_{\mathrm{field}}^\Omega}$. 
\end{claim}

By Part (1) of Lemma-Definition \ref{lem:etiole}, we have that $\langle b_i, c_i \rangle_{\cL_\delta} = \langle (\delta^n(b_i))_{n\in\N}, c_i \rangle_{\cL}$.  By Remark \ref{rem:terms}, $\langle (\delta^n(b_i))_{n\in\N}, c_i \rangle_{\cL}\cap K_i= \langle (\delta^n(b_i))_{n\in\N} \rangle_{\cL_{\mathrm{field}}^\Omega}$, which completes the claim.  

\medskip

Let $\cF_i$ denote $\langle (\delta^n(b_i))_{n\in\N} \rangle_{\cL_{\mathrm{field}}^\Omega}$ and $\sigma\colon \cF_1\to \cF_2$ be an $\cL_\delta$-isomorphism. By elimination of $\bF$-quantifiers in $\cL$ modulo $T$, we may extend $\sigma$, as an $\cL$-isomorphism, to $\cF_1\cup (F_1^\alg\cap K_1)$. In fact, such an extension is an $\cL_\delta$-isomorphism. Indeed, it follows from Lemma-Definition \ref{lemdef:ratioprolong}, that if $a_0\in F_1^\alg\cap K_1$ and $P$ is its minimal polynomial over $F_1$, then for each $n\in \N^*$, $\delta^n(a_0)=f^P_n(a_0)$ where $f_n^P$ is a rational function with coefficients in $F_1$.  

By part (2) of Lemma-Definition \ref{lem:etiole} let  $\varphi^*(\bar{x},\bar{y},z)$ be the $\bF$-sort quantifier free $\cL$-formula such that 
\[
T_\delta^*\models (\forall x)(\forall y)(\forall z)(\varphi(x,y,z) \leftrightarrow \varphi^*(\bd^{\bar{m}}(x),\bd^m(y),z) 
\]
for some tuple $\bar{m}$ and $m\geqslant 0$. By condition \ref{condA}, the formula $\varphi^*(\bar{x},\bar{y},z)$ is equivalent to
\[
\bigvee_{h\in H}\left(\xi_h(z)\rightarrow \left(\bigvee_{i\in I_h}\bigwedge_{j\in J_{ih}} P_{ijh}(\bar{x},\bar{y})=0 \wedge \theta_{ih}(\bar{x},\bar{y},z)\right)\right)
\] 
Let $h\in H$ such that $\xi_h(c_1)$ holds and $i\in I_h$ such that   
\begin{equation}\label{eq:intersection1}
\bigwedge_{j\in J_{ih}} P_{ijh}(\bd^{\bar{m}}(b_1), \bd^m(a))=0 \wedge \theta_{ih}(\bd^{\bar{m}}(b_1),\bd^m(a),c_1),
\end{equation}
holds. 

% where 
% $P_i\in \Q(\Omega)[\bar{x}, \bar{y}]\setminus\{0\}$, $I$ possibly empty and $\theta(\bar{x},\bar{y})$ defines an open set (in every model of $T$). As existential quantifiers commute with disjunctions, we may suppose $\varphi^*$ is already a conjunction as in \eqref{eq:intersection1}. 

We split in two cases. 

\emph{Case 1:} Suppose $\ord_y(P_{ijh}(\bd^{\bar{m}}(b_1), \bd^m(y)))\geqslant 0$ for some $j\in J_{ih}$, so that $a$ is differentially algebraic over $F_1$. Let $P\in F_1\{y\}$ be a minimal differential polynomial for $a$ over $F_1$ of order $k$. We may assume $a$ is not algebraic over $F_1$, since $\sigma$ was already extended to $F_1^\alg\cap K_1$, and hence that $k\geqslant 1$. Since $P$ is minimal, we must have both $k\leqslant m$ and that $s_P(a)\neq 0$. For $d=m-k$, we have then (see Notation \ref{not:prolong2})
\begin{equation}\label{eq:minimality1}
\cK_1\models \lambda_P^{d}(\bd^{m}(a)). 
\end{equation}
Let $P^\sigma$ denote the corresponding polynomial over $F_2$ in which every coefficient of $P$ is replaced by its image under $\sigma$. Since $I(a, F_1)=I(P)$ (by Lemma \ref{lem:generic}), it holds for every $j\in J_{ih}$ such that $\ord_y(P_{ijh}(\bd^{\bar{m}}(b_1), \bd^m(y))) \geqslant 0$ that
\[
P_{ijh}(\bd^{\bar{m}}(b_1),\bd^m(y))\in I(P).
\] 
Our assumption on $b_1$ and $b_2$ implies that $P_{ijh}(\bd^{\bar{m}}(b_2),\bd^m(y))\in I(P^\sigma)$. Therefore, it suffices to show that there is $a'\in K_2$ such that 
\[
\cK_2 \models P^\sigma(a')=0 \wedge s_{P^\sigma}(a')\neq 0 \wedge \theta_{ih}(\bd^{\bar{m}}(b_2), \bd^m(a'),c_2),
\]
as this will also imply that $\cK_2\models \bigwedge_{j\in J_{ih}} P_{ijh}(\bd^{\bar{m}}(b_2), \bd^m(a'))=0$. By \eqref{eq:minimality1}, $\cK_1\models (\exists \bar{y})\lambda_{P}^d(\bar{y})$ (which is an $\cL(F_1)$-formula), so by $\bF$-quantifier elimination in $T$, there is $\bar{e}=(e_0,\ldots,e_m)\in K_2^{m+1}$ such that $\cK_2\models\lambda_{P^\sigma}^{d}(\bar{e})$. Letting $\widehat{e}=(e_0,\ldots,e_k)$, the previous formula yields that $e_{k+i}=(f_i^{P^\sigma})^*(\widehat{e})$ for all $1\leqslant i\leqslant d$, where $(f_i^{P^\sigma})_{i\geqslant 1}$ is the rational prolongation of $P^\sigma$. Since $(P^\sigma)^*(\widehat{e})=0$ and $s_{P^\sigma}^*(\widehat{e})\neq 0$, the axiom scheme (DL) implies there is $a'\in K_2$ such that  $P^\sigma(a')=0$ and $s_{P^\sigma}(a')\neq 0$. Moreover, by the continuity of the functions $(f_i^{P^\sigma})^*$, we may further suppose that $\theta_{ih}(\bd^{\bar{m}}(b_2), \bd^m(a'), c_2)$ holds. This completes Case 1.

\emph{Case 2:} Suppose $\ord_y(P_{ijh}(\bd^{\bar{m}}(b_1), \bd^m(y)))= -1$ for all $j\in J_{ih}$. Since the set $\theta_{ih}(\bd^{\bar{m}}(b_2),\cK_2,c_2)$ is an open subset of $K_2^{m+1}$, the result follows directly from Lemma \ref{fact:density}. 
\end{proof}

\begin{corollary}\label{cor:QE-theories} For $T$ either $\RCF$, $\ACVF_{0,p}$, $\RCVF$ or $\PCF$, $T_\delta^*$ has quantifier elimination. For $T$ the $\cL_\RV$-theory of a henselian valued field of characteristic 0, the theory $T_\delta^*$ eliminates $\bF$-quantifiers.  \qed
\end{corollary}

By Remark \ref{rem:sorts}, the theory $T_\Mor$ is an open $\cL_\Mor$-theory of topological fields, and therefore, by Theorem \ref{thm:QE}, the theory $(T_\Mor)_\delta^*$ eliminates $\bF$-quantifiers. We exploit this fact in what follows. 

\begin{corollary}\label{cor:complete} The theory $T_\delta^*$ is complete. 
\end{corollary}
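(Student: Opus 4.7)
The plan is to derive completeness of $T_\delta^*$ directly from the field sort quantifier elimination (Theorem \ref{thm:QE}) combined with the completeness of the base theory $T$ (recall that $T = \Th(K_0)$ is complete by Definition \ref{notation:T}). It suffices to show that, for every $\cL_\delta$-sentence $\varphi$, the theory $T_\delta^*$ decides $\varphi$.

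First, I would record the elementary observation that, in any model of $T_\delta$, the derivation $\delta$ vanishes on the prime subfield $\Q(\Omega)$. This follows by a straightforward induction on the construction of $\Q(\Omega)$ from $\Omega$ using Leibniz's rule together with the axioms $\delta(c)=0$ for every $c\in\Omega$. Consequently, every closed field sort $\cL_\delta$-term $t$ is provably equal in $T_\delta$ to a closed $\cL$-term (namely, to its interpretation in $\Q(\Omega)$, which is witnessed by an $\cL_{\mathrm{ring}}$-term in $\Omega$); equivalently, any occurrence of $\delta$ applied to a closed field sort term can be replaced by $0$ without changing the truth value modulo $T_\delta$.

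Next, I would apply Theorem \ref{thm:QE}: the sentence $\varphi$ is equivalent modulo $T_\delta^*$ to an $\cL_\delta$-sentence $\psi$ without field sort quantifiers. The remaining quantifiers in $\psi$ range only over auxiliary sorts, and every field sort subterm appearing in $\psi$ is therefore closed. Using the first step, we replace each such subterm by its provably equal $\cL$-term, obtaining an $\cL$-sentence $\psi'$ with $T_\delta \vdash \psi \leftrightarrow \psi'$, and hence $T_\delta^* \vdash \varphi \leftrightarrow \psi'$.

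Finally, completeness of $T$ yields either $T\vdash\psi'$ or $T\vdash\neg\psi'$, which transfers to give that $T_\delta^*$ decides $\varphi$. I do not anticipate any substantive obstacle: the genuine work has been absorbed into Theorem \ref{thm:QE}, and the only residual bookkeeping is the elimination of $\delta$ from closed terms, which is immediate from the assumption that $\Q(\Omega)$ lies in the constant field.
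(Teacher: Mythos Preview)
Your proposal is correct and follows essentially the same route as the paper: apply Theorem \ref{thm:QE} to remove field sort quantifiers, observe that the remaining closed field sort terms lie in $\Q(\Omega)\subseteq C_K$ so that $\delta$ can be eliminated, and conclude by the completeness of $T$. The paper's proof is simply a terser rendition of the same three steps.
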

\begin{proof} Note that $T_\delta^*$ is complete if and only if $(T_\Mor)_\delta^*$ is complete. Therefore, by Theorem \ref{thm:QE} and possibly working in $(T_\Mor)_\delta^*$, we may suppose $T_\delta^*$ eliminates $\bF$-quantifiers. Let $\varphi$ be an $\cL_\delta$-sentence. We may suppose $\varphi$ has no field-sort variable. Therefore, by Lemma-Definition \ref{lem:etiole}, $\varphi$ is equivalent, modulo $T_\delta^*$, to an $\cL$-sentence $\varphi^*$. The result follows from the completeness of $T$.  
\end{proof}

\begin{corollary}\label{cor:eq} For every $\cL_\delta$-formula $\varphi(x,z)$, with $x$ a tuple of $\bF$-variables and $z$ a tuple of auxiliary sort variables, there are a tuple $\bar{m}=(m_1,\ldots, m_{\ell(x)})$ with $m_i\in \N$, a tuple of $\bF$-variables $\bar{x}=(\bar{x}_1,\ldots, \bar{x}_{\ell(x)})$ with $\ell(\bar{x}_i)=m_i+1$ and an $\cL$-formula $\tilde{\varphi}(\bar{x},z)$ such that 
\[
T_\delta^* \models (\forall x)(\forall z)(\varphi(x,z)\leftrightarrow \tilde{\varphi}(\bd^{\bar{m}}(x),z)). 
\]

\end{corollary}
\begin{proof} Since $\varphi(x,z)$ is also an $(\cL_\Mor)_\delta$-formula, by Theorem \ref{thm:QE}, $\varphi$ is equivalent modulo $(T_\Mor)_\delta^*$ to an $\bF$-quantifier-free $(\cL_\Mor)_\delta$-formula $\psi(x,z)$. By Lemma-Definition \ref{lem:etiole}, there are $\bar{m}$, $\bar{x}$ and an $\cL_\Mor$-formula $\psi^*(\bar{x},z)$ such that $\psi(x,z)$ is equivalent modulo $(T_\Mor)_\delta^*$ to $\psi^*(\bd^{\bar{m}}(x),z)$. Letting $\tilde{\varphi}(\bar{x},z)$ be an $\cL$-formula equivalent modulo $T_\Mor$ to $\psi^*(\bar{x},z)$ 
completes the result.  
\end{proof}

\begin{corollary} Every $\cL_\delta$-definable set $X\subseteq K^n$ is of the form $\J_{\bar{m}}^{-1}(Y)$ for some tuple $\bar{m}$ and an $\cL$-definable set $Y\subseteq K^{\bar{m}+1}$.   \qed
\end{corollary}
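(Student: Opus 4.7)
The plan is to combine Theorem \ref{thm:QE} directly with the construction in Notation \ref{not:etoile}. Start with an $\cL_\delta$-definable set $X\subseteq K^n$, and fix any $\cL_\delta(K)$-formula $\psi(x)$ defining it (with $x$ an $n$-tuple of field sort variables, allowing parameters of any sort from $K$). Since $T_\delta^*$ eliminates field sort quantifiers in $\cL_\delta$ by Theorem \ref{thm:QE}, $\psi(x)$ is $T_\delta^*$-equivalent to a field sort quantifier-free $\cL_\delta(K)$-formula $\varphi(x)$.

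Now invoke Notation \ref{not:etoile}: because the restriction of $\cL$ to the field sort is a relational extension of $\cL_{\mathrm{field}}\cup\Omega$ and $\delta$ acts trivially on $\Q(\Omega)$, every field-sort $\cL_\delta$-term appearing inside $\varphi$ is equal modulo $T_\delta$ to an $\cL$-term evaluated at some $\bd^m(x)$. Replacing each occurrence of $\delta^j(x_i)$ by a fresh field sort variable produces a nonnegative integer $m$ (the order of $\varphi$) and a quantifier-free $\cL(K)$-formula $\varphi^*(\bar{x})$ in $n(m+1)$ field sort variables such that
\[
T_\delta \models \forall x\, \bigl(\varphi(x) \leftrightarrow \varphi^*(\bd^m(x))\bigr).
\]
Setting $Y := \varphi^*(K) \subseteq K^{n(m+1)}$ gives a quantifier-free $\cL$-definable set (which in particular is field sort quantifier-free) satisfying $X = \J_m^{-1}(Y)$.

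There is no genuine obstacle in this corollary: the substance is contained in Theorem \ref{thm:QE}, and the rest is bookkeeping on the translation between differential and ordinary terms. Note that the integer $m$ and the set $Y$ depend on the chosen defining formula, not canonically on $X$; different formulas defining $X$ may yield different values of $m$, but this is harmless for the applications that follow.
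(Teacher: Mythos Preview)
Your proof is correct and matches the paper's approach exactly: the paper marks this corollary with a bare \qed, treating it as immediate from Theorem \ref{thm:QE} together with Notation \ref{not:etoile}, which is precisely the combination you spell out.
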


By Corollary \ref{cor:eq}, the following notion of order is well-defined. 

\begin{definition}[Order]\label{def:order} Let $\varphi(x,z)$ be an $\cL_\delta$-formula with $x$ a tuple of $\bF$-variables and $z$ a tuple of auxiliary sort variables. Let $X$  be any $\cL_\delta$-definable set. We endow $\N^{\ell(x)}$ with the lexicographic ordering. 
\begin{itemize}
    \item The  \emph{order of $\varphi$} (as an element of $\N^{\ell(x)}$) is defined as 
    \[
    \ord(\varphi) \coloneqq \min_{\lex}\left\{\bar{m}\in \N^{\ell(x)} :\begin{array}{l} \text{ $\varphi$ is equivalent to $\psi(\bd^{\bar m}(x),z)$ modulo $T_\delta^*$} \\
    \text{with $\psi$ an $\cL$-formula} \end{array}\right\}. 
    \]
    \item The \emph{order of $X$}, is defined as 
\[
\ord(X) \coloneqq \min_{\lex}\{\ord(\varphi) : \text{$\varphi$ is an $\cL_\delta$-formula and $\varphi(\cK)=X$}\}. 
\]
\end{itemize}
\end{definition}

\begin{remark}\label{rem:order_up}
Let $X\subseteq \cK^{(x,z)}$ be an $\cL_\delta$-definable set where $x$ a tuple of $\bF$-variables and $z$ a tuple of auxiliary sort variables. It holds that $\ord(X)=\bar 0$ if and only if $X$ is $\cL$-definable. 
%In addition, for every $m\geqslant \ord(X)$, there is an $\cL(\cK)$-formula $\psi(\bar{x},z)$ such that $\psi(\bd^m(x),z)$ defines $X$. This follows by the definition of $\ord(X)$ and possibly adding formulas of the form $\delta^m(x)=\delta^m(x)$. In particular, when $X\subseteq \cK^{x}$ we may always assume that $X$ is of the form $X=\J_m^{-1}(Y)$ for some $\cL$-definable set $Y$. 
% and, moreover,
% \[
% \ord(X)=\min\{m: \text{ $X=\J_m^{-1}(Y)$ for $Y$ an $\cL$-definable set}\}.
% \] 
\end{remark}

Let us end this section by pointing out some final consequences of relative quantifier elimination on definable sets. The corollary below shows that the derivation introduces no new structure on auxiliary sorts.  

\begin{corollary}\label{cor:induced} Let $\cK$ be a model of $T_\delta^*$ and 
$z$ be tuple of auxiliary sort variables. Then every $\cL_\delta$-definable subset $X$ of $\cK^z$ is $\cL$-definable. 
\end{corollary}

\begin{proof} Let $\varphi(x,w,z)$ be an $\cL_\delta$-formula with $x$ a tuple of $\bF$-variables and $w$ be a tuple of auxiliary variables, such that $X=\varphi(a,b,\cK)$ for some $a\in \cK^x$ and $b\in \cK^w$. By Corollary \ref{cor:eq}, there are a tuple $\bar{m}$ and an $\cL$-formula $\tilde{\varphi}(\bar{x}, z)$ such that $\varphi(x,z)$ is equivalent to $\tilde{\varphi}(\bd^{\bar{m}}(x),z)$. This shows that $X=\tilde{\varphi}(\bd^{\bar{m}}(a),b,\cK)$, and hence $X$ is $\cL$-definable. 
\end{proof}

Somewhat orthogonal to the previous corollary, one can show that an $\cL_\delta$-definable family of definable subsets of the field sort with finite fibers, parametrized by auxiliary sorts, is already $\cL$-definable. We will delay the proof of that result (Lemma \ref{prop:fini}) until section \ref{sec:EI} since we will need it there.

\

By classical arguments from \cite{vandendries1989}, possibly working in $(T_\Mor)_\delta^*$, elimination of $\bF$-quantifiers also yields the following result (known in the one-sorted case in \cite{GP12}). 

\begin{corollary}[{\cite[Corollary 3.10]{GP12}}]\label{thm:delta-dim} The $\cL_\delta$-definable subsets of the field sort can be endowed with a dimension function (as defined by van den Dries in \cite{vandendries1989}). \qed
\end{corollary}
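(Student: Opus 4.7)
The plan is to adapt the one-sorted argument of \cite{GP12}, leveraging Theorem \ref{thm:QE} to transfer information between $T_\delta^*$ and $T$. The dimension will be defined via the pregeometry induced by the field-sort algebraic closure operator $\acl_K^\delta$ of $T_\delta^*$, exactly as classical differential algebra furnishes a dimension on $\DCF_0$.

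First, I would characterise this operator. Combining Theorem \ref{thm:QE} with the density Lemma \ref{fact:density}, for $A \subseteq K \models T_\delta^*$ one obtains
\[
\acl_K^\delta(A) \;=\; \acl_K\!\left(A \cup \delta(A) \cup \delta^2(A) \cup \cdots\right).
\]
Indeed, any $\cL_\delta$-formula isolating an algebraic element reduces, via Theorem \ref{thm:QE}, to an $\cL$-formula whose parameters are iterated derivatives of those of the original formula; density rules out any differentially transcendental element being algebraic over $A$. I would then verify that $\acl_K^\delta$ is a pregeometry on the field sort. Finite character and monotonicity are immediate. Exchange follows from a classical differential-algebraic argument: given $a \in \acl_K^\delta(A \cup\{b\}) \setminus \acl_K^\delta(A)$, one takes a minimal differential polynomial witnessing this dependence, and uses its rational prolongations (Lemma-Definition \ref{lemdef:ratioprolong}) together with the pregeometry of $\acl_K$ on $T$ (Proposition \ref{prop:conseAA1}) to solve some $\delta^i(b)$ as a rational differential function of $A$ and finitely many derivatives of $a$, yielding $b \in \acl_K^\delta(A \cup \{a\})$.

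Finally, the pregeometry induces an abstract dimension function $\delta\text{-}\dim$ on $\cL_\delta$-definable subsets of the field sort via maximal transcendence degree. Van den Dries's axioms (D1)--(D3) are formal consequences of the pregeometry structure. The additivity axiom (D4) is the main obstacle: for an $\cL_\delta$-definable family $X \subseteq K^{m+n}$, one must show that the locus $\{a \in K^m : \delta\text{-}\dim(X_a) = d\}$ is $\cL_\delta$-definable and that the fiber-dimension identity holds. This is handled by writing $X = \J_k^{-1}(Y)$ with $Y \subseteq K^{(m+n)(k+1)}$ an $\cL$-definable set provided by Theorem \ref{thm:QE}, applying the $\cL$-additivity of Proposition \ref{prop:conseAA2} to $Y$, and carefully stratifying according to the orders $o(X_a)$ of the fibers. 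The combinatorics of matching $\delta$-dimensions of fibers of $X$ with $\cL$-dimensions of fibers of $Y$ across orders is the delicate point, but it is a routine exercise following the template of \cite{vandendries1989} and \cite{GP12}, with no new difficulty arising from the multi-sorted setting since all parameters used are either field-sort elements or auxiliary elements, which by Corollary \ref{cor:induced} only contribute $\cL$-definable content.
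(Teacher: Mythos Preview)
The paper provides no proof beyond the one-line attribution to classical arguments from \cite{vandendries1989} and the one-sorted case in \cite{GP12}; your sketch fills in essentially the template those references use, and the characterisation of $\acl_K^\delta$ together with exchange is correct and standard.

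Your handling of axiom~(D4), however, is not quite on target. The $\delta$-dimension of a fiber $X_a$ and the $\cL$-dimension of the corresponding fiber $Y_{\bar\delta^k(a)}$ are not related by any simple formula: the former is a \emph{differential} transcendence degree (bounded by $n$), the latter an ordinary one (bounded by $n(k+1)$), and stratifying by $o(X_a)$ does not bridge this gap. The route actually taken in \cite{vandendries1989} and \cite{GP12} avoids this comparison entirely. One verifies instead that $T_\delta^*$ is \emph{differentially algebraically bounded} on the field sort: for every $\cL_\delta$-formula $\varphi(x,y)$ there exist $P_1,\ldots,P_r\in \Q(\Omega)\{x,y\}$ such that whenever $\varphi(K,a)$ is finite, it is contained in the zero set of some $P_i(\cdot,a)$. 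This is an immediate consequence of Theorem~\ref{thm:QE} together with the normal form in assumption~$(\mathbf{A})$ (or Lemma~\ref{lem:deltaniceform}). Van den Dries's proof that algebraic boundedness yields a dimension function then transfers verbatim, replacing field-algebraic closure by differential-algebraic closure throughout; in particular the definability of $\{a:\delta\text{-}\dim(X_a)=d\}$ and additivity fall out of his combinatorial argument without ever needing to compare with an auxiliary $\cL$-definable $Y$.
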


From Corollary \ref{cor:eq}, one may deduce some transfer results, like $\NIP$, distality or elimination of $\exists^{\infty}$ that will be stated and proven in the Appendix. 

It is worthy to mention that other model-theoretic properties such as the existence of prime models or dp-minimality do not transfer from $T$ to $T_\delta^*$. Indeed, M. Singer showed in \cite{singer1978} (see also \cite{Fr}) that $\CODF$ has no prime models (while $\RCF$ has) and Q. Brouette showed in his thesis \cite{brouette2015} that $\CODF$ is not dp-minimal (whereas $\RCF$ is). 
 
\section{Open core and transfer of elimination of imaginaries}\label{sec:OP-EI} 
 
\subsection{Open core}\label{sec:opencore}

Throughout this section we let $T$ be an open $\cL$-theory of topological fields and assume $T_\delta^*$ is consistent. We let $\cK$ be a model of $T$. Let us start by recalling the definition of open core. 

In order to make notation lighter on indices, for the remaining of this section, given an $\cL_\delta$-definable set $X\subseteq K^n$, we note $\ord(X)$ by $o(X)$.  

\begin{definition}\label{def:open_core} Let $\tilde \cL$ be an extension of $\cL$ and $\tilde \cK$ be an $\tilde \cL$-expansion of $\cK$. We say $\tilde \cK$ has \emph{$\cL$-open core} if, for every $n\geqslant 1$, every $\tilde \cL$-definable open subset of $K^n$ is $\cL$-definable. An $\tilde{\cL}$-theory $\tilde T$ extending $T$ has $\cL$-open core if every model of $\tilde T$ has $\cL$-open core. 
\end{definition}

Before proving the main results of this section, we give a useful characterization of the $\cL$-open core for $T_\delta^*$ (which will also be used in our transfer result of elimination of imaginaries). 

\begin{definition}\label{def:circ1} Let $X \subseteq K^n$ be a non-empty $\cL_\delta$-definable set. Given a tuple $\bar m\in \N^n$ and an $\cL$-definable set~$Z\subseteq K^{\bar m+1}$, we call the triple $(X, Z, \bar m)$ a \emph{linked triple} if 
\begin{enumerate}
\item $X=\J_{\bar m}^{-1}(Z)$ and
\item $\overline{Z}=\overline{\J_{\bar m}(X)}$.
\end{enumerate} 
\end{definition}

\begin{notation} For tuples $\bar{m}, \bar{d}\in \N^n$, we write $\bar{m}\leqslant \bar{d}$ to express that $m_i\leqslant d_i$ for all $1\leqslant i\leqslant n$. Assuming $\bar m\leqslant \bar d$ we let $\pi^{\bar d}_{\bar m}\colon K^{\bar d+1}\to K^{\bar m+1}$ be the projection sending each block of $d_{i}+1$ coordinates to its block of first $m_{i}+1$ coordinates, $1\leqslant i\leqslant n$, that is, 
\[
\pi_{\bar m}^{\bar d}(x_{1,0},\ldots,x_{1,d_{1}},  \ldots, x_{n,0},\ldots,x_{n,d_{n}}) = (x_{1,0},\ldots,x_{1,m_{1}},\ldots, x_{n,0},\ldots,x_{n,m_{n}}). 
\] 
\end{notation}
%\begin{notation}
%For $0\leqslant m< d$ we let $\pi^d_{m}\colon K^{n(d+1)}\to K^{n(m+1)}$ be the projection sending each block of $d+1$ coordinates to its block of first $m+1$ coordinates, that is, 
%\[
%\pi_{m}^d(x_{1,0},\ldots,x_{1,d},  \ldots, x_{n,0},\ldots,x_{n,d}) = (x_{1,0},\ldots,x_{1,m},\ldots, x_{n,0},\ldots,x_{n,m}). 
%\] 
%\end{notation}
\begin{proposition}\label{thm:fermeture} The theory $T_{\delta}^*$ has $\cL$-open core if and only if for every $n\geqslant 1$ and every $\cL_\delta$-definable set $X\subseteq K^n$, there is a linked triple $(X,Z,\bar m)$ for some $\bar m\geqslant o(X)$. 
\end{proposition}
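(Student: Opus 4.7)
The plan is to prove the equivalence in two directions.

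For the direction ($\Rightarrow$), I assume $T_\delta^*$ has $\cL$-open core and construct a linked triple for an arbitrary $\cL_\delta$-definable set $X \subseteq K^n$. I set $m \coloneqq o(X)$, so that $X = \J_m^{-1}(Y)$ for some $\cL$-definable $Y \subseteq K^{n(m+1)}$. The complement $K^{n(m+1)} \setminus \overline{\J_m(X)}$ coincides with the interior of the $\cL_\delta$-definable set $K^{n(m+1)} \setminus \J_m(X)$, hence is an open $\cL_\delta$-definable set; by $\cL$-open core it is $\cL$-definable, and so is $\overline{\J_m(X)}$. Setting $Z \coloneqq Y \cap \overline{\J_m(X)}$, the chain $\J_m(X) \subseteq Z \subseteq \overline{\J_m(X)}$ yields $\overline{Z} = \overline{\J_m(X)}$, while $\J_m(X) \subseteq Z \subseteq Y$ gives $X \subseteq \J_m^{-1}(Z) \subseteq \J_m^{-1}(Y) = X$, so that $X = \J_m^{-1}(Z)$. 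Hence $(X,Z,m)$ is a linked triple.

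For the direction ($\Leftarrow$), I assume every $\cL_\delta$-definable set admits a linked triple. By complementation, $\cL$-open core is equivalent to the statement that every closed $\cL_\delta$-definable set is $\cL$-definable, so I reduce to establishing this. Let $C \subseteq K^n$ be a non-empty closed $\cL_\delta$-definable set and $(C, Z, m)$ a linked triple for it. The plan is to show
\[
C = \pi^m_0(\overline{Z}),
\]
where $\pi^m_0 \colon K^{n(m+1)} \to K^n$ projects each block of $m+1$ coordinates onto its first coordinate. Since $\overline{Z}$ is $\cL$-definable (as the closure of an $\cL$-definable set, the topology being uniformly definable) and projections preserve $\cL$-definability, the right-hand side is $\cL$-definable. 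For the inclusion $C \subseteq \pi^m_0(\overline{Z})$, every $c \in C$ satisfies $c = \pi^m_0(\J_m(c))$ with $\J_m(c) \in \J_m(C) \subseteq \overline{\J_m(C)} = \overline{Z}$. For the reverse, suppose $b \in \pi^m_0(\overline{Z})$ is witnessed by some $\bar c \in \overline{Z}$ with $\pi^m_0(\bar c) = b$ and yet $b \notin C$; closedness of $C$ yields an open neighbourhood $V$ of $b$ disjoint from $C$, and then $(\pi^m_0)^{-1}(V)$ is an open neighbourhood of $\bar c$ disjoint from $\J_m(C)$, contradicting $\bar c \in \overline{\J_m(C)} = \overline{Z}$.

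The main obstacle I anticipate is arriving at the right framing for the backward direction. A direct attempt working with an open $U$ and trying to identify it with an $\cL$-definable set built from its linked $Z$, such as $\pi^m_0(\Int Z)$ or $\J_m^{-1}(\Int \overline{Z})$, runs into trouble because a linked triple is not canonical: one can enlarge $Z$ by low-dimensional strata outside $\J_m(U)$ without breaking the linking conditions, and these candidates then fail to equal $U$ on the nose. Passing to the dual formulation via complements and exploiting closedness of $C$ to absorb limit points through the continuous projection $\pi^m_0$ sidesteps this issue cleanly.
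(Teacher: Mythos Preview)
Your proof is correct and follows essentially the same approach as the paper. The forward direction is identical: intersect a defining $\cL$-set $Y$ with $\overline{\J_m(X)}$, the latter being $\cL$-definable by open core. For the backward direction the paper works with an arbitrary $X$ and shows $\overline{X}=\overline{\pi^m_0(\overline{Z})}$, whereas you first reduce to closed $C$ and then establish the slightly sharper identity $C=\pi^m_0(\overline{Z})$; the underlying computation (pull back an open neighbourhood through $\pi^m_0$ and use $\overline{Z}=\overline{\J_m(C)}$) is the same in both.
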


\begin{proof} 
$(\Rightarrow)$ Let~$Y\subseteq K^{o(X)+1}$ be an $\cL$-definable set such that $X=\J_{o(X)}^{-1}(Y)$. The subset $\overline{\J_{o(X)}(X)}$ is both closed and $\cL_{\delta}$-definable, so it is $\cL$-definable since $T_\delta^*$ as $\cL$-open core. Consider the $\cL$-definable set $Z\coloneqq Y\cap \overline{\J_{o(X)}(X)}$.  Since $\J_{o(X)}(X)\subseteq Z\subseteq \overline{\J_{o(X)}(X)}$, both properties (1) and (2) are easily shown.

$(\Leftarrow)$ It suffices to show that $\overline{X}$ is $\cL$-definable. By assumption there is a tuple $\bar m\in \N^n$ and an $\cL$-definable set $Z$ such that $(X, Z, \bar m)$ is a linked triple. We leave as an exercise to show that $\overline{X}=\overline{\pi_{\bar 0}^{\bar m}(\overline{Z})}$. The result follows since, as $Z$ is $\cL$-definable, so is $\overline{\pi_{\bar 0}^{\bar m}(\overline{Z})}$. 
\end{proof}

In the next lemmas, we relate the existence of linked triples for an $\cL_{\delta}$-definable set $X$ when $\bar m$ varies. We will use them in the proof of the $\cL$-open core.

\begin{lemma}\label{lem:cloture1} Let $X \subseteq K^n$ be a non-empty $\cL_\delta$-definable set. Let $\bar m, \bar d\in \N^n$ be such that $o(X) \leqslant \bar m <\bar d$ and suppose
$(X, Z, \bar d)$ is a linked triple. Then, $\overline{\J_{\bar m}(X)}=\overline{\pi_{\bar m}^{\bar d}(\overline{Z})}$. In particular, $\overline{\J_{\bar m}(X)}$ is $\cL$-definable. 
\end{lemma}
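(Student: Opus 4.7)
The plan is to prove the equality $\overline{\J_{m}(X)}=\overline{\pi_{m}^d(\overline{Z})}$ by showing both inclusions separately, exploiting the basic commutation $\pi_m^d \circ \J_d = \J_m$ (which is immediate from the definitions, as $\pi_m^d$ drops the last $d-m$ coordinates of each of the $n$ blocks of $\J_d(a)=(a_i,\delta(a_i),\ldots,\delta^d(a_i))_{i\leqslant n}$). The $\cL$-definability statement will then fall out of the fact that, for an $\cL$-theory of topological fields with $\cL$-definable topology, topological closure preserves $\cL$-definability, as does projection.

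For the inclusion $\overline{\J_m(X)}\subseteq\overline{\pi_m^d(\overline{Z})}$, I would argue $\J_m(X)=\pi_m^d(\J_d(X))\subseteq\pi_m^d(Z)\subseteq\pi_m^d(\overline{Z})$ using that $\J_d(X)\subseteq Z$ (which is part of $X=\J_d^{-1}(Z)$ in the definition of a linked triple). Taking closures on both sides gives the inclusion.

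For the reverse inclusion $\overline{\pi_m^d(\overline{Z})}\subseteq\overline{\J_m(X)}$, I would use the continuity of the coordinate projection $\pi_m^d$, which gives $\pi_m^d(\overline{A})\subseteq\overline{\pi_m^d(A)}$ for any $A$. Applying this to $A=\J_d(X)$ and using the linked triple hypothesis $\overline{Z}=\overline{\J_d(X)}$, we obtain
\[
\pi_m^d(\overline{Z})=\pi_m^d\bigl(\overline{\J_d(X)}\bigr)\subseteq\overline{\pi_m^d(\J_d(X))}=\overline{\J_m(X)}.
\]
Taking closures yields the claimed inclusion.

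For the final assertion, since $T$ has an $\cL$-definable topology, the closure of any $\cL$-definable set is $\cL$-definable, so $\overline{Z}$ is $\cL$-definable; its image under the projection $\pi_m^d$ is also $\cL$-definable, and taking one more closure we conclude that $\overline{\pi_m^d(\overline{Z})}$, hence $\overline{\J_m(X)}$, is $\cL$-definable. There is no serious obstacle here: the whole lemma is a formal manipulation, and its real content is that a linked triple at level $d$ produces the closure of the prolongation at any intermediate level $m$, which is precisely what the iteration in the open core argument will exploit.
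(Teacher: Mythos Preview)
Your proof is correct and follows essentially the same approach as the paper's own proof. The paper argues both inclusions via explicit open neighbourhoods, but this is just the pointwise unpacking of the same two facts you use: the commutation $\pi_m^d\circ\J_d=\J_m$ together with $\J_d(X)\subseteq Z$ for one inclusion, and continuity of $\pi_m^d$ combined with $\overline{Z}=\overline{\J_d(X)}$ for the other.
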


\begin{proof}
From left-to-right, let $a\in \J_{\bar m}(X)$ and $U$ an open neighbourhood of $a$. So $a=\bd^{\bar m}(x)$ with $x\in X$. Then by assumption $\bd^{\bar d}(x)\in Z$ and $a\in \pi_{\bar m}^{\bar d}(Z)$. 

Conversely, let $b\in \pi_{\bar m}^{\bar d}(\overline{Z})$ and $O$ an open neighbourhood of $b$. Then for some $z\in \overline{Z}$, $b=\pi_{\bar m}^{\bar d}(z)$. Let $\tilde O$ an open neighbourhood of $z$ such that $\pi^{\bar d}_{\bar m}(\tilde O)=O$. Since $z\in \overline{Z}$ and $\overline Z=\overline{\J_{\bar d}(X)}$, there is $x\in X$ such that $\bd^{\bar d}(x)\in \tilde O$ and so $\pi_{\bar m}^{\bar d}(\bd^{\bar d}(x))\in O$, namely $\bd^{\bar m}(x)\in O$. So $b\in \overline{\J_{\bar m}(X)}$.
\end{proof}

\begin{lemma}\label{lem:cloture2} Let $X \subseteq K^n$ be a non-empty $\cL_\delta$-definable set such that for some $\bar m\geq o(X)$, $\overline{\J_{\bar m}(X)}$ is $\cL$-definable. Then there is a linked triple of the form
$(X, Z, \bar m)$. 
\end{lemma}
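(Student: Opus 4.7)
The plan is to construct the required set $Z$ directly as the intersection of two already-$\cL$-definable sets. Since $m \geq o(X)$, Remark \ref{rem:order_up} gives an $\cL$-definable set $Y \subseteq K^{n(m+1)}$ with $X = \J_m^{-1}(Y)$. By hypothesis, $\overline{\J_m(X)}$ is also $\cL$-definable. I would therefore set
\[
Z \coloneqq Y \cap \overline{\J_m(X)},
\]
which is $\cL$-definable, and claim that $(X, Z, m)$ is a linked triple.

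For condition (1) of Definition \ref{def:circ1}, I would verify $X = \J_m^{-1}(Z)$. The inclusion $X \subseteq \J_m^{-1}(Z)$ holds since for any $x \in X$ we have $\J_m(x) \in Y$ (because $X = \J_m^{-1}(Y)$) and $\J_m(x) \in \J_m(X) \subseteq \overline{\J_m(X)}$, so $\J_m(x) \in Z$. The converse inclusion $\J_m^{-1}(Z) \subseteq \J_m^{-1}(Y) = X$ is immediate from $Z \subseteq Y$.

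For condition (2), i.e., $\overline{Z} = \overline{\J_m(X)}$, the inclusion $\overline{Z} \subseteq \overline{\J_m(X)}$ follows from $Z \subseteq \overline{\J_m(X)}$ together with the fact that $\overline{\J_m(X)}$ is closed. Conversely, by the argument for condition (1), $\J_m(X) \subseteq Z$, hence $\overline{\J_m(X)} \subseteq \overline{Z}$.

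There is no real obstacle here: the statement is essentially a bookkeeping converse to Lemma \ref{lem:cloture1}, and the only inputs are Remark \ref{rem:order_up} (to obtain $Y$ at the given level $m$, possibly above $o(X)$) and the hypothesis that the topological closure $\overline{\J_m(X)}$ is $\cL$-definable. Once both sets live in $\cL$, their intersection is an $\cL$-definable witness with the required two properties.
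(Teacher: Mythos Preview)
Your proof is correct and is essentially the same as the paper's. The paper defines $Z \coloneqq \{x\in \overline{\J_{m}(X)} : \pi_{o(X)}^m (x) \in Y_0\}$ for some $\cL$-definable $Y_0\subseteq K^{n(o(X)+1)}$ with $X=\J_{o(X)}^{-1}(Y_0)$; since the $Y$ you obtain from Remark~\ref{rem:order_up} is precisely $(\pi_{o(X)}^m)^{-1}(Y_0)$, your $Z = Y\cap \overline{\J_m(X)}$ is literally the same set, and you have simply written out the ``easy exercise'' the paper omits.
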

\begin{proof} Let $Y\subseteq K^{o(X)+1}$ be an $\cL$-definable set such that $X=\J_{o(X)}^{-1}(Y)$. Set 
\[
Z \coloneqq \{x\in  \overline{\J_{\bar m}(X)} : \pi_{o(X)}^{\bar m} (x) \in Y \}. 
\]
The hypothesis yields that $Z$ is $\cL$-definable. It is an easy exercise to show that $(X,Z,\bar m)$ is a linked triple.
\end{proof}

\begin{lemma}\label{lem:up-down} Let $X \subseteq K^n$ be a non-empty $\cL_\delta$-definable set. Let $\bar m, \bar d\in \N^n$ be such that $o(X) \leqslant \bar m <\bar d$ and suppose
$(X, Z, \bar d)$ is a linked triple. Then, there is a linked triple of the form 
$(X,Z',\bar m)$.
\end{lemma}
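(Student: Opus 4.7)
The plan is to obtain the desired linked triple $(X, Z', m)$ as an immediate consequence of the two preceding lemmas. First, I would invoke Lemma \ref{lem:cloture1}: since $(X, Z, d)$ is a linked triple with $o(X) \leqslant m < d$, this lemma gives the identity $\overline{\J_m(X)} = \overline{\pi_m^d(\overline{Z})}$. Because $Z$ is $\cL$-definable (as part of the data of a linked triple), its topological closure $\overline{Z}$ is $\cL$-definable, its image under the coordinate projection $\pi_m^d$ is $\cL$-definable, and hence so is $\overline{\pi_m^d(\overline{Z})}$. In particular, $\overline{\J_m(X)}$ is $\cL$-definable.

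Having established the $\cL$-definability of $\overline{\J_m(X)}$ with $m \geqslant o(X)$, I would then apply Lemma \ref{lem:cloture2} directly, which yields a linked triple of the form $(X, Z', m)$, as required. Explicitly, one can take
\[
Z' \coloneqq \{\, x \in \overline{\J_m(X)} : \pi_{o(X)}^m(x) \in Y \,\},
\]
where $Y \subseteq K^{n(o(X)+1)}$ is an $\cL$-definable set with $X = \J_{o(X)}^{-1}(Y)$.

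There is no real obstacle here: the content of the statement is entirely absorbed by Lemmas \ref{lem:cloture1} and \ref{lem:cloture2}, and the lemma simply records the fact that the existence of a linked triple at some level $d$ can be transferred down to any intermediate level $m$ with $o(X) \leqslant m < d$. Consequently, the proof reduces to citing the two previous lemmas in sequence, and no further calculation or verification is needed.
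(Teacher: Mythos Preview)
Your proposal is correct and follows exactly the same approach as the paper: invoke Lemma~\ref{lem:cloture1} to conclude that $\overline{\J_m(X)}$ is $\cL$-definable, and then apply Lemma~\ref{lem:cloture2} to obtain the linked triple $(X,Z',m)$. The explicit description of $Z'$ you give is precisely the one produced in the proof of Lemma~\ref{lem:cloture2}, so nothing additional is needed.
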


\begin{proof} By Lemma \ref{lem:cloture1}, $\overline{\J_{\bar m}(X)}$ is $\cL$-definable. Therefore by Lemma \ref{lem:cloture2}, there is a linked triple of the form $(X,Z',\bar m)$.
\end{proof}

We now prove a rather technical lemma, which is a parametric version of the density of differential points (Lemma \ref{fact:density}).  But before doing that we need to put the $\bF$-quantifier free $\cL_{\delta}$-definable sets in an amenable form (in order to use the scheme (DL)).

\begin{lemma}\label{lem:deltaniceform} 
Let $\varphi(x,y)$ be an $\cL_\delta(\cK)$-formula where $x=(x_1,\ldots, x_{n})$ are $\bF$-variables and $y$ is a single $\bF$-variable. Let $\ord(\varphi)=(m_1,\ldots, m_{n+1})$ and $m$ be an integer such that $m\geqslant m_i$ for all $1\leqslant i\leqslant n+1$. Then, $\varphi$ is equivalent to a finite disjunction of $\cL_\delta(\cK)$-formulas of the form 
\[
\cZ_{\cA}^{S}(x,y) \wedge \theta(\bd^{m}(x), \bd^{m}(y)), 
\]
where $\theta$ is an $\cL(\cK)$-formula which defines an open subset of $K^{(n+1)(m+1)}$ and either $\cA\subseteq K\{x\}$ or $\cA$ contains only one differential polynomial $P$ of non-negative order in $y$ and $s_P$ divides $S$. In addition, $\ord_{x_i}(\cA,S)\leqslant 2m$ for $1\leqslant i\leqslant n$ and $\ord_y(\cA,S)\leqslant m$.   
% Let $\varphi(x,y)$ be an $\cL_\delta(\cK)$-formula where $x=(x_1,\ldots, x_{n})$ are $\bF$-variables and $y$ is a single $\bF$-variable. Then, letting $\ord(\varphi)=(\bar{m},m)\in \N^{n+1}$, $\varphi$ is equivalent to a finite disjunction of $\cL_\delta(\cK)$-formulas of the form 
% \[
% \cZ_{\cA}^{S}(x,y) \wedge \theta(\bd^{m}(x), \bd^m(y)), 
% \]
% where $\theta$ is an $\cL(\cK)$-formula which defines an open subset of $K^{\ord(\varphi)+1}$ and either $\cA\subseteq K\{x\}$ or $\cA$ contains only one differential polynomial $P$ of non-negative order in $y$ and $s_P$ divides $S$. In addition, $\ord_{x_i}(\cA,S)\leqslant m_i+m$ for $1\leqslant i\leqslant n$ and $\ord_y(\cA,S)\leqslant m$.   
\end{lemma}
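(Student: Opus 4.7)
The plan is to combine relative quantifier elimination (Theorem \ref{thm:QE}) with the structural description of $\cL$-definable sets coming from assumption $(\mathbf{A})$ and with the differential analogue of Lemma \ref{cor:goodform} provided by Lemma \ref{lem:minord3}.

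First, I would use Theorem \ref{thm:QE} to replace $\varphi(x,y)$ by an equivalent field-sort quantifier-free $\cL_\delta(K)$-formula. Since the order of $\varphi$ is $m$, by Notation \ref{not:etoile} there exists a quantifier-free $\cL(K)$-formula $\psi(\bar{x},\bar{y})$, with $\bar{x}=(\bar{x}_1,\ldots,\bar{x}_n)$ and each $\bar{x}_i,\bar{y}$ of length $m+1$, such that $\varphi(x,y)$ is equivalent to $\psi(\bd^m(x),\bd^m(y))$. Next, I would apply Corollary \ref{cor:niceform1} (the parametric consequence of assumption $(\mathbf{A})$) to rewrite $\psi$ as a finite disjunction of formulas of the form
\[
\bigwedge_{j\in J_i} P_{ij}(\bar{x},\bar{y})=0 \wedge \theta_i(\bar{x},\bar{y}),
\]
where each $P_{ij}\in K[\bar{x},\bar{y}]$ and each $\theta_i$ is an $\cL(K)$-formula defining an open subset of $K^{(n+1)(m+1)}$.

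The next step is to translate ordinary polynomials back into differential polynomials. Each $P_{ij}(\bar{x},\bar{y})$ corresponds canonically to a differential polynomial $\hat{P}_{ij}(x,y)\in K\{x,y\}$ obtained by substituting $\delta^k(x_i)$ for the variable $x_{i,k}$ and $\delta^k(y)$ for $y_k$. By construction, $\ord_{x_i}(\hat{P}_{ij})\leqslant m$ and $\ord_y(\hat{P}_{ij})\leqslant m$. Setting $\cA_i\coloneqq\{\hat{P}_{ij}:j\in J_i\}$, the formula $\varphi(x,y)$ is then equivalent to a finite disjunction of formulas
\[
\cZ_{\cA_i}(x,y)\wedge \theta_i(\bd^m(x),\bd^m(y)).
\]

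Finally, I would apply Lemma \ref{lem:minord3} (with $R=1$) to each $\cZ_{\cA_i}(x,y)$ to further decompose it into a finite union of sets $\cZ_\cB^{S_\cB}(x,y)$ already in the required shape: either $\cB\subseteq K\{x\}$ or $\cB$ contains a unique differential polynomial $P_\cB$ of non-negative order in $y$ whose separant divides $S_\cB$. The bounds from Lemma \ref{lem:minord3} give $\ord_y(S_\cB)\leqslant m$ and $\ord_{x_i}(\cB,S_\cB)\leqslant m+m=2m$ for each $1\leqslant i\leqslant n$, which are exactly the bounds asserted in the lemma. Distributing this decomposition over the open conditions $\theta_i$ (which are unaffected) yields the desired disjunction. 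The proof is essentially an assembly of already-established tools; the only mild obstacle is bookkeeping the order bounds across the ordinary/differential polynomial translation, which is precisely where the factor of $2$ in the bound $\ord_{x_i}(\cA,S)\leqslant 2m$ arises.
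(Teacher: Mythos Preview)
Your proof is correct and follows essentially the same route as the paper: reduce to a field-sort quantifier-free formula via Theorem \ref{thm:QE}, apply assumption $(\mathbf{A})$ to the associated $\cL$-formula $\varphi^*$ to obtain a disjunction of Zariski-closed conditions intersected with open conditions, convert the ordinary polynomials back into differential polynomials, and then invoke Lemma \ref{lem:minord3} to produce the required shape and order bounds. The only cosmetic difference is that you cite Corollary \ref{cor:niceform1} where the paper cites assumption $(\mathbf{A})$ directly, but since you only use the disjunctive normal form (not the separant structure of that corollary), the two arguments are effectively identical.
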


\begin{proof} 
Possibly by adding formulas of the form $\delta^k(x_i)=\delta^k(x_i)$ (resp. $\delta^k(y)=\delta^k(y)$), by Corollary \ref{cor:eq} and condition \ref{condA}, $\varphi$ is equivalent to a disjunction of the form 
\[
Z_\cA(\bd^{m}(x),\bd^{m}(y))\wedge \theta(\bd^{m}(x), \bd^m(y)),
\]
where $\theta$ is an $\cL(\cK)$-formula which defines an open subset of $K^{(n+1)(m+1)}$ and $\cA\subseteq K[\bar{x},\bar{y}]$. Define 
\[
\cA'\coloneqq \{Q(\bd^{m}(x),\bd^{m}(y))  : Q\in \cA\}. 
\]
By definition, we have that $\varphi$ is equivalent to the corresponding disjunction of $\cL_\delta(\cK)$-formulas of the form 
\[
\cZ_{\cA'}(x,y)\wedge \theta(\bd^{{m}}(x), \bd^m(y)). 
\]
By Lemma \ref{lem:minord3}, the formula $\cZ_{\cA'}(x,y)$ is equivalent (modulo $T_\delta$) to a finite disjunction of formulas of the form $\cZ_\cB^{S_\cB}(x,y)$ 
such that $\ord_y(S_\cB)\leqslant m$, $\ord_{x_i}(\cB,S_\cB)\leqslant 2m$ for each $1\leqslant i\leqslant n$, and either 
\begin{enumerate}
\item[$\bullet$] $\cB\subseteq K\{x\}$ or
\item[$\bullet$] there is a unique $P\in \cB$ of non-negative order in $y$, $\ord_y(P_\cB)\leqslant m$ and $s_{P_\cB}$ divides $S_\cB$. 
\end{enumerate}
Then $\varphi(x,y)$ is equivalent to the disjunction of the corresponding disjunction of $\cL_\delta(\cK)$-formulas
\[
\cZ_\cB^{S_\cB}(x,y)\wedge \theta(\bd^{{m}}(x), \bd^m(y)). \qedhere
\]
\end{proof}

\begin{lemma}\label{lem:envelop} Let $X$ be an $\cL_{\delta}$-definable subset of $K^{n+1}$ with $\ord(X)=(m_1,\ldots,m_{n+1})$. Let $m\geqslant m_i$, $1\leqslant i\leqslant n+1$. 
Then, for $d=3m$ and $\bar d=(d,\ldots,d,m)\in \N^{n+1}$, there is an $\cL$-definable subset $Y\subseteq K^{\bar d+1}$ such that 
\begin{enumerate}
\item $X=\J_{\bar d}^{-1}(Y)$ and 
\item for every $a\in K^n$ and $c\in K^{m+1}$ such that $(\bd^d(a),c)\in Y$ it holds that for every open neighbourhood $W$ of $c$ there is $b\in K$ such that $\bd^m(b)\in W$ and $(\bd^d(a),\bd^m(b))\in Y$. 
\end{enumerate}
In particular, $\vert X_{a}\vert=\vert Y_{\bd^d(a)}\vert$ for every $a\in K^n$ such that $X_{a}$ is finite.
\end{lemma}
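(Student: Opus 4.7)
The plan is to refine $X$ via Lemma \ref{lem:deltaniceform} and then translate each resulting piece into an $\cL$-formula that explicitly encodes the rational prolongations from Lemma-Definition \ref{lemdef:ratioprolong}. Writing elements of $K^{n+1}$ as $(x,y)$ with $y$ a single field sort variable, I first apply Lemma \ref{lem:deltaniceform} to express $X$ as a finite union $\bigcup_{i\in I} X_i$ of sets defined by formulas $\cZ_{\cA_i}^{S_i}(x,y)\wedge\theta_i(\bd^m(x),\bd^m(y))$, where $\theta_i$ defines an open set, $\ord_{x_j}(\cA_i,S_i)\leq 2m$, $\ord_y(\cA_i,S_i)\leq m$, and either \textbf{(A)} $\cA_i\subseteq K\{x\}$, or \textbf{(B)} $\cA_i=\{P_i\}$ with $\ord_y(P_i)=k_i\geq 0$ and $s_{P_i}\mid S_i$. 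The choice $d=2m$ ensures that the ordinary blocks $\bar x_j\in K^{d+1}$ are long enough to accommodate every polynomial involved.

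For each $i$ I build $Y_i\subseteq K^{(n+1)(d+1)}$ by translating $\cZ_{\cA_i}^{S_i}$ into ordinary polynomial conditions on $(\bar x,\bar y)$ (substituting $\delta^{j}(x_t)\leftrightarrow x_{t,j}$ and $\delta^{j}(y)\leftrightarrow y_j$) and retaining $\theta_i$ evaluated on the first $m+1$ coordinates of each block. In case \textbf{(A)} this suffices. In case \textbf{(B)} I additionally impose the prolongation relations $y_{k_i+j}=(f_j^{P_i})^*(\bar x,y_0,\ldots,y_{k_i})$ for $1\leq j\leq d-k_i$ coming from Lemma-Definition \ref{lemdef:ratioprolong}. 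Then $\J_d^{-1}(Y_i)=X_i$ is immediate: on the one hand, any $b\in X_i$ satisfies $P_i(b)=0$ and $s_{P_i}(b)\neq 0$, which by repeated differentiation forces the prolongation relations to hold in $K$; conversely any $b$ with $\bd^d(b)\in Y_i$ clearly satisfies the defining formula of $X_i$. Putting $Y\coloneqq\bigcup_i Y_i$ gives~(1).

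For~(2), fix $(\bd^d(a),c)\in Y_i$ and an open neighbourhood $W$ of $c$. In case \textbf{(A)} the fiber $(Y_i)_{\bd^d(a)}$ is the open subset of $K^{d+1}$ cut out by $S_i^*(\bd^d(a),\bar y)\neq 0$ and $\theta_i(\bd^m(a),y_0,\ldots,y_m)$, and Lemma \ref{fact:density} furnishes $b\in K$ with $\bd^d(b)\in W\cap (Y_i)_{\bd^d(a)}$. In case \textbf{(B)} with $k_i\geq 1$, the tuple $(c_0,\ldots,c_{k_i})$ is a regular zero of the ordinary polynomial $P_i^*(\bd^d(a),\cdot)$, so the axiom scheme $(\mathrm{DL})$ applied to $P_i(a,y)\in K\{y\}$ yields $b\in K$ with $P_i(a,b)=0$, $s_{P_i}(a,b)\neq 0$ and $\bd^{k_i}(b)$ in any prescribed neighbourhood of $(c_0,\ldots,c_{k_i})$; by continuity of the $(f_j^{P_i})^*$ on $\{s_{P_i}\neq 0\}$, shrinking this neighbourhood forces $\bd^d(b)$ into $W$ and preserves $\theta_i$. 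When $k_i=0$ the conclusion is automatic: $b\coloneqq c_0$ already lies in $X_i$ since its $K$-prolongations coincide with the prescribed $c_j$'s, whence $\bd^d(b)=c\in W$.

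The main technical point is the continuity-of-composition step in case \textbf{(B)} with $k_i\geq 1$: the $(\mathrm{DL})$-witness only approximates $(c_0,\ldots,c_{k_i})$, and one must then propagate this closeness through the rational prolongations to control all of $\bd^d(b)$ and to keep $\theta_i$ satisfied. For the final cardinality assertion, note that a non-empty fiber in case \textbf{(A)} contains a non-empty open set and is hence infinite by Lemma \ref{fact:density}, so finite fibers of $X$ can only come from case \textbf{(B)} pieces; the map $b\mapsto\bd^d(b)$ is injective on $X_a$ (look at the first coordinate), and by~(2) every $c\in Y_{\bd^d(a)}$ lies in the closure of $\{\bd^d(b):b\in X_a\}$, so finiteness of $X_a$ forces the equality $c=\bd^d(b)$ for some $b\in X_a$.
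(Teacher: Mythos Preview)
Your argument is essentially the paper's own proof. Both proceed by applying Lemma~\ref{lem:deltaniceform} to decompose $X$, translating each piece into an $\cL$-formula by replacing differential polynomials by their starred versions and appending the rational prolongation relations of Lemma-Definition~\ref{lemdef:ratioprolong} (the paper packages this via the formulas $\lambda_{P_j}^{\,\cdot}$ of Notation~\ref{not:prolong2}), verifying~(1) directly, and proving~(2) by the same case split: Lemma~\ref{fact:density} in the $\cA_i\subseteq K\{x\}$ case, and the scheme~(DL) together with continuity of the $(f_j^{P_i})^*$ in the other. The final cardinality claim the paper dispatches in one line (``part~(2) and Hausdorff''), which is exactly the closure/injectivity argument you spell out.

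Two small remarks. First, in case~\textbf{(B)} the correct reading of Lemma~\ref{lem:deltaniceform} is that $\cA_i$ contains a \emph{unique} element $P_i$ of non-negative $y$-order, not that $\cA_i=\{P_i\}$; the remaining elements of $\cA_i$ lie in $K\{x\}$ and should be carried along into the definition of $Y_i$. Second, your explicit treatment of the subcase $k_i=0$ is a genuine (if minor) addition: the scheme~(DL) is only stated for order~$\geq 1$, so the observation that for $k_i=0$ one may simply take $b=c_0$ and read off $\bd^d(b)=c$ from the prolongation constraints fills a gap the paper's write-up leaves implicit.
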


\begin{proof} 
Let $\varphi(x,y)$ be an $\cL_\delta(\cK)$-formula where $x=(x_1,\ldots, x_n)$, $y$ is a single variable and $\varphi(\cK)=X$. By Lemma \ref{lem:deltaniceform}, $\varphi(x,y)$ is equivalent, modulo $T_\delta^*$, to a finite disjunction of the form 
\[
\bigvee_{j\in J} \cZ_{\cA_j}^{S_j}(x,y) \wedge \theta_j(\bd^{m}(x), \bd^m(y)), 
\]
where for each $j\in J$, $\theta_j$ is an $\cL(\cK)$-formula which defines an open subset of $K^{(n+1)(m+1)}$ and either $\cA_j\subseteq K\{x\}$ or $\cA_j\subseteq K\{x,y\}$, it only contains one differential polynomial $P_j$ of non-negative order $k_j\leqslant m$ in $y$ and $s_{P_j}$ divides $S_j$. In addition, $\ord_{x_i}(\cA_j,S_j)\leqslant 2m$ for $1\leqslant i\leqslant n$ and $\ord_y(\cA_j,S_j)\leqslant m$. For each $j\in J$, let $\tilde{\theta}_j(\bar{x}[m], \bar{y})$ be the $\cL(\cK)$-formula $\theta_j(\bar{x},\bar{y})\wedge S_j^*(\bar{x}[m],\bar{y})\neq 0$, where $\bar{x}\coloneqq(\bar{x}_1,\ldots,\bar{x}_n)$ with $\ell(\bar{x}_i)=m+1$ and $\ell(\bar y)=m+1$. Note that $\tilde{\theta}_j$ defines an open subset of $K^{n(2m+1)}\times K^{m+1}$. For each $j\in J$, we define by cases an $\cL(\cK)$-formula $\psi_j(\bar{x}[2m],\bar{y})$ depending on whether $\cA\subseteq K\{x\}$ or not: 
\begin{enumerate}
\item[$(i)$] if $\cA_{j}\subseteq K\{x\}$ then  $\psi_j(\bar{x}[2m],\bar{y})$ is 
\[
Z_{\cA_{j}}(\bar x[m], \bar{y})\wedge \tilde{\theta}_j(\bar{x}[m],\bar{y}).
\] 
\item[$(ii)$] otherwise, 
%letting $m_{n+1}\geq k_j=\ord_{y}(P_j)$ 
we define $\psi_j(\bar{x}[2m],\bar{y})$ as 
\[
Z_{\cA_{j}}(\bar x[m], \bar{y})\wedge \tilde{\theta}_{j}(\bar{x}[m],\bar{y})\wedge\lambda_{P_{j}}^{m-k_j}(\bar{x}[m][m-k_j],\bar{y}). 
\] 
\end{enumerate}
Let $\psi(\bar{x}[2m],\bar{y})$ be the disjunction $\bigvee_{j\in J} \psi_j(\bar{x}[2m],\bar{y})$ and $Y$ be the subset of $K^{\bar{d}+1}$ defined by $\psi$. Let us show (1). The inclusion $\J_{\bar d}^{-1}(Y)\subseteq X$ is clear. The converse follows by noting that for each $j\in J$ for which $\psi_j$ is as in $(ii)$
\[
T_\delta^* \models \forall x \forall y (\cZ_{\cA_j}^{S_j}(x,y) \to  \lambda_{P_{j}}^{m-k_j}(\bd^{3m-k_j}(x), \bd^m(y))).  
\]
It remains to show (2). 

Fix $a\in K^n$ and $c=(c_0,\ldots,c_m)\in K^{m+1}$ such that $(\bd^d(a),c)\in Y$. Let $j\in J$ be such that $\psi_j(\bd^d(a),c)$ holds.
%, where $c\vert m:=(c_0,\ldots,c_m)$.
We split in cases. If $\psi_j$ is as in $(i)$, then the result follows from Lemma \ref{fact:density}. So suppose $\psi_j$ is as in $(ii)$. Let $W$ be an open neighbourhood of $(c_0,\ldots,c_{m})$. Without loss of generality, we may suppose there is $V$ an open neighbourhood of $\bd^{2m}(a)$ such that $V\times W\subseteq \tilde\theta_j(\cK)$. Let $V\times V_1$ be an open neighbourhood of $\bd^{3m-k_j}(a)$. By the continuity of the functions $(f_{i}^{P_{\ell}})^*$ (see Lemma-Definition \ref{lemdef:ratioprolong}), we may shrink $V\times V_1$ to a smaller open neighbourhood of $\bd^{3m-k_j}(a)$ and find an open neighbourhood $W_{1}$ of $(c_{0},\ldots,c_{k_{j}})$ such that, letting $U\coloneqq V\times V_1\times W_{1}$ 
\[
W_1\times (f_1^{P_j})^*(U) \times \ldots \times (f_{m-k_j}^{P_j})^*(U) \subseteq W, 
\] 
where $(f_i^{P_j})^*$, $1\leqslant i\leqslant m-k_j$ is seen as a function from $K^{n(d+1)}\times K^{k_j+1}$ to $K$. By the scheme (DL), we can find a differential tuple $\bd^{k_j}(b)\in W_{1}$ such that 
\[
K\models P_{j}^*(\bd^{2m}(a),\bd^{k_j}(b))=0\wedge\frac{\partial}{\partial y_{k_j}} P_{j}^*(\bd^{2m}(a),\bd^{k_j}(b))\neq 0.
\]
This implies that $\delta^{k_j+i}(b)=(f_{i}^{P_{j}})^*(\bd^{2m+i}(a),\bar \delta^{k_j}(b))$ for each $i\in \{1,\ldots, m-k_{j}\}$, and hence $\bd^m(b)\in W$. We obtain thus that $(\bd^{2m}(a), \bd^{m}(b))\in V\times W\subseteq \tilde \theta_j(\cK)$. This shows finally that $\psi_j(\bd^{d}(a),\bd^m(b))$ holds, so $(\bd^d(a),\bd^m(b))\in Y$. 

The last statement follows directly from part (2) and the fact the topology is Hausdorff. 
\end{proof}

\begin{corollary}\label{cor:envelop1} Let $X\subseteq K$ be $\cL_{\delta}$-definable. Then for every $m\geqslant o(X)$, $X$ has a linked triple of the form $(X,Z,m)$. \qed
\end{corollary}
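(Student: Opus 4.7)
The plan is to produce a linked triple for $X$ at the comfortable high level $d = 2m$ supplied by Lemma \ref{lem:envelop}, and then descend to the required level $m$ via Lemma \ref{lem:up-down}. All of the hard analytic content --- namely, the parametric density of differential tuples in open sets --- has already been absorbed into Lemma \ref{lem:envelop}, so the proof should reduce to chaining these two lemmas together.

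Concretely, fix $m \geq o(X)$ and set $d = 2m$. Applying Lemma \ref{lem:envelop} to $X \subseteq K$ (with $n = 0$ in the statement of the lemma, so that there is no parameter variable $x$) yields an $\cL$-definable set $Y \subseteq K^{d+1}$ such that $X = \J_d^{-1}(Y)$ and such that, for every $c \in Y$ and every open neighbourhood $W$ of $c$, one can find $b \in K$ with $\bd^d(b) \in Y$ and $\bd^d(b)$ lying in $W$. Since $\bd^d(b) \in Y$ is equivalent to $b \in X$, this is precisely the statement that $\J_d(X)$ is dense in $Y$. Combined with the trivial inclusion $\J_d(X) \subseteq Y$, this gives $\overline{\J_d(X)} = \overline{Y}$. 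Hence $(X, Y, d)$ is a linked triple.

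It remains to bring the level down from $d$ to $m$. If $m = d$, then necessarily $m = 0$, so $o(X) = 0$; in that case $X$ is $\cL$-definable by Remark \ref{rem:order_up}, and the trivial triple $(X, X, 0)$ is linked. Otherwise $o(X) \leq m < d$, and Lemma \ref{lem:up-down} applied to the linked triple $(X, Y, d)$ produces a linked triple of the required form $(X, Z, m)$.

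The only substantive observation is that clause (2) of Lemma \ref{lem:envelop} literally encodes density of $\J_d(X)$ in $Y$; once this is noticed, the rest is formal bookkeeping between the two lemmas. I do not expect any real obstacle in carrying this out.
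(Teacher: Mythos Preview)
Your proposal is correct and follows essentially the same approach as the paper: apply Lemma~\ref{lem:envelop} with $n=0$ to obtain a linked triple $(X,Y,d)$ at level $d=2m$, then descend to level $m$ via Lemma~\ref{lem:up-down}. Your treatment of the edge case $m=d=0$ is in fact slightly more explicit than the paper's, since Lemma~\ref{lem:up-down} literally requires $m<d$; the paper glosses over this, but as you note the triple is already at the correct level in that case.
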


%\begin{proof} Setting $d=2m$, let $Y\subseteq K^{d+1}$ be the $\cL$-definable subset given by Lemma \ref{lem:envelop} with $n=0$. Conditions (1) and (2) of Lemma \ref{lem:envelop} imply that $(X,Y,d)$ is a linked triple. By Lemma \ref{lem:up-down}, $X$ has a linked triple of the form $(X,Z,m)$ for any $o(X)\leqslant m\leqslant d$.
%\end{proof}

\begin{theorem}\label{thm:opencoregen} Let $T$ be an open $\cL$-theory of topological fields. Then the theory $T_\delta^*$ has $\cL$-open core.
\end{theorem}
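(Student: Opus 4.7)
The plan is to use Proposition \ref{thm:fermeture}, which reduces the statement to showing that every $\cL_\delta$-definable set $X \subseteq K^n$ admits a linked triple $(X, Z, m)$ for some $m \geqslant o(X)$. I would proceed by induction on $n$, with the base case $n=1$ handled directly by Corollary \ref{cor:envelop1}: the fiber-wise density statement of Lemma \ref{lem:envelop} over the trivial parameter space already yields the closure equality needed for a linked triple.

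For the inductive step, fix an $\cL_\delta$-definable $X \subseteq K^{n+1}$, choose $m \geqslant o(X)$, and set $d=2m$. Lemma \ref{lem:envelop} produces an $\cL$-definable $Y \subseteq K^{(n+1)(d+1)}$ with $X = \J_d^{-1}(Y)$, together with its parametric fiber-density property. Let $\pi \colon K^{n+1}\to K^n$ be the projection onto the first $n$ coordinates and $\pi_1 \colon K^{(n+1)(d+1)}\to K^{n(d+1)}$ the induced projection on the ``parameter blocks'', so that $\pi_1 \circ \J_d = \J_d \circ \pi$. The inductive hypothesis applied to the $\cL_\delta$-definable set $\pi(X) \subseteq K^n$, combined with Lemma \ref{lem:up-down} to match indices, furnishes a linked triple $(\pi(X), Z_0, d)$. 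I would then set
\[
Z \coloneqq Y \cap \pi_1^{-1}(Z_0),
\]
which is $\cL$-definable, and claim that $(X, Z, d)$ is the desired linked triple for $X$.

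The identity $X = \J_d^{-1}(Z)$ follows by a direct diagram chase using $\pi(X) = \J_d^{-1}(Z_0)$ and $\pi_1 \circ \J_d = \J_d \circ \pi$. The substantive content is the closure equality $\overline{Z} = \overline{\J_d(X)}$, for which the inclusion $\overline{\J_d(X)} \subseteq \overline{Z}$ is immediate from $\J_d(X) \subseteq Z$. For the reverse inclusion I would apply Theorem \ref{thm:newCD} to decompose $Z$ into finitely many $\cL$-definable cells and treat each such cell $C$ in turn. Given $y = (y', y'') \in C$ with $y' = \pi_1(y) \in Z_0 = \overline{\J_d(\pi(X))}$, the linked triple for $\pi(X)$ provides approximations of $y'$ by points of the form $\J_d(a)$ with $a\in \pi(X)$; Lemma \ref{lem:Ldens} applied to the cell $C$ lifts such an approximation to a point $(\J_d(a), c)\in C\subseteq Y$ with $c$ arbitrarily close to $y''$; and Lemma \ref{lem:envelop}(2) finally upgrades $(\J_d(a), c) \in Y$ to an element $b \in X_a$ whose prolongation $\J_d(b)$ lies close to $c$, so that $\J_d((a,b)) = (\J_d(a), \J_d(b))$ approximates $y$, as required. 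The main obstacle is synchronising this three-step nested approximation; in particular, guaranteeing that the approximating $\J_d(a)$ lies inside $\pi_1(C)$ (not just near $y'$) so that Lemma \ref{lem:Ldens} genuinely applies. I would resolve this by invoking the inductive hypothesis on the $\cL_\delta$-definable set $\{a \in \pi(X) : \J_d(a) \in \pi_1(C)\}$ together with the fact, built into Definition \ref{def:cells}, that $\pi_1(C)$ is itself an $\cL$-definable cell.
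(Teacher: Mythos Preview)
Your strategy coincides with the paper's: reduce via Proposition~\ref{thm:fermeture} to building linked triples, induct on $n$ with Corollary~\ref{cor:envelop1} as base case, and in the inductive step set $Z = Y \cap \pi_1^{-1}(Z_0)$ where $Y$ comes from Lemma~\ref{lem:envelop} and $Z_0$ from a linked triple for $\pi(X)$; the closure equality is then checked by cell-decomposing $Z$ and combining Lemma~\ref{lem:Ldens} with the fibre-density clause of Lemma~\ref{lem:envelop}. This is exactly the paper's $Y'$ and the same three-step approximation.

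There is, however, a real gap in your index bookkeeping. Lemma~\ref{lem:up-down} only \emph{lowers} the index of a linked triple; it cannot raise it. Your inductive hypothesis is the weak form (``some linked triple exists''), so applying it to $\pi(X)$ produces a triple at an uncontrolled index $m_0 \geqslant o(\pi(X))$, and you have no mechanism to reach the specific index $d = 2m$ required to intersect with $Y$. The paper avoids this by proving the stronger inductive statement---for \emph{every} $m \geqslant o(X)$ a linked triple $(X,Z,m)$ exists---so that, after taking $m \geqslant \max\{o(X), o(\pi(X))\}$, the hypothesis yields a triple for $\pi(X)$ directly at index $d$, and Lemma~\ref{lem:up-down} is used only afterwards to descend from $d$ to all smaller admissible indices. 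The same strengthening is needed for your proposed patch to the final obstacle, where you again want a linked triple for $\{a \in \pi(X) : \J_d(a) \in \pi_1(C)\}$ at the specific index $d$. (Incidentally, the paper's own write-up is terse at exactly this spot and does not spell out why the approximating $\bd^d(c)$ lies in $\pi_1(Y_i)$; your instinct to flag the issue is sound, though the fix you sketch does not yet yield $y' \in \overline{\J_d(\{a \in \pi(X) : \J_d(a) \in \pi_1(C)\})}$, which is what is actually required.)
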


\begin{proof} 
Let $X$ be an $\cL_{\delta}$-definable subset of $K^{n+1}$. By Proposition \ref{thm:fermeture}, it suffices to show that $X$ has 
a linked triple, for some $\bar m\geq o(X)$. However, we will show by induction on $n$ the following stronger statement:
\begin{equation}\label{eq:IH}\tag{$\ast$}
\text{ for every $\bar m\geq o(X)$, $X$ has a linked triple $(X, Z, \bar m)$. }
\end{equation} 

For $n=0$, \eqref{eq:IH} follows from Corollary \ref{cor:envelop1}. Suppose $n>0$. By Lemma \ref{lem:up-down}, it suffices to show that $X$ has a linked triple $(X,Z,\bar m)$ for arbitrarily large $\bar m\geq o(X)$.

Consider $\pi(X)\subseteq K^n$ where $\pi\colon K^{n+1}\to K^n$ is the projection onto the first $n$-coordinates. Let $\bar m=(m_1,\ldots,m_{n+1})$ and suppose that $\bar m\geq o(X)$ and $(m_1,\ldots,m_n)\geq o(\pi(X))$. We will show that $X$ has a linked triple of the form $(X, Z, \bar d)$, with $\bar d$ chosen as follows:
for $m\geq m_i$, $1\leqslant i\leqslant n+1$, let $d=3m$, $\overline{3m}=(3m,\dots,3m)\in \N^n$ and $\bar d=(\overline{3m}, m)$.

By Lemma \ref{lem:envelop}, there is an $\cL$-definable subset $Y\subseteq K^{\bar d+1}$ such that $X=\J_{\bar d}^{-1}(Y)$ and, if $x=\bd^d(a)$ for $a\in K^n$ and $(\bd^d(a),c)\in Y$ for some $c\in K^{m+1}$, given any open neighbourhood $W$ of $c$, there is $b\in K$ such that $\bd^m(b)\in W$ and $(\bd^d(a),\bd^m(b))\in Y$.

By induction hypothesis, $\pi(X)$ has a linked triple of the form $(\pi(X), Z, \overline{3m})$.
%(note that $o(\pi(X))\leqslant m\leqslant d$). 
Set $Y'=\{(x,y) \in Y: x\in Z\}$. We claim that $(X,Y', \bar d)$ is a linked triple for $X$. First note that $\J_{\bar d}^{-1}(Y')=X$, and therefore, $\overline{\nabla_{\bar d}(X)}\subseteq \overline{Y'}$. 

To show $\overline{Y'} \subseteq \overline{\nabla_{\bar d}(X)}$, let $(a,b)\in Y'$ and $U\times V$ be an open neighbourhood of $(a,b)$. By cell decomposition (Theorem \ref{thm:newCD}), $Y'=\bigcup_{i\in I} Y_i$, where $Y_i$ is a cell and $I$ is a finite set. Let $i\in I$ be such that $(a,b)\in Y_i$. By Lemma \ref{lem:Ldens}, there is an open neighbourhood $U_1\subseteq U$ of $a$ such that for every $a'\in U_1$, there is $b'\in V$ such that $(a',b')\in Y_i$. Since $a'\in Z$ and $(\pi(X),Z,\overline{3m})$ is a linked triple, there is $c\in \pi(X)$ such that $\bd^d(c) \in U_1$. Let $b'\in V$ be such that $(\bd^d(c), b')\in Y_i$. Since $Y_i\subseteq Y$, there is $c'\in K$ such that $\bd^m(c')\in V$ and $(\bd^d(c), \bd^m(c'))\in Y$. Therefore $(c,c')\in X$ and $(\bd^d(c), \bd^m(c'))\in U\times V$, which shows that $\overline{Y'}\subseteq\overline{\J_{\bar d}(X)}$. 
\end{proof}

\begin{corollary}\label{cor:opencore}
Let $T$ be one of the following theories: $\RCF$, $\ACVF_{0,p}, \RCVF, \PCF$ or, in general the $\cL_\Div$-theory of a henselian valued field of characteristic 0. Then, $T_\delta^*$ has $\cL$-open core.
\end{corollary}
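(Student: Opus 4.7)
The plan is to invoke Theorem \ref{thm:opencoregen} directly, so the work reduces to verifying that each theory in the list is (or can be presented as) an open $\cL$-theory of topological fields in the sense of Section \ref{sec:opentheories}. For $\CODF=\RCF_{\delta}^{*}$, the base theory $\RCF$ is listed in item (1) of Examples \ref{examples}, and similarly $\ACVF_{0,p}$, $\RCVF$ and $\PCF$ appear in items (2)--(4), each in its natural one-sorted ring/valuation language. The case of a complete $\cL_\RV$-theory of a henselian valued field of characteristic $0$ is item (5) of Examples \ref{examples}, noting that relative quantifier elimination down to the $\RV$-sort together with the fact that preimages of $\RV$-points are open balls gives condition (ii) of assumption $(\mathbf{A})$ (see also the second bullet of Remark \ref{rem:sorts}).

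Having listed each case as an open $\cL$-theory of topological fields, Theorem \ref{thm:opencoregen} applies and yields that every $\cL_\delta$-definable open set in a model of $T_\delta^*$ is already $\cL$-definable, which is precisely the statement that $T_\delta^*$ has $\cL$-open core. No step presents a real obstacle here; the only thing one might wish to double-check is that $T_\delta^*$ is non-trivially consistent in each case so that the conclusion is not vacuous, and this has been established in Corollary \ref{cor:cons_examples}. Thus the corollary follows by a one-line application of Theorem \ref{thm:opencoregen} case-by-case. \qed
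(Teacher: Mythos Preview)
Your proof is correct and follows essentially the same approach as the paper, which simply notes that all the listed theories are open $\cL$-theories of topological fields by Examples \ref{examples} and then invokes Theorem \ref{thm:opencoregen}. Your treatment is in fact slightly more careful: you explicitly unpack the $\CODF$ case as $\RCF_\delta^*$ with base theory $\RCF$, and you point to Corollary \ref{cor:cons_examples} for consistency, both of which the paper leaves implicit.
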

\begin{proof} All these theories are open $\cL$-theories of topological fields (see Examples \ref{examples}) and by Theorem \ref{thm:consistency} $T_\delta^*$ is consistent. 
\end{proof}

\subsection{$\delta$-Cell decomposition}\label{sec:delta-cell-decomp}

Let us start by defining $\delta$-cells. 

\begin{definition}[$\delta$-cells]\label{def:delta-cell} A subset $X\subseteq K^n$ is a $\delta$-cell if there are $\bar d\in \N^n$ and an $\cL$-definable cell $Y$ (as defined in Definition \ref{def:cells}) such that $X=\J_{\bar d}^{-1}(Y)$ and $\overline{\J_{\bar d}(X)}=\overline{Y}$.  
\end{definition}

\begin{theorem}[$\delta$-Cell decomposition]\label{delta-cell-decom} Let $\cK$ be a model of $T_{\delta}^*$. Then, for every $n\geqslant 1$, every $\cL_\delta$-definable set $X\subseteq K^n$ is a disjoint union of finitely many $\delta$-cells. 
\end{theorem}

\begin{proof} By Theorem \ref{thm:opencoregen}, $T_\delta^*$ has $\cL$-open core.
By Proposition \ref{thm:fermeture} and Lemma \ref{lem:up-down}, we may associate to $X$ a linked triple $(X, Y, \bar d)$ with $\bar d=o(X)=(d_1,\ldots,d_n)$. We proceed by induction on $\dim(Y)$. 
If $\dim(Y)=0$, then $Y$ is a cell and hence $X$ is already a $\delta$-cell. Suppose $\dim(Y)>0$. 

By cell decomposition (Theorem \ref{thm:newCD}), $Y$ can be expressed as a finite disjoint union of cells $Z_{i}$, $i\in I$. Let $I_{0}\coloneqq \{i\in I\colon \dim(Z_{i})=\dim(Y)\}$ and let $I_{1}\coloneqq I\setminus I_{0}$. Let $X_{1}\coloneqq \J_{\bar d}^{-1}(\bigcup_{i\in I_{1}} Z_{i})$ and for $i\in I_{0}$, let $X_{i}\coloneqq \J_{\bar d}^{-1}(Z_{i})$. We show the result for $X_1$ and $X_i$ for $i\in I_0$. 

For $X_1$, we apply the induction hypothesis. Indeed, by the $\cL$-open core, for $Y'=\overline{\J_{\bar d}(X_1)}\cap \bigcup_{i\in I_1} Z_i$, $(X_{1},Y',\bar d)$ is a linked triple. In addition, $\dim(Y')\leqslant\dim(\overline{\J_{\bar d}(X_1)}) \leqslant \dim(\overline{\bigcup_{i\in I_{1}} Z_{i}})<\dim(Y)$, and the result follows by induction. 

For $j\in I_{0}$, let us show that $X_{j}$ is already a $\delta$-cell. It amounts to show that $\overline{\J_{\bar d}(X_{j})}=\overline{Z_{j}}$. One inclusion is clear, so let us show that $\overline{Z_{j}}\subseteq \overline{\J_{\bar d}(X_{j})}$. It suffices to show that for $z\in Z_{j}$, every open neighbourhood of $z$ intersects $\J_{\bar d}(X_{j})$. First let us show the following claim.
\begin{claim}\label{claim:density_new}
Let $z\in Z_j$, then for every open neighbourhood $U$ of $z$, there is $x\in X$ such that $\bd^{\bar d}(x)\in \overline{Z_j}\cap U$.
\end{claim}
Suppose for a contradiction that there is an open neighbourhood $U$ of $z$ such that $U\cap \overline{Z_{j}}\cap\J_{\bar d}(X)=\emptyset$. We split in two cases:

\emph{Case 1:} Suppose that $U\cap Z_{j}\subseteq \bigcup_{i\in I\setminus \{j\}} \overline{Z_{i}}$. Since the $(Z_i)_{i\in I}$ are disjoint, we have $U\cap Z_{j}\subseteq \bigcup_{i\in I\setminus \{j\}} \overline{Z_{i}}\setminus Z_{i}$. This contradicts that $\dim(U\cap Z_{j})=\dim(Z_{j})=\dim(Y)$ and $\dim(\bigcup_{i\in I} \overline{Z_{i}}\setminus Z_{i})<\dim(Y)$.

\medskip

\emph{Case 2:} Suppose there is $z'\in U\cap Z_{j}\setminus(\bigcup_{i\in I\setminus \{j\}} \overline{Z_{i}})$. Let $U'\subseteq U$ be an open neighbourhood of $z'$ such that $U'\cap \overline{Z_i}=\emptyset$  for every $i\in I\setminus \{j\}$. Since $z'\in Z_j$ and $(X,Y, \bar d)$ is a linked triple, there is $x\in X$ such that $\bd^{\bar d}(x)\in U'\cap \overline{Y}$. By the choice of $U'$, $\bd^{\bar d}(x)\in \overline{Z_j}$, but $U'\subseteq U$, so we contradict the assumption on $U$. This completes the claim. 

\medskip

Let $\rho_{Z_j}$ be the projection associated to the cell $Z_j$ such that $Z_j$ is the graph along $\rho_{Z_j}$ of a continuous correspondence $f_{Z_j}\colon \rho_{Z_j}(Z_j) \to K^{(\bar{d}+1)-\dim(Z_j)}$ (see Definition \ref{def:cells}). To conclude, let $U$ be an open neighbourhood of $z$. Without loss, we may assume that $\rho_{Z_j}(U)\subseteq \rho_{Z_j}(Z_j)$. By Claim \ref{claim:density_new}, $U$ contains an element $\bd^{\bar d}(x)\in  \overline{Z_{j}}$ with $x\in X$. Let us show that $\bd^{\bar d}(x)\in Z_{j}$. Note that if $\rho_{Z_j}$ is the identity, the result follows directly. For notational simplicity, suppose $\rho_{Z_j}$ is onto the first $m$-coordinates for some $m<\sum_{i=1}^n(d_i+1)$. Recall that $\rho_{Z_j}^{\perp}$ denotes the complement projection to $\rho_{Z_j}$. Let $w=\rho_{Z_j}(\bd^{\bar d}(x))$. Since $\rho_{Z_j}(U)\subseteq \rho_{Z_{j}}(Z_{j})$, it suffices to show that $\rho_{Z_j}^\perp(\bd^{\bar d}(x))\in f_{Z_j}(w)$. For suppose not. Then, since the topology is Hausdorff, there are disjoint open sets $V_1$ and $V_2$ such that $V_1$ contains $\{(w, y) : y\in f_{Z_j}(w)\}$ and $V_2$ contains $\bd^{\bar d}(x)$. Since $w\in \rho_{Z_j}(V_1)\cap \rho_{Z_j}(V_2)$, we may suppose without loss that $\rho_{Z_j}(V_1)= \rho_{Z_j}(V_2)$. This contradicts the continuity of $f_{Z_j}$ and the fact that $\bd^{\bar d}(x)\in \overline{Z_j}$. 
\end{proof}

\begin{remark}\label{rem:delta-CD} In \cite{BMR}, the authors proved a cell decomposition theorem for $\CODF$ (see \cite[Theorem 4.9]{BMR}). According to their definition, $\delta$-cells are $\cL_\delta$-definable sets of the form $X=\J_{\bar{d}}^{-1}(Y)$ where $Y$ an o-minimal cell (of $\RCF$). They do not require however the density condition $\overline{\J_{\bar{d}}(X)}=\overline{Y}$ present in Definition \ref{def:delta-cell}. Although our definition of $\cL$-definable cells does not coincide with o-minimal cells, o-minimal cells are cells in the sense of Definition \ref{def:cells}, so the proof of Theorem \ref{delta-cell-decom} applies word for word replacing our cells by o-minimal cells. 
\end{remark}

\subsection{Transfer of elimination of imaginaries} \label{sec:EI}

In this section, we show how to transfer elimination of imaginaries from $T$ to $T_\delta^*$. Our proof relies on properties of the topological dimension in models of $T$. The strategy we employ has been previously used by M. Tressl in the case of $\CODF$. For background facts on the elimination of imaginaries, we refer to \cite[Section 8.4]{tent-ziegler2012}. We will need the following two lemmas.

\begin{lemma}\label{lem:order_EI} Let $X\subseteq K^n$ be an $\cL_\delta$-definable set. Let $\bar m_1, \bar m_2\in \N^n$. If $\bar m_1\leqslant \bar m_2$ then
\[
\Dim(\overline{\J_{\bar m_1}(X)})\leqslant \Dim(\overline{\J_{\bar m_2}(X)}).
\]
\end{lemma}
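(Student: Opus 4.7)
The plan is to exploit the coordinate projection $\pi^{m_2}_{m_1} \colon K^{n(m_2+1)} \to K^{n(m_1+1)}$ (as in the notation before Proposition \ref{thm:fermeture}) and the fact that such a projection both commutes with the prolongation operators and cannot increase topological dimension.

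First, I would observe the tautological identity $\pi^{m_2}_{m_1}(\J_{m_2}(X)) = \J_{m_1}(X)$, which holds by inspection since applying $\pi^{m_2}_{m_1}$ to the tuple $(\delta^0(a_i),\ldots,\delta^{m_2}(a_i))_{i}$ precisely produces $(\delta^0(a_i),\ldots,\delta^{m_1}(a_i))_{i}$. Since $\pi^{m_2}_{m_1}$ is continuous, we obtain
\[
\J_{m_1}(X) = \pi^{m_2}_{m_1}(\J_{m_2}(X)) \subseteq \pi^{m_2}_{m_1}(\overline{\J_{m_2}(X)}),
\]
and therefore $\overline{\J_{m_1}(X)} \subseteq \overline{\pi^{m_2}_{m_1}(\overline{\J_{m_2}(X)})}$.

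Next, I would invoke the basic properties of the topological dimension collected in Proposition \ref{prop:conseAA2}: property (D3) gives $\Dim(\overline{A}) = \Dim(A)$ for any definable $A$, so
\[
\Dim(\overline{\J_{m_1}(X)}) \leq \Dim\bigl(\pi^{m_2}_{m_1}(\overline{\J_{m_2}(X)})\bigr).
\]
Then I would use the fact that a coordinate projection never increases topological dimension. This follows directly from the definition of $\Dim$ in Section \ref{sec:topfields}: any coordinate projection $\rho \colon K^{n(m_1+1)} \to K^\ell$ composes with $\pi^{m_2}_{m_1}$ to give a coordinate projection $K^{n(m_2+1)} \to K^\ell$, so any $\ell$ witnessing the dimension of $\pi^{m_2}_{m_1}(\overline{\J_{m_2}(X)})$ also witnesses the dimension of $\overline{\J_{m_2}(X)}$. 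Hence
\[
\Dim\bigl(\pi^{m_2}_{m_1}(\overline{\J_{m_2}(X)})\bigr) \leq \Dim\bigl(\overline{\J_{m_2}(X)}\bigr),
\]
and chaining the two inequalities yields the desired conclusion.

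There is no real obstacle here, since the argument only uses the continuity of coordinate projections, the compatibility $\pi^{m_2}_{m_1}\circ \J_{m_2}=\J_{m_1}$, and the standard dimension-theoretic facts already recorded in Proposition \ref{prop:conseAA2}. The only point that warrants explicit verification is that coordinate projections do not increase the topological dimension, and that is immediate from the definition.
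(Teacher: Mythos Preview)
Your proof is correct and follows essentially the same approach as the paper's: both exploit the coordinate projection $\pi^{m_2}_{m_1}$ and the fact that composing with a further coordinate projection to $K^\ell$ again yields a coordinate projection, so any $\ell$ witnessing the dimension on the $m_1$-side also witnesses it on the $m_2$-side. The paper's proof is just a terser version of your last step, applied directly to $\overline{\J_{m_1}(X)}$ rather than routing through $\pi^{m_2}_{m_1}(\overline{\J_{m_2}(X)})$ and (D3).
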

%\begin{proof} Let $\pi\colon \overline{\J_{\bar m_1}(X)}\to K^\ell$ be a projection such that $\pi(\overline{\J_{\bar m_1}(X)})$ has non-empty interior. Then, letting $\rho$ denote the projection from $\overline{\J_{\bar m_2}(X)}$ onto $\overline{\J_{\bar m_1}(X)}$, we have that $\pi\circ\rho(\overline{\J_{\bar m_2}(X)})$ has non-empty interior. 
%\end{proof}

\begin{proof} Let $\pi\colon \overline{\J_{\bar m_1}(X)}\to K^\ell$ be a projection such that $\pi(\overline{\J_{\bar m_1}(X)})$ has non-empty interior. Then, $\pi\circ\pi_{\bar{m}_1}^{\bar{m}_2}(\overline{\J_{\bar m_2}(X)})$ has non-empty interior. 
\end{proof}

\begin{lemma}\label{prop:fini} Let $X\subseteq \cK^{(x,z)}$ be a definable set where $x$ is a tuple of $\bF$-variables and $z$ is a tuple of auxiliary sort variables. Suppose that for every $b\in \cK^z$, the fiber $X_b$ is finite. Then $X$ is $\cL$-definable.   
\end{lemma}

The main ingredient of the proof of the above result is Lemma \ref{lem:envelop}. However it is a bit technical and so we chose to postpone it after stating (and proving) the main result of the section. 

\

Let $\G$ be a collection of sorts of $\cL^{\mathrm{eq}}$. We let $\cL^{\G}$ denote the restriction of $\cL^{\eq}$ to the field sort together with the new sorts in $\G$. The following proof will make use of properties (D1)-(D3) of the topological dimension listed in Proposition \ref{prop:conseAA2}). 

\begin{theorem}\label{thm:EI} Suppose that $T$ admits elimination of imaginaries in $\cL^\G$. Then the theory $T^*_{\delta}$ admits elimination of imaginaries in $\cL_{\delta}^\G$. 
\end{theorem}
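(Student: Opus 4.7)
The overall plan is to show that every $\cL^{\G}_{\delta}$-imaginary admits a canonical parameter in $\cL^{\G}_{\delta}$, by reducing the problem to coding of $\cL$-imaginaries via the open core (Theorem \ref{thm:opencoregen}) and the uniform linked-triple construction supplied by Lemma \ref{lem:envelop}.

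Let $K\models T^*_{\delta}$ be sufficiently saturated and let $E$ be an $\cL^{\G}_{\delta}$-definable equivalence relation on a sort product $S$. Using Corollary \ref{cor:induced}, which says that the derivation introduces no new structure on auxiliary sorts, I would first reduce to the case where $S=K^n$ is a power of the field sort, so that $a$ is a field-sort tuple. For such an $E$, consider the graph $X=\{(x,y):E(x,y)\}\subseteq K^n\times K^n$. Applying the multivariable version (deduced by iteration) of Lemma \ref{lem:envelop} to $X$, I would extract an integer $d$ and a single $\cL^{\G}$-formula $\psi(\bar u,\bar v)$ such that the fiber $\mathcal Y_a:=\{\bar v:\psi(\J_d(a),\bar v)\}$ forms, together with $X_a:=[a]_E$ and $d$, a linked triple; in particular the $\cL^{\G}$-definable closed set $C_a:=\overline{\mathcal Y_a}=\overline{\J_d(X_a)}$ depends only on $X_a$, hence only on $[a]_E$.

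Using EI for $T$ in $\cL^{\G}$, I would associate to each $a$ the canonical parameter $c_a$ (in the $\G$-sorts of $K^{\mathrm{eq}}$) of $C_a$. By construction the map $a\mapsto c_a$ is $\cL^{\G}_{\delta}$-definable (because $\J_d$ is $\cL_{\delta}$-term-definable) and constant on $E$-classes. To conclude, it remains to prove that $c_a=c_b$ implies $E(a,b)$. For this I would augment $c_a$ by adjoining further $\cL^{\G}$-data recording a cell decomposition of $C_a$ (Theorem \ref{thm:newCD}) — a canonical choice up to the finite data of the decomposition, which can be coded via EI of $T$ together with the usual coding of finite sets — and then use the $\delta$-cell decomposition (Theorem \ref{delta-cell-decom}) together with Lemma \ref{lem:Ldens} to argue cell by cell that the density of $\J_d(X_a)$ in each top-dimensional cell of $C_a$ forces $X_a$ and $X_b$ to agree on the corresponding pullback under $\J_d$, whence $X_a=X_b$.

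The main obstacle is this last separation step: showing that the canonical $\cL^{\G}$-code of the prolongation envelope (together with its cell data) is a complete invariant of the $E$-class. The closure $\overline{\J_d(X_a)}$ by itself is not enough to recover $X_a$ — modifications on a lower-dimensional part leave $C_a$ unchanged — so the argument has to combine the strong density property built into the linked triples of Lemma \ref{lem:envelop} with the cell decomposition, using Lemma \ref{lem:Ldens} to transfer density from the envelope down to the actual class. Once this is settled, the function $a\mapsto c_a$ will provide the desired $\cL^{\G}_{\delta}$-definable canonical parameter for $[a]_E$, establishing elimination of imaginaries for $T^*_{\delta}$ in $\cL^{\G}_{\delta}$.
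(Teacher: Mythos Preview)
Your proposal has a genuine gap at precisely the point you flag as ``the main obstacle.'' You correctly observe that the closure $C_a=\overline{\J_d(X_a)}$ does not determine $X_a$, but your proposed remedy---adjoining a code for a cell decomposition of $C_a$ and invoking density on top-dimensional cells---cannot close the gap. A cell decomposition of $C_a$ is data about $C_a$, not about $X_a$; two distinct $E$-classes $X_a\neq X_b$ with the same prolongation closure will have the \emph{same} cell decomposition of that closure, and density of $\J_d(X_a)$ in the top-dimensional cells is automatic from the linked-triple property, so it does not distinguish them either. Whatever discrepancy between $X_a$ and $X_b$ there is lives in lower-dimensional pieces, and nothing in the cell structure of the common closure records it. There is also no canonical cell decomposition to code in the first place.

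The paper's argument avoids trying to produce a single-shot invariant and instead proceeds by induction on $\dim(\overline{\J_{o(X)}(X)})$. Given $X$, set $\widetilde{X}\coloneqq\J_{o(X)}^{-1}(\overline{\J_{o(X)}(X)})$. By open core the closure is $\cL$-definable, hence has a code $e_2$ in $\cL^{\G}$ by EI for $T$; and $e_2$ determines $\widetilde{X}$. The key computation (using a linked triple $(X,Z,o(X))$ and property (D3) of dimension) is that $\dim(\overline{\J_{o(X)}(\widetilde{X}\setminus X)})<\dim(\overline{\J_{o(X)}(X)})$, so by induction $\widetilde{X}\setminus X$ has a code $e_1$. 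Then $(e_1,e_2)$ codes $X$, since $X=\widetilde{X}\setminus(\widetilde{X}\setminus X)$. This recursive coding of the lower-dimensional ``error term'' is exactly the idea your argument is missing: rather than enriching the closure data to make it faithful, one codes the closure \emph{and} the residual set, the latter handled by the inductive hypothesis.
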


\begin{proof} Fix a sufficiently saturated model $\cK$ of $T_\delta^*$. For the sake of exposition, we first assume $\cL$ is one-sorted and later explain how to modify the proof to obtain the general case. 

Let $X\subseteq K^n$ be a non-empty $\cL_{\delta}$-definable set. It suffices to show that $X$ has a code in $\cL_{\delta}^\G$ (that is, an element $e\in \G$ such that $\sigma(e)=e$ if and only if $\sigma(X)=X$ for every $\cL_\delta$-automorphism $\sigma$ of $\cK$). Observe that every $\cL$-definable set has a code in $\cL^\G$, and therefore a code in $\cL_\delta^\G$, as the $\cL_\delta$-automorphism group of $\cK$ is a subgroup of the $\cL$-automorphism group of $\cK$. Consider the set $\widetilde{X}\supseteq X$ defined by 
\[
\widetilde{X}\coloneqq \J_{o(X)}^{-1}(\overline{\J_{o(X)}(X)}).
\]
By Theorem \ref{thm:opencoregen}, $T_\delta^*$ has $\cL$-open core, so the set $\overline{\J_{o(X)}(X)}$ is $\cL$-definable. We proceed by induction on $\Dim(\overline{\J_{o(X)}(X)})$. If $\Dim(\overline{\J_{o(X)}(X)})= 0$, then $X$ is finite (by (D1)) and in particular $\cL$-definable, so it has a code in $\cL_{\delta}^\G$. To show the inductive step we need the following claim: 
\begin{claim}\label{cla:dim} $\Dim(\overline{\J_{o(X)}(\widetilde{X}\setminus X)}) < \Dim(\overline{\J_{o(X)}(X)})$.
\end{claim}
\noindent Suppose the claim holds. Since $o(\widetilde{X}\setminus X)\leqslant o(X)$, by Lemma \ref{lem:order_EI}, we have that 
\[
\Dim(\overline{\J_{o(\widetilde{X}\setminus X)}(\widetilde{X}\setminus X)}) \leqslant \Dim(\overline{\J_{o(X)}(\widetilde{X}\setminus X)}). 
\]
Therefore, by Claim \ref{cla:dim} and the induction hypothesis, let $e_{1}$ be a code for $\widetilde{X}\setminus X$. By the previous observation, let $e_{2}$ be a code for $\overline{\J_{o(X)}(X)}$ (which is $\cL$-definable by the $\cL$-open core hypothesis). It is an easy exercise to show that $e=(e_{1},e_{2})$ is a code for $X$.

It remains to prove the claim. By the $\cL$-open core property
%assumption 
and Proposition \ref{thm:fermeture}, let $(X, Z, o(X))$ be a linked triple. Applying (D3), we have 
\begin{align*}
\Dim(\overline{\J_{o(X)}(\widetilde{X}\setminus X)}) & =  \Dim(\overline{\J_{o(X)}(\J_{o(X)}^{-1}(\overline{\J_{o(X)}(X)})\setminus X)}) \\
								& =  \Dim(\overline{\J_{o(X)}(\{x\in K^n : \J_{o(X)}(x)\in \overline{Z}\}\setminus X)}) \\  
								& =  \Dim(\overline{\J_{o(X)}(\{x\in K^n : \J_{o(X)}(x)\in \overline{Z}\setminus Z\})}) \\
								& \leqslant \Dim(\overline{Z}\setminus Z) < \Dim(\overline{Z})= \Dim(\overline{\J_{o(X)}(X)}). 
\end{align*}
This completes the proof in the one-sorted case. 

For the general case, we need to code
%, more generally, 
a non-empty definable subset $X\subseteq K^n\times \cK^z$ where $z$ is a tuple of auxiliary sort variables. Note that if $n=0$ then, by Corollary \ref{cor:induced}, $X$ is already $\cL$-definable and has therefore a code. Thus we may suppose $n\geqslant 1$. We proceed analogously, using the following notion of dimension on general $\cL$-definable sets: for $X$ as above and $Z$ the projection of $X$ onto the $z$-variables, we set
\[
\dim(X) \coloneqq \max \{\dim(X_b) : b\in Z\}. 
\]
If $\dim(X)=0$, this means that for all $z\in Z$, the fiber $X_z$ is finite. Then, by Lemma \ref{prop:fini}, $X$ is $\cL$-definable and has a code. For the inductive step, define for any $\bar d\in \N^n$, the operations $\J_{\bar d}(X)$ (resp. for $\J_{\bar d}^{-1}$) and $\overline{X}$ 
fiber-wise, that is, 
\[
\J_{\bar d}(X) \coloneqq \bigcup_{b\in Z} \J_{\bar d}(X_b)\times\{b\} \hspace{1cm} \overline{X} \coloneqq \bigcup_{b\in Z} \overline{X_b}\times\{b\} . 
\]
The proof now follows exactly the same strategy as in the one-sorted case, after showing the corresponding version of Claim \ref{cla:dim} and property (D3) for this new notion of dimension. However, note that both properties follow easily since they hold fiber-wise. 
\end{proof}

\begin{corollary}\label{cor:EI_examples} Let $\G$ denote the geometric language of valued fields. The theories $(\ACVF_{0,p})_\delta^*$, $\RCVF_\delta^*$ and $\PCF_\delta^*$ have elimination of imaginaries in $\cL_\delta^\G$. 
\end{corollary}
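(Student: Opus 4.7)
The corollary is an immediate application of Theorem \ref{thm:EI}, so the plan is simply to verify its hypotheses in each of the three cases. By Examples \ref{examples}, the theories $\ACVF_{0,p}$, $\RCVF$ and $\PCF$ are all open $\cL$-theories of topological fields (for the appropriate one-sorted language $\cL$ of valued fields), and by Corollary \ref{cor:cons_examples} their $T_\delta^*$-extensions are consistent. Hence Theorem \ref{thm:EI} can be applied once we know that each of $T\in\{\ACVF_{0,p}, \RCVF, \PCF\}$ eliminates imaginaries in the corresponding geometric language $\cL^\G$.

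The plan is therefore to invoke the three known elimination of imaginaries results in the geometric language: for $\ACVF_{0,p}$ this is the theorem of Haskell--Hrushovski--Macpherson from \cite{HHM2006}; for $\RCVF$ it follows from the work of Mellor (together with the $\ACVF$ result, by passing to the algebraic closure); and for $\PCF$ it is the recent result of Hrushovski--Martin--Rideau. In each case the geometric sorts $\G$ are exactly those needed to code lattices and torsors of their residues, so the language $\cL^\G$ makes sense uniformly.

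With these inputs in hand, the proof is a one-line application: for each $T$ above, $T$ eliminates imaginaries in $\cL^\G$, so Theorem \ref{thm:EI} produces elimination of imaginaries for $T_\delta^*$ in the language $\cL_\delta^\G$. There is no real obstacle here since the technical work (the transfer result and the $\cL$-open core on which it rests) has already been carried out in Theorem \ref{thm:EI} and Theorem \ref{thm:opencoregen}; the corollary merely records that the geometric language of valued fields is an instance of a collection $\G$ of $\cL^{\eq}$-sorts for which elimination of imaginaries is already available in the non-differential setting.
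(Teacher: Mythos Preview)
Your proposal is correct and follows essentially the same route as the paper: verify that each of $\ACVF_{0,p}$, $\RCVF$, $\PCF$ is an open $\cL$-theory of topological fields with consistent $T_\delta^*$, cite the respective elimination of imaginaries results in the geometric language (Haskell--Hrushovski--Macpherson, Mellor, Hrushovski--Martin--Rideau), and apply Theorem~\ref{thm:EI}. The paper's proof is the same one-line application, additionally invoking Corollary~\ref{cor:opencore} explicitly, though as you observe this is already absorbed into the hypotheses and proof of Theorem~\ref{thm:EI}.
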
 

\begin{proof} Let $T$ be either $\ACVF_{0,p}$, $\RCVF$ or $\PCF$. The theory $T$ has elimination of imaginaries in $\cL^\G$ by results of Haskell, Hrushovski and Macpherson for $\ACVF$ \cite{HHM2006}, of Mellor for $\RCVF$ \cite{mellor2006}, and of Hrushovski, Martin and Rideau for $\PCF$ \cite{HMR2018}. By Corollary \ref{cor:opencore}, $T_\delta^*$ has $\cL$-open core. The result follows by Theorem \ref{thm:EI}. 
\end{proof}

{\bf Proof of Lemma \ref{prop:fini}.}
%\begin{proof} 
Let $\varphi(x,z)$ be an $\cL_\delta(\cK)$-formula defining $X$ and $x=(x_1,\ldots, x_{n+1})$ with $n\geqslant 0$. By Lemma \ref{lem:deltaniceform}, letting $\ord(\varphi)=(m_1,\ldots,m_{n+1})\in \N^{n+1}$ and for $m=\max\{m_i\colon 1\leqslant i\leqslant n+1\}$, the formula  $\varphi(x,z)$ is equivalent to a finite disjunction of $\cL_\delta(\cK)$-formulas of the form 
\begin{equation}\label{eq:formula}
\cZ_{\cA}^{S}(x) \wedge \theta(\bd^{m}(x),z),     
\end{equation}
where $\theta$ is an $\cL(\cK)$-formula such that for every each $b\in \cK^z$, $\theta(\cK,b)$ is an open set and either 
\begin{enumerate}
    \item $\cA\subseteq K\{x_1,\ldots, x_n\}$ or 
    \item $\cA$ contains only one differential polynomial $P$ with
    $\ord_{x_{n+1}}(P)=m_P\leqslant m$, $\ord_{x_j}(P)\leqslant 2m$, $1\leqslant j\leqslant n$ 
    and $s_P$ divides $S$. 
\end{enumerate}
Without loss of generality we may assume $\varphi(x,z)$ is already the formula given in \eqref{eq:formula}. In addition, since each $X_b$ is finite, case (1) above cannot hold, so we must be in case (2). 

By induction on $n$, we show there are differential polynomials $P_j\in K\{x_1,\ldots, x_j\}$ for $j\in\{1,\ldots, n+1\}$ such that 
\begin{equation}\label{eq:poly}
    T_\delta^*\models (\forall z)(\forall x)(\varphi(x, z)\rightarrow
\bigwedge_{j=1}^{n+1} P_j(x_1,\ldots x_j)=0\wedge s_{P_j}(x_1,\ldots,x_j)\neq 0),
\end{equation}
and an $\cL(\cK)$-formula $\tilde{\varphi}(\bar x, z)$ where $\bar x=(\bar x_1,\cdots,\bar x_{n+1})$, $\bar{x}_j=(x_j,x_{j,1}, \ldots, x_{j, k_{j}})$ with $k_{j}\coloneqq \ord_{x_j}(P_j)$, $1\leqslant j\leqslant n+1$, such that
\begin{equation}\label{eq:formule2}
T_\delta^*\models (\forall z)(\forall \bar x)(\tilde{\varphi}(\bar x, z)\leftrightarrow (\varphi(x, z)\wedge \bigwedge_{j=1}^{n+1} \bd^{k_{j}}(x_j)=\bar{x}_j)).
\end{equation}
This implies the statement, since $X$ is defined by the $\cL(\cK)$-formula 
\[
(\exists x_{1,1})\cdots(\exists x_{1, k_{1}})\cdots(\exists x_{n+1,1})\ldots(\exists x_{n+1, k_{n+1}})\tilde{\varphi}(\bar{x},z).
\]
Suppose $n=0$. Set $P_1\in K\{x_1\}$ to be the differential polynomial $P$ from (2) above. The choice of $P_1$ ensures already \eqref{eq:poly}. Define $\tilde{\varphi}(\bar{x}_1,z)$ to be the $\cL(\cK)$-formula given by 
\begin{align*}
P_1^*(\bar{x}_1)=0\wedge S^*(g_1^{\ord(S)}(\bar{x}_1))\neq 0\wedge
\theta(g_1^{m}(\bar{x}_1),z),
\end{align*}
where for any $d\geqslant 0$, $g_1^d(\bar{x}_1)$ is the definable function given by the prolongation of length $d$ (possibly truncation depending on $d$) of $\bar{x}_1$ with respect to $P_1$, namely, 
\[
g_1^d(\bar{x}_1)=
\begin{cases}
(x_1,x_{1,1}, \ldots, x_{1,d}) & \text{ if $d\leqslant k_{1}$} \\
(\bar{x}_1,(f_1^{P_1})^*(\bar{x}_1),\ldots, (f_{d-k_{1}}^{P_1})^*(\bar{x}_1)) & \text{ if $d> k_{1}$,} 
\end{cases}
\]
where $f_i^{P_1}$, $i\geq 1$, is defined as in Lemma-Definition \ref{lemdef:ratioprolong}. In particular, if $P_1(x_1)=0$, $s_{P_1}(x_1)\neq 0$ and $\bar{x}_1=\bd^{k_1}(x_1)$, then $\bd^{d}(x_1)=g_1^d(\bar{x}_1)$ for every $d\geqslant 0$. Let us show $\tilde{\varphi}$ satisfies \eqref{eq:formule2}. The implication from right to left follows from \eqref{eq:poly} and the definition of the functions $g_1^d$. For the converse, let $\bar{a}_1=(a_1,a_{1,1},\ldots, a_{1, k_{1}})$ and suppose that $\tilde{\varphi}(\bar{a}_1,b)$ holds for $b\in \cK^{z}$. By the axiom scheme (DL) and similarly to the argument given in Lemma \ref{lem:envelop}, for any neighbourhood of $\bar{a}_1$ there is an element $u\in K$ such that 
\[
P(u)=0 \wedge S(u)\neq 0\wedge \theta(\bd^m(u),b)
\]
holds. Since the topology is Hausdorff and $X_b$ is finite, we must have that $\bar{a}_1$ is of the form $\bd^{k_{1}}(a_1)$ and therefore, by Lemma-Definition \ref{lemdef:ratioprolong}, $\varphi(a,b)$ holds. 

Suppose $n>0$. Consider the $\cL_\delta(\cK)$-formula $\xi(x_1,\ldots, x_n,z)$ given by $(\exists x_{n+1}) \varphi(x,z)$. Note that for each $b\in \cK^z$, the fiber $\xi(\cK,b)$ is finite. Therefore, by induction, there are polynomials $P_j\in K\{x_1,\dots, x_j\}$ for $1\leqslant j\leqslant n$ satisfying the corresponding $\eqref{eq:poly}$ with respect to $\xi$, and an $\cL(\cK)$-formula $\tilde{\xi}(\bar{x}_1,\ldots, \bar{x}_n,z)$ satisfying the corresponding \eqref{eq:formule2}. We set $P_{n+1}$ as the differential polynomial $P$ from (2) above. Since $\varphi(x,z)$ implies $\xi(x_1,\ldots, x_n,z)$, the choice of $P_{n+1}$ and the induction hypothesis ensure that \eqref{eq:poly} holds. Define the formula $\tilde{\varphi}(x,z)$ as  
\begin{align*}
& P_{n+1}^*(g_1^{\ord_{x_1}(P)}(\bar{x}_1), \ldots,  g_{n}^{\ord_{x_{n}}(P)}(\bar{x}_1, \ldots, \bar{x}_{n}), \bar{x}_{n+1})=0 \ \wedge \\ 
& S^*(g_1^{\ord_{x_1}(S)}(\bar{x}_1), \ldots,  g_{n+1}^{\ord_{x_{n+1}}(S)}(\bar{x}_1, \ldots, \bar{x}_{n+1}))\neq 0 \ \wedge \\  
& \theta(g_1^{m}(\bar{x}_1), \ldots, g_{n+1}^{m}(\bar{x}_1, \ldots, \bar{x}_{n+1}),z) \wedge \tilde{\xi}(\bar{x}_1,\ldots, \bar{x}_n, z)
\end{align*}
where for each $d\geqslant 0$ and each $1\leqslant j\leqslant n+1$, the function $g_j^d(\bar{x}_1,\ldots, \bar{x}_j)$ is inductively defined and corresponds to 
the prolongation of length $d$ (possibly truncation depending on $d$) of $\bar{x}_j$ with respect to $P_j$ and $g_1^{d'}(\bar{x}_1),\ldots, g_{j-1}^{d'}(\bar{x}_1, \ldots,\bar{x}_{j-1})$ for all $d'\geqslant 0$ (we omit the formulas, for simplicity). As before, for each $1\leqslant j\leqslant n+1$, if $\bigwedge_{i=1}^j \bd^{k_j}(x_j)= \bar{x}_j$, then $\bd^d(x_j) = g_j^d(\bar{x}_1,\ldots, \bar{x}_j)$ for all $d\geqslant 0$ (whenever $\bigwedge_{j=1}^{n+1} P_j(x_1,\ldots, x_j)=0\wedge s_{P_j}(x_1,\ldots,x_j)\neq 0$). It remains to show \eqref{eq:formule2}. The implication from right to left follows by induction using in addition \eqref{eq:poly} and the definition of $g_j^d$ for all $1\leqslant j\leqslant n+1$ (note that the $\cL_\delta(\cK)$-formula $\cZ_{\cA\setminus\{P\}}(x_1,\ldots, x_n)$ is implied by $\xi$). For the converse, suppose that $\tilde{\varphi}(\bar{a},b)$ holds for some $b\in \cK^{z}$. Then 
$\tilde{\xi}(\bar{a}_1,\ldots, \bar{a}_n, b)$ holds and by induction we have that $\bar{a}_j=\bd^{k_{j}}(a_j)$ for all $1\leqslant j\leqslant n$. In particular, $\bd^d(a_j)=g_j^d(\bar{a}_1,\ldots, \bar{a}_j)$ for every $d\geqslant 0$ and $1\leqslant j\leqslant n$. As in the case $n=0$, by the axiom scheme (DL) and as in Lemma \ref{lem:envelop}, for any neighbourhood of $\bar{a}_{n+1}$ there is an element $u\in K$ such that 
\[
P(a_1,\ldots, a_n, u)=0 \wedge S(a_1,\ldots, a_n, u)\neq 0\wedge \theta(\bd^m(a_1), \ldots, \bd^m(a_n), \bd^m(u),b). 
\]
Since the topology is Hausdorff and $X_b$ is finite, we must have that $\bar{a}_{n+1}$ is of the form $\bd^{k_{n+1}}(a_{n+1})$. This shows $\varphi(a,b)$ holds. \qed
%\end{proof}

\section{Applications to dense pairs}\label{sec:app}

The study of pairs of models of a given complete theory is a classical topic in model theory (see \cite{Robinson}, \cite{Macintyre} and \cite{Dries1998} to mention just a few). New developments have enclosed many classical results in different abstract frameworks. Two such frameworks are the theory of lovely pairs of geometric structures developed by A. Berenstein and E. Vassiliev \cite{berenstein-vassiliev2010}, and the theory of dense pairs of theories with existential matroids developed by A. Fornasiero \cite{F}. In this section we will study the theory $T_P$ of dense pairs of models associated to a one-sorted 
open $\cL$-theory of topological fields $T$. Our goal is to show that such theory is closely related with the theory $T_\delta^*$. In Section \ref{sec:DenseP1}, we will recall the definition of the theory $T_P$ and show how it fits into the two above mentioned frameworks. In Section \ref{sec:DenseP2}, we will show how to use $T_{\delta}^*$ to deduce properties of $T_P$. Although most of the results gather in this section concerning $T_P$ are known, the proofs and methods will put in evidence the interesting connexion between the model theory of dense pairs and generic derivations.  

\subsection{Dense pairs of models of $T$}\label{sec:DenseP1}

Given that the literature of the model theory of pairs is quite extensive, we will unify references and cite \cite{F} even if particular cases of cited results where proven before by many different authors. 

Let $T$ be a complete one-sorted geometric $\cL$-theory $T$ extending the theory of fields (not necessarily an open $\cL$-theory of topological fields).\footnote{Fornasiero considers more generally the case where $T$ admits an \emph{existential matroid} (see \cite[Definition 3.25]{F}), but we will not need this level of generality in the present paper.} Given a model $\cM$ of $T$ and a definable subset $X\subseteq M$, we say that $X$ is {\it dense} if $X\cap U\neq \emptyset$ for every $M$-definable subset $U$ of $M$ of $\acl$-dimension 1 (see \cite[Definition 7.1]{F}). Let $\cL_P$ be the language of pairs, that is, defined as $\cL_{P}\coloneqq\cL\cup\{P\}$ for $P$ a new unary predicate. The theory $T_{P}$ of dense pairs of models of $T$ is defined as the $\cL_P$-theory of pairs $(\cK, P(K))$ such that $\cK\models T$, $P(K)$ is $\acl$-closed and dense in $K$ (in the above sense). Equivalently (by \cite[Lemma 7.4]{F}), it corresponds to the $\cL_P$-theory of pairs $(\cK, P(K))$ such that $\cK\models T$, $\langle P(K) \rangle_\cL\preccurlyeq_\cL \cK$ and $P(K)$ is dense in $K$. Among various model-theoretic results which are proven in \cite{F} about the theory $T_P$, what plays a crucial role in this section is the fact that $T_P$ is a complete theory \cite[Theorem 8.3]{F}.

Observe that when $T$ is a one-sorted open $\cL$-theory of topological fields, by Proposition \ref{prop:conseAA1}, $T$ is geometric and therefore, $T_P$ is complete. Note also that in view of Proposition \ref{prop:conseAA2}, the notion of density above defined coincides with the topological notion of density. It is not difficult to see that in this case the theory $T_P$ also coincides with the theory of lovely pairs of geometric theories introduced by A. Berenstein and E. Vassiliev in \cite{berenstein-vassiliev2010}. 

\subsection{Dense pairs and generic derivations}\label{sec:DenseP2}

Throughout this section we suppose $T$ is a one-sorted $\cL$-open theory of topological fields and $\cK$ be a model of $T_\delta^*$. The connection of $T_P$ with the theory $T_\delta^*$ arises via the field of constants $C_K$ of a model $\cK$ of $T_\delta^*$. Recall that by our definition of $T_\delta$, $\Q(\Omega)$ is a subfield of $C_K$. One can readily observe that when $\cK\models \CODF$, the pair $(\cK_{|\cL}, C_K)$ is a dense pair of real-closed fields. The following lemma shows this holds in general for $T_\delta^*$. For the reader's convenience, we give a proof here, following the references \cite[Lemma 7.4]{F}, \cite[Lemma 2.5]{berenstein-vassiliev2010} (see also \cite[Corollary 1.7]{BCPP2018}). 

\begin{lemma}\label{fact:model} The pair $(\cK_{|\cL},C_K)$ is a model of $T_P$ and if $\cK$ is $\vert T\vert^+$-saturated, then $(\cK_{|\cL},C_K)$ is a lovely pair of models of $T$.  
\end{lemma}
\begin{proof}
A direct consequence of the scheme (DL) is that $C_{K}\neq K$. Since $C_{K}$ is topologically dense in $K$ \cite[Lemma 3.12]{guzy-point2010}, $C_{K}$ is dense. So it remains to show that $C_{K}\preccurlyeq_\cL \cK_{|\cL}$. We apply Tarski-Vaught test. Let $\varphi(x,\bar y)$ be an $\cL$-formula and let $\bar b\in C_{K}$. By condition \ref{condA} on $T$, $\varphi(\cK,\bar b)$ is a finite union of finite subsets and open sets. Since $C_{K}$ is algebraically closed in $K$, either $\varphi(\cK,\bar b)\subseteq C_{K}$ or $\varphi(\cK,\bar b)$ contains an open subset. Since $C_{K}$ is topologically dense in $K$, we get the result. 
\end{proof}  

\begin{lemma}\label{lem:elext} For every model $(\cK,F)$ of $T_P$ there is an model $\cK^*$ of $T_\delta^*$ such that $(\cK,F)\preccurlyeq (K^*_{|\cL},C_{K^*})$.  
\end{lemma}

\begin{proof} Since $T_P$ is complete, by Lemma \ref{fact:model}, there is a model $\cK'$ of $T^*_{\delta}$ with constant field $F'$ such that $(\cK_{|\cL}',F')\equiv_{\cL_P} (\cK,F)$. The result follows by Keisler-Shelah's theorem. 
\end{proof}

Let us now show how the previous results allows us to transfer properties of $T_\delta^*$ to $T_P$ using the fact that, for models of $T_\delta^*$, every $\cL_P$-formula defines a set which is $\cL_\delta$-definable by replacing $P(t)$ by the formula $\delta(t)=0$.

\begin{corollary}\label{cor:delta_exist_inf} The theory $T_{P}$ eliminates $\exists^{\infty}$. 
\end{corollary}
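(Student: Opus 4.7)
The plan is to reduce the statement to the analogous property for $T_\delta^*$, which is established in Appendix \ref{appendix}, via the natural translation that sends $P(\cdot)$ to the $\cL_\delta$-formula $\delta(\cdot)=0$. By Lemma \ref{lem:elext}, every model $(K,F)$ of $T_P$ admits an $\cL_P$-elementary extension of the form $(K^*, C_{K^*})$ for some $K^*\models T_\delta^*$. Elimination of $\exists^\infty$ for a fixed formula $\varphi(x,\bar y)$ asserts the existence of an integer $N_\varphi$ such that every fibre $\varphi(M,\bar b)$ is either of cardinality at most $N_\varphi$ or infinite; for each $\varphi$ this is a first-order statement preserved under $\cL_P$-elementary equivalence, so it suffices to verify the property in pairs of the form $(K,C_K)$ with $K\models T_\delta^*$.

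To each $\cL_P$-formula $\varphi(x,\bar y)$ with $x$ a single field sort variable, we associate the $\cL_\delta$-formula $\varphi^\delta(x,\bar y)$ obtained by replacing every atomic subformula of the form $P(t)$ by $\delta(t)=0$. Then for every $K\models T_\delta^*$ and every tuple $\bar b$ in $K^{\ell(\bar y)}$, the set defined by $\varphi(x,\bar b)$ in the pair $(K,C_K)$ coincides with the set $\varphi^\delta(K,\bar b)$ defined in $K$, since $P$ is interpreted by $C_K$ in the pair, which is precisely the zero set of $\delta$.

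Since $T_\delta^*$ eliminates the field sort quantifier $\exists^\infty$ (as shown in Appendix \ref{appendix}, using relative quantifier elimination from Theorem \ref{thm:QE} together with Proposition \ref{prop:conseAA1}), there exists an integer $N$ such that for every $K\models T_\delta^*$ and every $\bar b$, the set $\varphi^\delta(K,\bar b)$ is either of cardinality at most $N$ or infinite. By the identification of the previous paragraph, the same uniform bound applies to $\varphi((K,C_K),\bar b)$ in every pair $(K,C_K)$ arising from a model of $T_\delta^*$, and hence in every model of $T_P$ by the initial reduction. The only conceptual step is the reduction via Lemma \ref{lem:elext}; once this is at hand, the translation $P\mapsto(\delta=0)$ makes the transfer essentially mechanical, so no real obstacle is expected beyond invoking the result from the Appendix.
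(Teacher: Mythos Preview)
Your proof is correct and follows exactly the approach the paper intends: the paper's own proof is a one-liner invoking elimination of $\exists^\infty$ in $T_\delta^*$ (Theorem \ref{prop:QE_infty}), and you have simply spelled out the implicit mechanism---the reduction via Lemma \ref{lem:elext} and the translation $P(t)\mapsto \delta(t)=0$---that makes this invocation work. One minor remark: the appendix result is proved there via Lemma \ref{lem:envelop} rather than directly from Proposition \ref{prop:conseAA1}, but since you are only citing the conclusion this is immaterial.
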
 
\begin{proof}
This follows directly from the elimination of $\exists^\infty$ in $T_\delta^*$ (Theorem \ref{prop:QE_infty}), and the fact that every $\cL_P$-formula defines a set which is $\cL_\delta$-definable.  
\end{proof} 

\begin{proposition}\label{thm:opencorepairs} The theory $T_P$ has $\cL$-open core.
\end{proposition}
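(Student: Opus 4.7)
The plan is to leverage the tight connection between $T_P$ and $T_\delta^*$ provided by Lemma \ref{fact:model} and Lemma \ref{lem:elext}, combined with the already established $\cL$-open core for $T_\delta^*$ (Theorem \ref{thm:opencoregen}). Let $X \subseteq K^n$ be $\cL_P$-definable and open in a model $(K,F) \models T_P$, say $X = \varphi(K^n,a)$ for an $\cL_P$-formula $\varphi$ and parameters $a \in K^m$. Using Lemma \ref{lem:elext}, I pass to an $\cL_P$-elementary extension $(K^*, F^*) \succ_{\cL_P} (K,F)$ carrying a derivation $\delta$ with $(K^*,\delta) \models T_\delta^*$ and $C_{K^*} = F^*$. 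Openness of $\varphi(x,a)$ is expressible by an $\cL_P$-sentence using the neighbourhood basis formula $\chi_\tau$, so $X^* := \varphi((K^*)^n, a)$ remains open in $K^*$.

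Next, since $F^* = C_{K^*} = \{t \in K^* : \delta(t)=0\}$ is $\cL_\delta$-definable, I rewrite each occurrence of $P(t)$ in $\varphi$ as $\delta(t)=0$, obtaining an $\cL_\delta$-formula $\tilde\varphi(x,a)$ with $\tilde\varphi((K^*)^n,a) = X^*$. Thus $X^*$ is an open $\cL_\delta$-definable subset of a model of $T_\delta^*$, and Theorem \ref{thm:opencoregen} furnishes an $\cL$-formula $\psi(x,y')$ and parameters $b \in (K^*)^\ell$ with $X^* = \psi((K^*)^n, b)$.

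Finally, I descend back to $K$ by observing that the existence of an $\cL$-definition of $\varphi(x,a)$ over some parameters is itself expressible in $\cL_P$: the sentence
\[
\exists y' \, \forall x \bigl(\varphi(x,a) \leftrightarrow \psi(x,y')\bigr)
\]
is an $\cL_P$-sentence over $a$. It holds in $(K^*,F^*)$ (witnessed by $b$), so by $\cL_P$-elementarity it holds in $(K,F)$, providing some $b' \in K^\ell$ with $X = \psi(K^n, b')$. The one conceptual point to watch, which I regard as the heart of the argument rather than a genuine obstacle, is that $\cL$-definability of a specific $\cL_P$-definable set reduces to an $\cL_P$-existential condition on its parameters and hence transfers down along the elementary extension; everything else is essentially the dictionary between the predicate $P$ and the constants of $\delta$.
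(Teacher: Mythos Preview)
Your proof is correct and follows essentially the same route as the paper's own argument: pass to an $\cL_P$-elementary extension carrying a generic derivation via Lemma \ref{lem:elext}, translate $P$ to $\delta(\cdot)=0$, invoke Theorem \ref{thm:opencoregen}, and descend by expressing the existence of an $\cL$-definition as an $\cL_P$-sentence with parameters $a$. If anything, you are slightly more explicit than the paper in justifying why openness of $\varphi(x,a)$ transfers upward (via the first-order expressibility using $\chi_\tau$), which the paper simply asserts.
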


\begin{proof} By Lemma \ref{lem:elext}, let $\cK^*$ be a model of $T_\delta^*$ such that $(\cK_{|\cL}^*,C_{K^*})$ is an $\cL_P$-elementary extension of $(\cK,F)$. Let $\varphi(x,y)$ be an $\cL_P$-formula such that for $a\in \cK^{y}$, $\varphi(\cK,a)$ is open. Then, $\varphi(\cK^*,a)$ is open too. Since $\cK^*$ is a model of $T_\delta^*$ and every $\cL_P$-formula defines a set which is $\cL_\delta$-definable, $\varphi(\cK^*,a)$ is $\cL_\delta$-definable. Now by Theorem \ref{thm:opencoregen}, we have that $\varphi(\cK^*,a)$ is definable by $\psi(x,c)$ where $\psi(x,z)$ is an $\cL$-formula and $c\in (\cK^*)^z$. Then we have that 
\[
(\cK^*_{|\cL},C_{K^*})\models (\forall x)(\varphi(x,a)\leftrightarrow \psi(x,c)), 
\]
and quantifying over $c$, we have that 
\[
(\cK,F) \models (\exists c)(\forall x)(\varphi(x,a)\leftrightarrow \psi(x,c)), 
\]
which shows that $\varphi(\cK,a)$ is $\cL$-definable. 
\end{proof}

As a corollary of the above proposition and Corollary \ref{cor:opencore}, we recover the following result shown by P. Hieronymi and G. Boxall \cite[Corollary 3.4]{Boxall2012} and A. Fornasiero \cite[Theorem 13.11]{F}. 

\begin{corollary}\label{cor:opencorepairs}  Let $T$ be either $\RCF$, $\PCF$, $\RCVF$, $\ACVF_{0,p}$ or the $\cL_\Div$-theory of a henselian valued field of characteristic $0$. Then $T_{P}$ has $\cL$-open core. \qed
\end{corollary}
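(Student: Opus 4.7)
The plan is to unfold Proposition \ref{thm:opencorepairs}, which already supplies the $\cL$-open core for $T_P$ whenever $T$ is a one-sorted open $\cL$-theory of topological fields for which $T_\delta^*$ is consistent. So the corollary reduces to verifying these two hypotheses case by case for the listed theories.

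For $\RCF$, $\ACVF_{0,p}$, $\RCVF$, and $\PCF$, the verification is immediate: cases (1)--(4) of Examples \ref{examples} exhibit them as one-sorted open theories of topological fields (in $\cL_{\mathrm{of}}$, $\cL_\Div$, $\cL_{\mathrm{ovf}}$, and $\cL_{p,d}$ respectively), and Corollary \ref{cor:cons_examples} certifies that $T_\delta^*$ is consistent in each case. Applying Proposition \ref{thm:opencorepairs} to each of these theories directly yields $\cL$-open core for the corresponding $T_P$.

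The one case that needs slightly more care is "the theory of a henselian valued field of characteristic $0$", because Examples \ref{examples}(5) presents it in the multi-sorted language $\cL_\RV$, whereas Proposition \ref{thm:opencorepairs} is stated for one-sorted $\cL$. My plan is to reinterpret the theory in the one-sorted language $\cL_\Div$: the valuation topology is defined via $\chi_\tau(x,z)\equiv \Div(z,x)\wedge z\neq 0$, and one checks that assumption $(\mathbf{A})$ is inherited from the multi-sorted analysis (field-sort quantifier elimination, plus the decomposition of quantifier-free definable sets into finite unions of locally Zariski closed sets intersected with $\Div$-open conditions). Consistency of $T_\delta^*$ is then provided by Proposition \ref{prop:consistency}, whose hypotheses precisely match this setup. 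One final application of Proposition \ref{thm:opencorepairs} closes the case.

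The proof is thus an essentially mechanical specialization; the only real check is the henselian case in a one-sorted language, and even there the work has already been done in Examples \ref{examples}, Proposition \ref{prop:consistency}, and Corollary \ref{cor:cons_examples}. No new ideas are required, so I would expect the argument to occupy only a couple of sentences in the paper.
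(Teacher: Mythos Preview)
Your reduction to Proposition~\ref{thm:opencorepairs} is exactly what the paper does: the statement carries a bare \qed, and the preceding sentence just points to Examples~\ref{examples}. For $\RCF$, $\ACVF_{0,p}$, $\RCVF$, and $\PCF$ your verification is correct and matches the paper --- these are one-sorted open theories by Examples~\ref{examples}(1)--(4), $T_\delta^*$ is consistent by Corollary~\ref{cor:cons_examples}, and Proposition~\ref{thm:opencorepairs} applies directly.

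For the general henselian case you correctly spot the one-sorted restriction in Section~\ref{sec:DenseP2}, but your proposed fix has a gap. You claim that assumption~$(\mathbf{A})$ is ``inherited'' in the one-sorted language $\cL_\Div$, yet $(\mathbf{A})$(i) demands elimination of field-sort quantifiers, which in a one-sorted language means \emph{full} quantifier elimination. An arbitrary henselian valued field of characteristic~$0$ does not have QE in $\cL_\Div$: already $\Q_p$ needs the Macintyre predicates $P_n$, and in general one needs the $\RV$-sort (or residue field and value group sorts) precisely to absorb the quantifiers. The field-sort QE of Examples~\ref{examples}(5) is relative to the auxiliary $\RV$-sort and does not descend to $\cL_\Div$, because $\RV$-sort quantifiers translate back into field-sort quantifiers once you interpret $\RV$ as $K^\times/(1+\mathfrak m)$.

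So the henselian clause of the corollary does not follow from your argument as written. In fairness, the paper is equally terse here and does not spell out how the one-sorted hypothesis of Proposition~\ref{thm:opencorepairs} is met for Example~\ref{examples}(5); you have put your finger on a genuine point that the paper glosses over rather than resolves.
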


% \begin{proof}
% The result follows in all cases from Proposition \ref{thm:opencorepairs}. For $\RCF$, $\PCF$, $\RCVF$ and $\ACVF_{0,p}$, the assumption of Proposition \ref{thm:opencorepairs} is vacuous since $\cL=\cL^\bF$. For an $\cL_\Div$-theory $T_0$ of a henselian valued field $(K,v)$ of characteristic $0$, set $T$ to be the $\cL_\RV$-theory of some (any) model of $T_0$. We have that $\cL_\Div=\cL_\RV^\bF$ and $T_0=T^\bF$. The assumption of Proposition \ref{thm:opencorepairs} is satisfied since all auxiliary sorts in $\cL_\RV$ belong to $\cL_\Div^\eq$ (see (5) in Examples \ref{examples}. 
% \end{proof}

We finish this section with some remarks on distality. By a result of P. Hieronymi and T. Nell in \cite{HN2017}, the theory of dense pairs of an o-minimal expansion of an ordered group is not distal. A natural question posed by P. Simon asks whether the theory of such pairs always admits a distal expansion \cite[Question 1]{nell2018}. In \cite{nell2018}, T. Nell provided a positive answer to the question for the theory of dense pairs of ordered vector spaces. 

We show in Corollary \ref{cor:dist_expansion} that the theory of dense pairs of real closed fields admits a distal expansion, namely, $\CODF$. We derive analogous results for some theories of dense pairs of henselian valued fields using the following characterization of distality given in \cite{ACGZ}. 

\begin{fact}[{\cite[Main Theorem]{ACGZ}}]\label{fact:dist} Let $K$ be a henselian valued field of characteristic 0 in $\cL_{\Div}$. Then $K$ is distal if and only if it is finitely ramified and both its residue field and value group are distal. \qed
\end{fact}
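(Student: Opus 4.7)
The plan is to leverage Flenner's relative quantifier elimination for henselian valued fields of characteristic $0$ in the language $\cL_\RV$, which reduces the analysis of $\cL_\RV$-definable sets in $K$ to $\RV$-formulas applied to suitable field-sort terms. Since distality is a property that can be tested on indiscernible sequences, the strategy is to pull back the question to one about indiscernible sequences in $\RV(K)$ together with the residue field $k$ and the value group $\Gamma$.

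First I would handle the forward direction. The value group $\Gamma$ and the residue field $k$ are interpretable in $K$ and stably embedded, so distality of $K$ passes to them (distality is preserved under passing to stably embedded reducts, and indiscernible sequences in $k$ or $\Gamma$ lift to indiscernible sequences in $K$). For finite ramification, I would argue contrapositively: if $K$ has infinite ramification (i.e., residue characteristic $p>0$ and the interval $[0,v(p))$ is infinite in $\Gamma$), then one can build a mutually indiscernible pair of sequences in $K$ showing a non-distal pattern, using the fact that infinite ramification allows one to encode essentially $p$-adic integer arithmetic in an unbounded way.

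For the backward direction, assume $K$ is finitely ramified and both $k$ and $\Gamma$ are distal. Under finite ramification, the $\RV$-sort sits in a short exact sequence $1 \to k^\times \to \RV^\times \to \Gamma \to 0$ (modulo finite correction coming from the ramification index), and distality transfers through such short exact sequences of NIP structures, yielding that $\RV(K)$ is distal. Then I would transfer distality from $\RV$ to $K$ using Flenner's relative quantifier elimination: any $\cL_\RV$-indiscernible sequence in $K$ corresponds, modulo a field-sort term, to an indiscernible sequence in $\RV$, and distality of the latter implies distality of the former.

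The main obstacle is the precise transfer of distality across the $\RV$-sort and between $\RV$ and $K$. The short exact sequence step requires a careful statement of when distality passes through interpretations (it is more subtle than NIP, since distality is not preserved under arbitrary reducts or expansions), and the lift from $\RV$ to $K$ requires handling the interaction between residue field indiscernibles and valuative behaviour along the sequence; this is where the finite ramification hypothesis is essential, since it controls the kernel of the $\RV$-quotient uniformly and prevents the construction of pathological indiscernibles that mix residue and valuative data.
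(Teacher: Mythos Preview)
The paper does not prove this statement: it is recorded as a Fact with a citation to \cite{ACGZ} and used as a black box, so there is no proof in the paper to compare your sketch against.

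That said, your outline is broadly in the spirit of the argument in \cite{ACGZ}, but the step you yourself flag as the main obstacle---transferring distality across the short exact sequence $1\to k^\times\to\RV^\times\to\Gamma\to 0$---is precisely the technical core of that reference, not a known lemma one can invoke. Your sentence ``distality transfers through such short exact sequences of NIP structures'' is not a standard fact; establishing it (under finite ramification) is essentially the content of the cited theorem, and requires a careful analysis of how indiscernible sequences in $\RV$ decompose into residue-field and value-group data. Likewise, your argument for the necessity of finite ramification is too vague: the genuine obstruction is that under infinite ramification one can interpret in $K$ an infinite stable structure (for instance an infinite-dimensional $\mathbb{F}_p$-vector space arising from successive residue quotients), which carries totally indiscernible sequences and therefore cannot be distal. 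So while your high-level plan is reasonable, as written it assumes the two hardest steps rather than proving them.
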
 

\begin{remark}\label{rem:distal} In particular, the following theories are distal: $\PCF$, $\RCVF$ and the $\cL_{\Div}$-theory of $k(\!(t^{\Gamma})\!)$ where $k$ is a field of characteristic $0$ with a distal theory and where $\Gamma$ is an ordered abelian group with a distal theory (for $\PCF$ one uses additionally that adding a small set of constants preserves distality, see \cite{simon2013}). It is worth mentioning that for theories of regular abelian groups, the following properties are equivalent by \cite[Theorem 3.2]{ACGZ}: being distal, being dp-minimal or satisfying the algebraic condition that $\Gamma/p\Gamma$ is finite for every prime $p$. Alternatively, for $\RCVF$ and $\PCF$, distality also follows from dp-minimality.
\end{remark}

\begin{corollary}\label{cor:dist_expansion} If the theory $T$ is distal, then $T_\delta^*$ is a distal expansion of $T_P$. In particular, $T_P$ admits a distal expansion when $T$ is either $\RCF$, or $\PCF$, or $\RCVF$ or the $\cL_{\Div}$-theory of a finitely ramified henselian valued field with distal residue field and value group (e.g., $\R(\!(t)\!)$, or $\Q_p(\!(t)\!)$).
\end{corollary}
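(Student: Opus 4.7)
The proof splits naturally into two parts: first, recognizing $T_\delta^*$ (via the interpretation $P(x) \equiv \delta(x)=0$) as an expansion of $T_P$; second, establishing that $T_\delta^*$ is distal whenever $T$ is.

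For the first part, the plan is to combine Lemma \ref{fact:model} and Lemma \ref{lem:elext}. The former shows that any model $K$ of $T_\delta^*$ becomes a model of $T_P$ once $P$ is interpreted as the field of constants $C_K$. The latter gives the converse direction: every model of $T_P$ has an $\cL_P$-elementary extension expanding to a model of $T_\delta^*$ in which $P$ is the kernel of $\delta$. Since $T_P$ is complete (as observed in Section \ref{sec:DenseP1}), these two lemmas together identify the $\cL_P$-reduct of $T_\delta^*$ with $T_P$ after adding the defining axiom $\forall x\,(P(x)\leftrightarrow \delta(x)=0)$. In particular, every $\cL_P$-formula is equivalent, modulo this axiom, to an $\cL_\delta$-formula, so $T_\delta^*$ is literally an expansion of $T_P$ in the extended language.

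For the second part, the plan is to invoke the transfer of distality from $T$ to $T_\delta^*$, which is one of the properties that is listed and proved in Appendix \ref{appendix}. The key ingredient there is the relative field-sort quantifier elimination given by Theorem \ref{thm:QE}: every $\cL_\delta$-formula $\varphi(x,w)$ on field-sort variables is equivalent to an $\cL$-formula $\varphi^*(\bar\delta^m(x),w)$ evaluated on jet tuples. This reduces any putative counterexample to distality in $T_\delta^*$ (a non-distal mutually indiscernible configuration) to a counterexample in $T$, by passing from $x$ to $\bar\delta^m(x)$. This is the only genuinely non-trivial step; the rest of the argument is bookkeeping around the interpretation $P = \ker\delta$, and I would expect this transfer step to be the main obstacle, although it proceeds along well-established lines.

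With the general statement in hand, the instances follow by checking that $T$ is distal in each case. For $\RCF$ this is classical since o-minimal theories are distal. For $\PCF$, $\RCVF$, and the $\cL_\RV$-theory of $k(\!(t^\Gamma)\!)$ under the stated hypotheses on $k$ and $\Gamma$, distality follows from Fact \ref{fact:dist} together with Remark \ref{rem:distal} (the henselian valued fields in question are finitely ramified with distal residue field and distal value group, so they are distal themselves).
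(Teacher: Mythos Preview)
Your proposal is correct and follows essentially the same route as the paper: the paper invokes Lemma \ref{lem:elext} together with the distality transfer Theorem \ref{thm:distality}, and then handles the particular instances via o-minimality for $\RCF$ and Remark \ref{rem:distal} for the valued-field cases. Your version is slightly more detailed (you also cite Lemma \ref{fact:model} and sketch why the transfer in Theorem \ref{thm:distality} works via $\varphi^*$), but the structure and ingredients are the same.
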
 

\begin{proof} Since distality transfers from $T$ to $T_\delta^*$ (Theorem \ref{thm:distality}), Lemma \ref{lem:elext} implies that $T_\delta^*$ is a distal expansion  of $T_P$ whenever $T$ is distal. Thus,  for $\RCF$ the result follows from the well-known fact that any o-minimal theory is distal. The remaining cases follow from Remark \ref{rem:distal}. 
\end{proof}

It is worthy to note that A. Fornasiero and E. Kaplan \cite{fornasiero-kaplan} generalized the previous result for some o-minimal expansions of $\RCF$. 
  
\medskip

As a consequence of results of A. Chernikov and S. Starchenko in \cite{chernikov-star2018}, definable relations in models of $T_P$ satisfied the so called strong Erd\H{o}s-Hajnal property  (see \cite[Definition 1.6, Theorem 6.10 (3)]{chernikov-star2018}).

\begin{corollary}\label{cor:distality_erdos} If $T$ is distal, then definable relations in models of $T_P$ satisfy the strong Erd\H{o}s-Hajnal property. This holds in particular for models of $T_P$ when $T$ is $\RCF$, $\PCF$ or $\RCVF$, or when $T$ is the $\cL_{\Div}$-theory of a finitely ramified henselian valued field with distal residue field and value group.
\end{corollary}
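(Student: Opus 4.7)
The plan is to reduce the statement to a direct application of the Chernikov--Starchenko theorem cited in the statement. Recall that by \cite[Theorem 6.10 (3)]{chernikov-star2018}, if a structure $M$ admits a distal expansion $M'$ (in a possibly larger language), then every relation definable in $M$ satisfies the strong Erd\H{o}s--Hajnal property, since the property depends only on the family of definable relations of the larger structure and every $M$-definable relation is a fortiori definable in $M'$.

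First I would invoke Corollary \ref{cor:dist_expansion}: under the hypothesis that $T$ is distal, the theory $T_\delta^*$ is a distal expansion of $T_P$ (every model of $T_P$ arises, up to elementary equivalence, as the reduct to $\cL_P$ of a model $(K, C_K)$ of $T_\delta^*$ via Lemma \ref{fact:model} and Lemma \ref{lem:elext}, the predicate $P$ being interpreted by $\{t : \delta(t)=0\}$, an $\cL_\delta$-definable subset). Thus any $\cL_P$-definable relation is $\cL_\delta$-definable in a distal structure.

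Applying the Chernikov--Starchenko theorem to $\cL_P$-definable relations, viewed as $\cL_\delta$-definable relations in a model of the distal theory $T_\delta^*$, yields the strong Erd\H{o}s--Hajnal property for $T_P$. For the listed concrete cases ($\RCF$, $\PCF$, $\RCVF$, and the $\cL_\RV$-theory of $k(\!(t^\Gamma)\!)$ with $k$ and $\Gamma$ distal), distality of $T$ was established in Remark \ref{rem:distal}, so Corollary \ref{cor:dist_expansion} applies and the first part of the statement specialises to give the claim. The main (and only) subtlety is the need to phrase the strong Erd\H{o}s--Hajnal property as a property of a family of relations which is preserved under reducts; this is built into the definition in \cite[Definition 1.6]{chernikov-star2018}, so no further work is required.
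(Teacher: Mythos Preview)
Your proposal is correct and matches the paper's own proof: both invoke Corollary~\ref{cor:dist_expansion} to obtain a distal expansion of $T_P$ and then apply the Chernikov--Starchenko result that structures with a distal expansion satisfy the strong Erd\H{o}s--Hajnal property. The only cosmetic difference is that the paper cites \cite[Corollary~4.8]{chernikov-star2018} rather than Theorem~6.10~(3); your added remarks (via Lemma~\ref{lem:elext} and Remark~\ref{rem:distal}) simply unpack what is already packaged inside Corollary~\ref{cor:dist_expansion}.
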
 
\begin{proof}
This follows from Corollary \ref{cor:dist_expansion} and \cite[Corollary 4.8]{chernikov-star2018}.
\end{proof}

\appendix

\section{Classical transfers}\label{appendix}

Through this section we let $T$ be an open $\cL$-theory of topological fields. Let $\bU$ be a monster model of $T_\delta^*$ and $A$ be some small subset. 
% We let $\langle A \rangle$ be $A$ together with the differential closure of the $\cL$-substructure generated by $A$ in the field sort.  

\begin{lemma}\label{lem:types} 
Let $x$ be a tuple of $\bF$-variables and let $z$ be a tuple of auxiliary sort variables. Then for $a\in \bU^x$ and $e\in \bU^z$, the $\cL_\delta$-type $tp_\delta(a,e/A)$ is determined by the infinite sequence of $\cL$-types $\{tp(\bd^m(a), e / \langle A\rangle_{\cL_\delta}): m\in \N\}$. 
\end{lemma}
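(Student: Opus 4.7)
The plan is to combine the relative elimination of field-sort quantifiers (Theorem \ref{thm:QE}) with the syntactic translation introduced in Notation \ref{not:etoile}. The underlying idea is that after removing field-sort quantifiers, any $\cL_\delta$-formula becomes an $\cL$-formula in which the field-sort variable $x$ has been replaced by the tuple $\bd^m(x)$ of its first $m+1$ derivatives; once this is done, the truth of the formula at $(a,e)$ becomes a statement about the $\cL$-type of $(\bd^m(a),e)$ over $\langle A\rangle$.

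First I would fix an arbitrary $\cL_\delta$-formula $\varphi(x,z)$ with parameters in $A$, and reduce the question to showing that the truth value of $\varphi(a,e)$ is already determined by $\tp(\bd^m(a), e / \langle A\rangle)$ for some $m=m(\varphi)\in\N$. Next, by Theorem \ref{thm:QE}, $\varphi(x,z)$ is equivalent modulo $T_\delta^*$ to a field-sort quantifier-free $\cL_\delta$-formula $\tilde\varphi(x,z)$ with parameters in $A$. Finally, Notation \ref{not:etoile} produces an integer $m$ and an $\cL$-formula $\tilde\varphi^*(\bar x,z)$ such that
\[
T_\delta \models \forall x \forall z\, \bigl(\tilde\varphi(x,z) \leftrightarrow \tilde\varphi^*(\bd^m(x),z)\bigr).
\]
The field-sort parameters of $\tilde\varphi^*$ are among the $\bd^m(c)$ for $c\in A$, hence lie in $\langle A\rangle$. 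Therefore $\bU \models \varphi(a,e)$ iff $\bU \models \tilde\varphi^*(\bd^m(a),e)$, which depends only on $\tp(\bd^m(a), e / \langle A\rangle)$. Running this argument for every $\varphi$ yields the conclusion.

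The main obstacle, as far as I can see, is essentially bookkeeping rather than substance: one must check that Notation \ref{not:etoile} continues to apply when $\tilde\varphi$ contains variables (and possibly quantifiers) from auxiliary sorts, which is fine because the translation only rewrites field-sort terms using the fact that the restriction of $\cL$ to the field sort is a relational extension of $\cL_{\mathrm{field}}\cup\Omega$ and that $\delta$ is trivial on $\Q(\Omega)$. The other point to be careful about is that passing from $A$ to $\langle A\rangle$ is genuinely necessary, since the parameters of $\tilde\varphi^*$ involve iterated derivatives of field-sort elements of $A$; this is precisely why the statement quantifies over $\cL$-types over $\langle A\rangle$ rather than over $A$.
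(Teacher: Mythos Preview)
Your proposal is correct and follows exactly the same route as the paper: apply Theorem \ref{thm:QE} to eliminate field-sort quantifiers, then use Notation \ref{not:etoile} to rewrite the resulting formula as an $\cL$-formula $\varphi^*$ in the variables $\bd^m(x)$ with parameters in $\langle A\rangle$. The paper's proof is a one-line version of precisely this argument, and your additional remarks about auxiliary-sort quantifiers and the necessity of passing to $\langle A\rangle$ are accurate elaborations.
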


\begin{proof} This follows from Corollary \ref{cor:eq}.
% Note that $\langle A\rangle_{\cL_\delta}= \langle A\rangle_{(\cL_\Mor)_\delta}$, and therefore, without loss of generality, possibly working in $(T_\Mor)_\delta^*$, we may assume $T_\delta^*$ eliminated $\bF$-quantifiers by Theorem \ref{thm:QE}. The result follows by Lemma-Definition \ref{lem:etiole}. 
\end{proof}

\begin{corollary}\label{cor:indiscernibles} Let $x$ and $z$ be as in the previous lemma. Let $(a_i, e_i)_{i\in I}$ be a sequence where $a_i\in \bU^x$ and $e_i\in \bU^z$. Then the sequence $(a_i, e_i)_{i\in I}$ is $\cL_\delta$-indiscernible over $A$ if and only if for each $m\in \N$, the sequence $(\bd^m(a_i), e_i)_{i\in I}$ is $\cL$-indiscernible over $\langle A\rangle_{\cL_\delta}$. \qed
\end{corollary}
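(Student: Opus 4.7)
The plan is to derive the corollary directly from Lemma \ref{lem:types} by applying it to finite subtuples. Indiscernibility is a property of the types of finite increasing subsequences, so it suffices to translate the coincidence of $\cL_\delta$-types of finite subtuples into the coincidence of all $\cL$-types of their $m$-th prolongations, and vice versa.

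More concretely, for the forward direction, I would fix finite increasing indices $i_1<\cdots<i_n$ and $j_1<\cdots<j_n$ in $I$ and consider the finite tuples $\bar{u}=(a_{i_1},e_{i_1},\ldots,a_{i_n},e_{i_n})$ and $\bar{v}=(a_{j_1},e_{j_1},\ldots,a_{j_n},e_{j_n})$. By $\cL_\delta$-indiscernibility, $\tp_\delta(\bar{u}/A)=\tp_\delta(\bar{v}/A)$. Applying Lemma \ref{lem:types} to each such tuple (with the field-sort components playing the role of $a$ and the auxiliary-sort components playing the role of $e$) yields $\tp(\bd^m(\bar{u}_{\text{field}}),\bar{u}_{\text{aux}}/\langle A\rangle)=\tp(\bd^m(\bar{v}_{\text{field}}),\bar{v}_{\text{aux}}/\langle A\rangle)$ for every $m\in\N$. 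Since the choice of subsequences was arbitrary, this shows that $(\bd^m(a_i),e_i)_{i\in I}$ is $\cL$-indiscernible over $\langle A\rangle$ for each $m$.

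For the converse, assume $(\bd^m(a_i),e_i)_{i\in I}$ is $\cL$-indiscernible over $\langle A\rangle$ for every $m\in \N$. Then for any two finite increasing subsequences, the $\cL$-types of their $m$-th prolongations over $\langle A\rangle$ agree for every $m$. By Lemma \ref{lem:types}, this equality of the whole sequence of $\cL$-types determines (and hence forces the equality of) the corresponding $\cL_\delta$-types over $A$, so the original sequence $(a_i,e_i)_{i\in I}$ is $\cL_\delta$-indiscernible over $A$.

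There is essentially no obstacle here: once Lemma \ref{lem:types} is established, the corollary is a formal unpacking of the definition of indiscernibility. The only mild bookkeeping point is to ensure that Lemma \ref{lem:types}, stated for a single pair $(a,e)$, applies to the concatenated finite tuples $\bar{u},\bar{v}$; this is immediate since a finite concatenation of field-sort (resp.\ auxiliary-sort) variables is again a tuple of field-sort (resp.\ auxiliary-sort) variables, and $\bd^m$ acts coordinatewise on the field-sort part.
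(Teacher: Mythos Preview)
Your proposal is correct and matches the paper's approach: the corollary is marked with a \qed immediately after the statement, indicating it is treated as an immediate consequence of Lemma~\ref{lem:types} via exactly the unpacking of indiscernibility you describe. The only minor remark is that the forward direction does not actually require Lemma~\ref{lem:types} (which states that the $\cL$-types determine the $\cL_\delta$-type), since any $\cL$-formula over $\langle A\rangle$ applied to $(\bd^m(a_i),e_i)$ is already an $\cL_\delta$-formula over $A$; but this is a harmless imprecision.
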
 

\begin{theorem}\label{thm:ANIP}
$T$ is $\NIP$ if and only if $T_\delta^*$ is $\NIP$. 
\end{theorem}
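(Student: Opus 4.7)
The plan is to prove the two implications separately, both of which reduce to standard manipulations once the relative quantifier elimination and the transfer of indiscernibility established in Theorem \ref{thm:QE} and Corollary \ref{cor:indiscernibles} are available. The forward direction ($T_\delta^*$ NIP $\Rightarrow$ $T$ NIP) is immediate: since $T_\delta^*$ is complete (Corollary \ref{cor:complete}) and $T$ is its $\cL$-reduct, and since every $\cL$-formula is in particular an $\cL_\delta$-formula, any IP pattern for $T$ would witness IP for $T_\delta^*$ as well. So $T$ inherits NIP from $T_\delta^*$ by the usual reduct argument.

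For the converse, suppose $T$ is NIP. I would use the characterization that a formula $\varphi(x,y)$ is NIP iff for every indiscernible sequence $(a_i)_{i<\omega}$ and every parameter $b$, the truth values $\varphi(a_i,b)$ are eventually constant. Let $\varphi(x,y)$ be an arbitrary $\cL_\delta$-formula. By Theorem \ref{thm:QE}, I may assume $\varphi$ has no field sort quantifiers, so (with the convention that $\bd^m$ acts trivially on auxiliary sort components, and letting $m$ be the order of $\varphi$ in the sense of Notation \ref{not:etoile}) there exists an $\cL$-formula $\varphi^*$ with
\[
T_\delta^* \models \forall x\, \forall y\,\bigl(\varphi(x,y) \leftrightarrow \varphi^*(\bd^m(x),\bd^m(y))\bigr).
\]
Suppose toward a contradiction that $\varphi$ has IP in $T_\delta^*$, and pick an $\cL_\delta$-indiscernible sequence $(a_i)_{i<\omega}$ in $\bU$ together with a parameter $b$ for which the values $\varphi(a_i,b)$ are not eventually constant. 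By Corollary \ref{cor:indiscernibles}, the sequence $(\bd^m(a_i))_{i<\omega}$ is $\cL$-indiscernible, and the displayed equivalence transports the IP pattern to the $\cL$-formula $\varphi^*$ with parameter $\bd^m(b)$. This contradicts NIP of $T$, and completes the argument.

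The main work has already been done upstream: the relative quantifier elimination is what allows one to replace an $\cL_\delta$-formula by an $\cL$-formula at the cost of passing to prolongations, and Corollary \ref{cor:indiscernibles} shows that $\cL_\delta$-indiscernibility of $(a_i)$ automatically yields $\cL$-indiscernibility of each prolongation $(\bd^m(a_i))$. Given these two ingredients, the transfer of NIP is essentially formal. The only mild subtlety is the handling of auxiliary-sort components in $x$ and $y$, which is dealt with by the convention that $\delta$ is the identity outside the field sort. I do not anticipate any real obstacle.
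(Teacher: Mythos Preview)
Your proposal is correct and follows essentially the same strategy as the paper: reduce via Theorem \ref{thm:QE} to a field-sort quantifier-free formula, then observe that IP for $\varphi$ yields IP for the $\cL$-formula $\varphi^*$, while the reverse implication is the standard reduct argument. The paper's version is slightly more direct in that it uses the combinatorial definition of IP (a shattered set for $\varphi$ gives one for $\varphi^*$ by applying $\bd^m$ to the witnesses), so Corollary \ref{cor:indiscernibles} is not actually needed here; your route through the indiscernible characterization is a harmless detour.
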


\begin{proof} Suppose $T_\delta^*$ is not $\NIP$. Let $\varphi(x,z)$ be an $\cL_\delta$-formula with $x$ a tuple of $\bF$-variables and $z$ a tuple of auxiliary sort variables and suppose $\varphi$ has $\mathrm{IP}$. By Corollary \ref{cor:eq}, $\varphi$ is equivalent to $\tilde{\varphi}(\bd^m(x),z)$, where $\tilde{\varphi}(\bar{x},z)$ is an $\cL$-formula. Then, since $\varphi$ has $\mathrm{IP}$ so does the formula $\tilde{\varphi}$. The converse is clear, since being $\NIP$ is preserved by reducts (see Remark \ref{rem:consistency}). 
\end{proof}

To show the transfer of distality, we use the following definition which appears in \cite{HN2017}, which is slightly different from the original definition given by P. Simon in \cite{simon2013}. In the following definition we let $\cL$ be any first order language, $T$ be a complete $\cL$-theory and $\bU$ be a monster model of $T$. 

\begin{definition}[{\cite[Definition 1.3]{HN2017}}] Let $\varphi(x_1,\ldots,x_n;y)$ be a partitioned $\cL$-formula, where $x_i$, $1\leqslant i\leqslant n$ is a $p$-tuple of variables and $y$ is a $q$-tuple of variables, $p, q>0$. Then $\varphi$ is distal (in $T$) if for every $b\in \bU^q$, and every indiscernible sequence $(a_i)_{i\in I}$ in $\bU^p$ such that 
\begin{enumerate}
\item $I=I_1+c+I_2$, where both $I_1, I_2$ are (countable) infinite dense linear orders without end points and $c$ is a single element with $I_1<c<I_2$,
\item the sequence $(a_i)_{i\in I_1+I_2}$ in $\bU^p$ is indiscernible over $b$,
\end{enumerate}
then $\bU\models \varphi(a_{i_1},\ldots,a_{i_n};b)\leftrightarrow \varphi(a_{j_1},\ldots,a_{j_n};b)$ with $i_1<\ldots<i_n$, $j_1<\ldots<j_n$ in $I$. 

A theory $T$ is distal if every formula is distal in $T$. 
\end{definition}

The transfer of distality from $T$ to $T_\delta^*$ was already noted for some theories $T$ by A. Chernikov, which by now have been included in \cite{ACGZ} (where they consider more generally the transfer of distality for fields endowed with several operators). The converse has not been, to our knowledge, observed before. Note that since distality is not preserved under reducts, the converse implication is not straightforward as in Theorem \ref{thm:ANIP}. 

\begin{theorem}\label{thm:distality} 
$T$ is distal if and only if $T_\delta^*$ is distal.
\end{theorem}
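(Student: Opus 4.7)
The plan is to prove the two implications separately. For the forward direction (distal $T$ implies distal $T_\delta^*$), the argument will be a direct translation via relative quantifier elimination. Given an $\cL_\delta$-formula $\varphi(\bar{x}_1,\ldots,\bar{x}_n;\bar{y})$, I will first use Theorem \ref{thm:QE} to reduce to the case where $\varphi$ is field-sort quantifier-free, splitting each variable into its field and auxiliary components $\bar{x}_i=(\bar{x}_i^f,\bar{x}_i^a)$ and $\bar{y}=(\bar{y}^f,\bar{y}^a)$. Letting $m$ be the order of $\varphi$ in the sense of Notation \ref{not:etoile}, $\varphi$ becomes equivalent to an $\cL$-formula $\varphi^*$ applied to the prolongations $(\bd^m(\bar{x}_i^f),\bar{x}_i^a)$ and $(\bd^m(\bar{y}^f),\bar{y}^a)$. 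For any $\cL_\delta$-indiscernible sequence $(a_i)_{i\in I}$ and parameter $b$ as in the definition of distal, Corollary \ref{cor:indiscernibles} will guarantee that the prolonged sequence $(\bd^m(a_i^f),a_i^a)_{i\in I}$ is $\cL$-indiscernible and that its restriction to $I_1+I_2$ is $\cL$-indiscernible over $(\bd^m(b^f),b^a)$. Applying distality of $T$ to $\varphi^*$ then yields the required distal conclusion for $\varphi$.

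For the converse, the main obstacle is that distality is not automatically preserved under reducts, so a non-trivial argument is required. The plan is to argue contrapositively: suppose an $\cL$-formula $\varphi$, a model $M\models T$, an $\cL$-indiscernible sequence $(a_i)_{i\in I}$ with $I=I_1+c+I_2$, a parameter $b$ with $(a_i)_{i\in I_1+I_2}$ $\cL$-indiscernible over $b$, and increasing tuples $\bar{i},\bar{j}$ in $I$ witness non-distality of $\varphi$ in $T$. By Remark \ref{rem:consistency}, I will $\cL$-elementarily embed $M$ into a model $M'\models T_\delta^*$, so that the $\cL$-data persist in $M'$. The key step will then be a Ramsey-plus-compactness extraction of indiscernibles carried out inside $T_\delta^*$: I will produce, in an $\cL_\delta$-elementary extension of $M'$, a sequence $(a'_i)_{i\in I}$ and a parameter $b'$ such that $(a'_i)_{i\in I}$ is $\cL_\delta$-indiscernible, $(a'_i)_{i\in I_1+I_2}$ is $\cL_\delta$-indiscernible over $b'$, and the $\cL$-EM-type of $(a'_i)_{i\in I}$ together with the $\cL$-type over $b'$ of each tuple indexed by $I_1+I_2$ coincide with the corresponding data of $(a_i,b)$. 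Since the original sequence is $\cL$-indiscernible on $I$ and $\cL$-indiscernible over $b$ on $I_1+I_2$, the $\cL$-type of any tuple $(a_{\bar{k}})$ over $b$ depends only on the relative position of the indices with respect to $c$, so it is automatically preserved in any extraction respecting the order-type $I_1+c+I_2$. The witnesses $\varphi(a_{\bar{i}};b)$ and $\neg\varphi(a_{\bar{j}};b)$ then transfer to the extracted sequence, contradicting the distality of $T_\delta^*$.

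The hardest step will be executing this Ramsey extraction with multiple simultaneous preservation requirements: $\cL_\delta$-indiscernibility of the full sequence, $\cL_\delta$-indiscernibility over $b'$ of the sub-sequence on $I_1+I_2$, and matching of the $\cL$-EM-type (which automatically carries the specific witnesses along). I plan to handle this via a two-layer colouring of finite increasing tuples in $I$—once by their $\cL_\delta$-type inside a fixed finite fragment, and, for tuples inside $I_1+I_2$, once by their $\cL_\delta$-type over $b$—combined with standard diagonalisation and compactness to assemble the full indiscernible sequence in the limit.
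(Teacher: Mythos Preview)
Your forward direction coincides with the paper's argument. For the converse, however, the paper takes a very different and much shorter route: rather than attempting any Ramsey extraction, it exploits the constant field. Given a non-distal $\cL$-formula $\varphi$, the paper considers the $\cL_\delta$-formula $\psi\coloneqq\varphi\wedge\bigwedge_i\delta(x_i)=0\wedge\delta(y)=0$, and observes that since $C_\bU\models T$ (Lemma \ref{fact:model}), one may locate the entire non-distality witness (indiscernible sequence, parameter, and the two offending index tuples) with all field-sort coordinates inside $C_\bU$. For tuples of constants one has $\bd^m(a_i)=(a_i,0,\ldots,0)$, so Corollary \ref{cor:indiscernibles} upgrades $\cL$-indiscernibility to $\cL_\delta$-indiscernibility for free, and the same witnesses refute distality of $\psi$.

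Your Ramsey route is workable in principle and has the virtue of not invoking anything specific about $T_\delta^*$ beyond relative QE, but the proposal as written has a soft spot. The sentence ``the $\cL$-type of any tuple $(a_{\bar{k}})$ over $b$ depends only on the relative position of the indices with respect to $c$, so it is automatically preserved in any extraction'' is not correct: for $\bar{k}$ containing $c$, the type $\tp_\cL(a_{\bar{k}}/b)$ is \emph{not} forced by the two indiscernibility hypotheses, so nothing is automatic. What actually makes the extraction go through is a two-stage argument: first lengthen the sequence, extract an $\cL_\delta$-$\Delta$-indiscernible subsequence $J'$ by Erd\H{o}s--Rado, then use $\cL$-indiscernibility of the original long sequence plus saturation to produce, for a freshly chosen interior index $c'\in J'$, a \emph{new} parameter $b_{c'}$ replaying the $\cL$-role of $b$, and only then perform a second extraction inside $J'\setminus\{c'\}$ over $\{a_{c'},b_{c'}\}$. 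The point is that $J''\cup\{c'\}\subseteq J'$ inherits $\cL_\delta$-$\Delta$-indiscernibility from the first pass. Your ``two-layer colouring'' sketch gestures at this but skips the crucial step of re-choosing $b$ after the first extraction; without that, inserting $a_c$ back into the extracted sequence need not give $\cL_\delta$-indiscernibility. The paper's constant-field trick sidesteps all of this bookkeeping.
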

\begin{proof}
Let us check that in $T_\delta^*$ every formula is distal. Let $\varphi(x_1,z_1,\ldots,x_n,z_n;y,w)$ be a partitioned $\cL_{\delta}$-formula where each $x_i$ is a $p$-tuple of $\bF$-variables, each $z_i$ is a $q$-tuple of variables of fixed auxiliary sorts $\bS_1,\ldots, \bS_q$, $y$ is a tuple of $\bF$-variables and $w$ is a tuple of auxiliary sorts. Let $\bU$ be a monster model of $T_\delta^*$. By Corollary \ref{cor:eq}, there is an $\cL$-formula $\tilde{\varphi}(\bar{x}_1,z_1, \ldots, \bar{x}_n, z_n; \bar{y},w)$ such that $\varphi(x_1,z_1,\ldots,x_n,z_n;y,w)$ is equivalent to $\tilde{\varphi}(\bd^m(x_1),z_1,\ldots,\bd^m(x_n),z_n;\bd^m(y),w)$ for some $m\geqslant 0$.
Take an $\cL_{\delta}$-indiscernible sequence $(a_i, e_i)_{i\in I}$ in $\bU$ where $(a_i,e_i)\in \bU^{(x_1,z_1)}$ and $I=I_1+c+I_2$ with $I_1 , I_2$ infinite dense linear orders without end points. Let $(b,d)$ be a tuple in $\bU^{(y,w)}$, and assume that $(a_i,e_i)_{i\in I_1+I_2}$ is $\cL_{\delta}$-indiscernible over $(b,d)$. Then, by Corollary \ref{cor:indiscernibles}, the sequence $(\bar{\delta}^{m}(a_i), e_i)_{i\in I}$ (resp. $(\bar{\delta}^{m}(a_i),e_i)_{i\in I_1+I_2}$) is $\cL$-indiscernible (resp. $\cL$-indiscernible over $B$ where $B=\{\delta^m(b):m\in \N\}\cup \{d\})$) for every $m\in \N$. Since $T$ is distal, the $\cL$-formula $\tilde{\varphi}$
is distal, which implies the distality of $\varphi$. 

For the converse, suppose $\varphi(x_1,z_1,\ldots, x_n,z_n; y, w)$ is an $\cL$-formula which is not distal in $T$. Consider the $\cL_\delta$-formula $\psi$
\[
\varphi(x_1,z_1,\ldots, x_n,z_n; y, w) \wedge \bigwedge_{j=1}^n \delta(x_i)=0 \wedge \delta(y)=0. 
\]
Let $A\subseteq \bU$ be such that all elements of the field sort are in the constant field $C_\bU$ of $\bU$. Let $(a_i,e_i)_{i\in I}$ be an $\cL$-indiscernible sequence over $A$, where $a_i\in \bU^{x_1}$ and $e_i\in \bU^{z_1}$. If $a_i\in C_\bU$ for each $i\in I$, then by Corollary \ref{cor:indiscernibles}, we have that $(a_i,e_i)_{i\in I}$ is also $\cL_\delta$-indiscernible over $A$. Then if $(a_i, e_i)_{i\in I}$ and $(b,d)\in \bU^{(y,w)}$ are a counterexample for the distality of $\varphi$ in $T$, the same witnesses show that $\psi$ is not distal in $T_\delta^*$. 
\end{proof} 

We finish by showing that $T_\delta^*$ eliminates the field quantifier $\exists^\infty$. It was proven for $\CODF$ by the second author in \cite{point2011} and played a crucial role in the (first) proof that $\CODF$ has open core. The result is to our knowledge new in the general setting.

\begin{theorem}\label{prop:QE_infty} The theory $T_\delta^*$ eliminates the field sort quantifier $\exists^\infty$.
\end{theorem}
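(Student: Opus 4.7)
My plan is to combine the parametric density Lemma \ref{lem:envelop} with the fact that $T$ already eliminates the field sort quantifier $\exists^\infty$ (Proposition \ref{prop:conseAA1}).

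I would start with an arbitrary $\cL_\delta$-formula $\varphi(y;z)$ where $y$ is a single field sort variable and $z=(z^f, z^a)$ consists of parameters of field and auxiliary sorts respectively. By Theorem \ref{thm:QE}, I may assume $\varphi$ has no field sort quantifiers; let $m$ denote its order. I would then apply Lemma \ref{lem:envelop} to the $\cL_\delta$-definable set $X$ defined by $\varphi$ to obtain $d=2m$ and an $\cL$-definable set $Y$ satisfying $X=\J_d^{-1}(Y)$ and $|X_z|=|Y_{(\J_d(z^f),z^a)}|$ whenever $X_z$ is finite. The extension of Lemma \ref{lem:envelop} to auxiliary parameters is immediate since those parameters appear only as fixed constants in the $\cL$-formulas produced by Lemma \ref{lem:deltaniceform}.

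The crucial observation is that $X_z$ is infinite if and only if $Y_{(\J_d(z^f),z^a)}$ is infinite. One direction is the contrapositive of the cardinality clause above; the other follows from the injectivity of $\J_d$ on $X_z$ together with the inclusion $\J_d(X_z)\subseteq Y_{(\J_d(z^f),z^a)}$. Now $T$ eliminates the single-variable field sort $\exists^\infty$ by Proposition \ref{prop:conseAA1}, and a routine induction on arity extends this to tuples of field variables. Consequently, there is an $\cL$-formula $\theta(u,z^a)$ equivalent to ``$Y_{(u,z^a)}$ is infinite''. Then the $\cL_\delta$-formula $\theta(\J_d(z^f),z^a)$ is equivalent modulo $T_\delta^*$ to $\exists^\infty y\,\varphi(y,z)$, as required.

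The main obstacle is packaged into Lemma \ref{lem:envelop}: once one has the fiber-size correspondence between $X$ and $Y$, which in turn comes from the axiom scheme (DL) combined with the continuity of the rational prolongations, the elimination of $\exists^\infty$ for $T_\delta^*$ is essentially formal.
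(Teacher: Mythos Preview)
Your proposal is correct and follows essentially the same approach as the paper: both apply Lemma \ref{lem:envelop} to pass from the $\cL_\delta$-definable $X$ to an $\cL$-definable $Y$ with matching fiber sizes, and then invoke elimination of $\exists^\infty$ in $T$. The only cosmetic differences are that the paper phrases the conclusion as a uniform bound on the size of finite fibers (rather than definability of the infinite-fiber locus) and does not explicitly treat auxiliary-sort parameters, which you handle by the obvious observation that they pass through Lemma \ref{lem:deltaniceform} untouched.
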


\begin{proof}
For simplicity, suppose $\cL$ is one-sorted. Let $\cK$ be a model of $T_{\delta}^*$. Let $X\subseteq K^{n+1}$ be an $\cL_{\delta}$-definable set of order $\bar{m}=(m_1,\ldots, m_{n+1})$. For $m=\max_i\{m_i\}$, let $Y\subseteq K^{\bar{d}+1}$ be the $\cL$-definable set given in Lemma \ref{lem:envelop}. Since $T$ eliminates $\exists^{\infty}$, there is a finite bound $n_{Y}$ such that for any tuple $\bar e\in K^{n(d+1)}$, either $Y_{\bar e}$ is infinite or has cardinality $\leqslant n_{Y}$. By Lemma \ref{lem:envelop}, if $X_a$ is finite for $a\in K^n$, then $\vert X_{a}\vert=\vert Y_{\bd^d(a)}\vert\leqslant n_Y$, so the same bound shows the result for $X$. 

The proof in the multi-sorted case is essentially the same but needs a version of Lemma \ref{lem:envelop} where we also allow auxiliary-sort parameter variables. Although syntactically more tedious, the proof of such a lemma is however the same. 
\end{proof}

\subsection*{Acknowledgements} We would like to thank: Marcus Tressl for encouraging discussions and for sharing with us his proof strategy to show elimination of imaginaries in $\CODF$; Arno Fehm and Philip Dittmann for interesting discussions around henselian valued fields; Silvain Rideau for helpful comments on an earlier version of this paper; the Institute of Algebra of the Technische Universit\"at Dresden for its hospitality during a visit of the second author in May 2019; and l'\'Equipe de logique math\'ematique de l'IMJ-PRG for its hospitality during a visit of the first author in June 2021. The first author was partially funded by the ERC project TOSSIBERG (Grant Agreement 637027) and the Deutsche Forschungsgemeinschaft (DFG) - 404427454. Finally, we would like to point out that a previous version of the present work followed a slightly different approach to show the open core property which can also be of interest (see arXiv:1912.0791v1 [math.LO]).

\bibliographystyle{siam}
\bibliography{biblio}

\end{document}